\numberwithin{equation}{section}
\def\eps{\varepsilon }
\newcommand\R{\mathbb R}
\def\eps{\varepsilon}
\newcommand\br{\begin{remark}}
\newcommand\er{\end{remark}}
\newcommand\bp{\begin{pmatrix}}
\newcommand\ep{\end{pmatrix}}
\newcommand\be{\begin{equation}}
\newcommand\ee{\end{equation}}
\newcommand\ba{\begin{equation}\begin{aligned}}
\newcommand\ea{\end{aligned}\end{equation}}
\newcommand{\bap}{\begin{app}}
\newcommand{\eap}{\end{app}}
\newcommand{\begs}{\begin{exams}}
\newcommand{\eegs}{\end{exams}}
\newcommand{\beg}{\begin{example}}
\newcommand{\eeg}{\end{exaplem}}
\newcommand{\bpr}{\begin{proposition}}
\newcommand{\epr}{\end{proposition}}
\newcommand{\bt}{\begin{theorem}}
\newcommand{\et}{\end{theorem}}
\newcommand{\bc}{\begin{corollary}}
\newcommand{\ec}{\end{corollary}}
\newcommand{\bl}{\begin{lemma}}
\newcommand{\el}{\end{lemma}}
\newcommand{\brs}{\begin{remarks}}
\newcommand{\ers}{\end{remarks}}
\newcommand{\sign}{{\text{\rm sgn }}}
\newtheorem{theo}{Theorem}[section]
\newtheorem{exams}[theo]{Examples}
\numberwithin{equation}{section}
\newcommand{\sgn}{\text{\rm sgn}}
\newtheorem{theorem}{Theorem}[section]
\newtheorem{proposition}[theorem]{Proposition}
\newtheorem{corollary}[theorem]{Corollary}
\newtheorem{lemma}[theorem]{Lemma}
\newtheorem{example}[theorem]{Example}
\newtheorem{remark}[theorem]{Remark}
\newcommand{\RM}{\mathbb{R}}
\newcommand{\CM}{\mathbb{C}}
\newcommand{\NM}{\mathbb{N}}
\newcommand{\cn}{\operatorname{cn}}
\newcommand{\dn}{\operatorname{dn}}
\title{Spectral stability of periodic wave trains of the\\ 
Korteweg-de Vries/Kuramoto-Sivashinsky equation\\
in the Korteweg-de Vries limit}
\author{\sc \small
Mathew A. Johnson\thanks{Department of Mathematics, University of Kansas, Lawrence, KS 66045; matjohn@math.ku.edu.}
%Research of M.J. was partially supported by an NSF Postdoctoral Fellowship under NSF grant DMS-0902192}
~~~
Pascal Noble\thanks{Universit\'e de Lyon, Universit\'e Lyon I, Institut Camille Jordan, UMR CNRS 5208, 43 bd du 11 novembre 1918, F - 69622 Villeurbanne Cedex, France; noble@math.univ-lyon1.fr:
Research of P.N. is partially supported by the French ANR Project no.
ANR-09-JCJC-0103-01.}
~~~
L. Miguel Rodrigues\thanks{Universit\'e de Lyon, Universit\'e Lyon 1,
Institut Camille Jordan, UMR CNRS 5208, 43 bd du 11 novembre 1918,
F - 69622 Villeurbanne Cedex, France; rodrigues@math.univ-lyon1.fr:
Stay of M.R. in Bloomington was supported by French ANR project no. ANR-09-JCJC-0103-01
}
~~~
Kevin Zumbrun\thanks{Indiana University, Bloomington, IN 47405; kzumbrun@indiana.edu:
Research of K.Z. was partially supported under NSF grant no. DMS-0300487}
}
\begin{document}
\date{\today}
\maketitle

\begin{center}
{\bf 2010 MR Subject Classification}: 35B35, 35B10, 35Q53.
\end{center}

\begin{abstract}
We study the spectral stability of a family of periodic wave trains 
of the Korteweg-de Vries/Kuramoto-Sivashinsky equation
$ \partial_t v+v\partial_x v+\partial_x^3 v+\delta\left(\partial_x^2 v
+\partial_x^4 v\right)=0, $
$\delta>0$,
in the Korteweg-de Vries limit $\delta\to 0$,
a canonical limit describing small-amplitude weakly unstable thin film flow.
More precisely, we carry out a rigorous singular perturbation
analysis reducing the problem
to the evaluation for each Bloch parameter $\xi\in [0,2\pi]$
of certain elliptic integrals derived formally
(on an incomplete set of frequencies/Bloch parameters,
hence as necessary conditions for stability) 
and numerically evaluated by Bar and Nepomnyashchy \cite{BN},
thus obtaining, up to machine error, complete conclusions about stability.
The main technical difficulty is in treating the large-frequency and
small Bloch-parameter regimes not studied by Bar and Nepomnyashchy \cite{BN},
which requires
techniques rather different from classical Fenichel-type analysis.
The passage from small-$\delta$ to small-$\xi$
behavior is particularly interesting, using in an essential
way an analogy with hyperbolic relaxation
at the level of the Whitham modulation equations.
\end{abstract}
\newpage

\tableofcontents

\section{Introduction}\label{s:introduction}

%CHANGED-MJ12 throughout, changed wavetrain to wave train, wavenumber to wave number, etc..

\noindent
In this paper, we study the spectral stability of periodic wave trains of the
%CHANGED-KZ
%following Korteweg-de Vries/Kuramoto-Sivashinsky equation
Korteweg-de Vries/Kuramoto-Sivashinsky (KdV-KS) equation
%ENDCHANGED
\begin{equation}\label{kdv-ks}
\displaystyle
\partial_t u+u\partial_x u+\partial_x^3 u+\delta\big(\partial_x^2 u+\partial_x^4 u\big)=0,\quad \forall t>0,\forall x\in\R,
\end{equation}
\noindent
with  $0<\delta\ll 1$.
When $\delta=0$, equation \eqref{kdv-ks} reduces to the well-studied Korteweg-de Vries (KdV) equation, which is an example
of a completely integrable infinite dimensional Hamiltonian system.  As such, the KdV equation is solvable by the inverse
scattering transform, and serves as a canonical integrable equation in mathematical physics and applied mathematics describing
weakly nonlinear dynamics of long one-dimensional waves propagating in a dispersive medium.

%CHANGEDKZ12 (break up long par.)
When $\delta>0$ on the other hand, equation \eqref{kdv-ks}
%CHANGED-KZ this does not seem a good choice to me... let's just write it out.
%which we henceforth refer to as the Kuramoto-Sivashinsky (KS) equation, 
%ENDCHANGED
accounts for both dissipation and dispersion in the medium.
%CHANGEDKZ
%In particular, it is known to model
In particular, for $0<\delta\ll 1$ it is known to model
%motivating our study of this particular limit! (which is indeed well motivated
%by the physics... KZ)
%ENDCHANGED
a thin layer of viscous fluid flowing down an incline, in which case it can be derived either from the shallow water equations
$$
\displaystyle
\partial_t h+\partial_x (hu)=0,\quad \partial_t (hu)+\partial_x(hu^2+\frac{h^2}{2F^2})=h-u^2+\nu\partial_x(h\partial_x u).
$$
%CHANGED: added details:
%as $0<F-2\ll 1$ ($F$ being the Froude number)
%as $0<F-2\ll 1$
as $F\to 2^+$
($F$ being the Froude number, with $F=2$ the
critical value above which steady constant-height flows are unstable)
%ENDCHANGED
or from the full Navier-Stokes equations if $0<R-R_c\ll 1$ ($R_c$ being the critical Reynolds number above which steady Nusselt flows are unstable) in the small amplitude/large scale regime: see \cite{Wi,YY} for more details.
%CHANGEDKZ12 (removed- in fact this passage is not too relevant to the
%present paper and even diminishes the importance of our special limit...-KZ)
%
%%CHANGED-MJ
%%Equation \eqref{kdv-ks}
%The above Kuramoto-Sivashinsky equation also serves as a canonical model for pattern formation
%that has been used to describe, variously, plasma instabilities, flame front propagation,
%%CHANGED, guessing...
%ot turbulence in reaction-diffusion systems.\\
%or turbulence/transition to chaos in reaction-diffusion systems.
%%ENDCHANGED
For other values of $\delta$,
\eqref{kdv-ks} serves as a canonical model for pattern formation
that has been used to describe, variously, 
plasma instabilities, flame front propagation,
or turbulence/transition to chaos in reaction-diffusion systems
\cite{S1,S2,SM,K,KT}.

%CHANGED-KZ (used that phrase earlier..)
%In this paper, our goal is to analyze the spectral stability of periodic traveling wave solutions of \eqref{kdv-ks}
Here, our goal is to analyze the spectral stability of periodic traveling wave solutions of \eqref{kdv-ks}
with respect to small localized perturbations
in the singular limit  $\delta\to 0$.  In this limit the governing equation \eqref{kdv-ks} may be regarded as
a dissipative (singular) perturbation of the KdV equation, for which it is known that all periodic traveling waves
are spectrally stable to small localized perturbations; see \cite{BD,KSF,Sp}.  However, as the limiting KdV equation is 
%Sp
%time reversable 
time-reversible (Hamiltonian), 
%CHANGED added
this stability is of ``neutral'' (neither growing nor decaying) type,
and so
%ENDCHANGED
it is not immediately clear 
%if 
whether
the stability of these limiting
waves carries over to stability of ``nearby" waves in the
flow induced by \eqref{kdv-ks} for $|\delta|\ll 1$.  
%new passageKZ:
Indeed, we shall see that, for different parameters,
neutrally stable periodic KdV waves may perturb to {\it either}
stable or unstable periodic KdV-KS waves, depending on the results of a rather
delicate perturbation analysis.

Our analysis, mathematically speaking, falls in the context
of perturbed integrable systems, a topic of independent interest.
In this regard, it seems worthwhile to mention that the proof of stability
of limiting KdV waves, and in particular the explicit determination of
eigenvalues and eigenfunctions of associated linearized operators
on which the present analysis is based, is itself a substantial
problem that remained for a long time unsettled.
Indeed, by an odd oincidence, both the original proof of spectral
stability in \cite{KSF,Sp}
%appearing in the Russian literature, 
and a more recent proof of spectral and linearized stability in \cite{BD}
(see also the restricted nonlinear stability result \cite{DK})
were accompanied by claims appearing at about
the same time of instability of these waves, an example of history repeating
itself and indirect indication of the difficulty of this problem.

However, our motivations for studying this problem come very much
from the physical applications to thin-film flow, and particularly
the interesting metastability phenomena described in \cite{PSU,BJRZ,BJNRZ3}
%CHANGED-MJ12
%(see {\it Discussion and open problems}).
(see Section \ref{s:disc} below).
%ENDCHANGED
Interestingly, our resolution of the most difficult aspect of this problem,
the analysis of the small-Floquet number/small-$\delta$ regime,
is likewise motivated by the associated physics, in particular,
by the formal Whitham equations expected to govern long-wave perturbations
of KdV waves, and an extended relaxation-type system formally governing
the associated small-$\delta$ (KdV-KS) problem.

The identification of this structure, and the merging of integrable system
techniques with asymptotic ODE techniques introduced recently 
in, e.g., \cite{JZ2,PZ,HLZ,BHZ} (specifically, in our analysis
of frequencies $|\lambda|\in [C,C\delta^{-1}]$), 
we regard as interesting contributions
to the general theory that may be of use in related problems involving
perturbed integrable systems.
Our main contribution, though, is to the theory of thin-film flow, for
which the singular limit $\delta\to 0$
appears to be {\it the} canonical problem directing asymptotic behavior.
 
We begin by defining the notion of spectral stability 
of periodic waves of the $4$th-order parabolic system (KdV-KS),
following \cite{BJNRZ1}, as satisfaction of the following
collection of nondegeneracy and spectral assumptions:

\begin{itemize}
\item (H1) The map $H:\R^6\to \R^3$ taking $(X,b,c,q)\mapsto (u,u',u'')(X,b,c,q)-b$ is 
%CHANGED-MR : \{\}
full rank at $(\bar X,\bar b,\bar c, \bar q)\in H^{-1}(\{0\})$ where $(u,u',u'')(\cdot;b,c,q)$ is the solution of
$$
\delta(u'''+u')+u''+(u-c)u'=q,\quad (u,u',u'')(0;b,c,q)=b.
$$

%CHANGED-MR : \lambda\in\CM |
\item (D1) $\sigma_{L^2(\R)}(L)\subset\left\{\lambda\in\CM\,|\,\Re(\lambda)<0\right\}\cup\{0\}$, where
%CHANGED-MJ changed sign of L to make "negative" operator...
%\[
%L=-\delta\left(\partial_x^4+\partial_x^2\right)-\partial_x^3-\partial_x(u_0-c)
%\]
%\[
%L=\delta\left(\partial_x^4+\partial_x^2\right)+\partial_x^3+\partial_x(u_0-c)
%\]
%ENDCHANGED
%CHANGED-PN u_0 by u(.;\bar b,\bar c,\bar q)
\[
L=-\delta\left(\partial_x^4+\partial_x^2\right)-\partial_x^3-\partial_x\left(\bar u-\bar c\right)
\]
denotes the linearized operator obtained by linearizing \eqref{kdv-ks} about $\bar u=u(.;\bar b,\bar c,\bar q)$.
%CHANGED-MR : any
\item (D2) $\sigma_{L^2_{per}(0,\bar X)}(L_\xi)\subset\{\lambda\in\CM\,|\,\Re\lambda\leq -\theta|\xi|^2\}$ for some $\theta>0$ and any $\xi\in[-\pi/\bar X, \pi/\bar X)$ where
%CHANGED-MJ changed signs to make "negative" operator
%CHANGED-PN c to \bar c
\[
L_\xi[f]=-\delta\left((\partial_x+i\xi)^4f+(\partial_x+i\xi)^2f\right)-(\partial_x+i\xi)^3f-(\partial_x+i\xi)\left((\bar u-\bar c) f\right)
\]
%ENDCHANGED-PN
%$$
%\displaystyle
%L_\xi[f]=\delta\left(\partial_x+i\xi)^4f+(\partial_x+i\xi)^2f\right)+(\partial_x+i\xi)^3f+(\partial_x+i\xi)\left(u(.;\bar b,\bar c,\bar q)f-cf\right)
%$$
%ENDCHANGED
denotes the associated Bloch operator with Bloch-frequency $\xi$.
\item (D3) $\lambda=0$ is an eigenvalue of the Bloch operator$L_0$ of algebraic multiplicity two.
\end{itemize}
%\noindent

Under assumptions $(H1),(D3)$, standard spectral perturbation theory implies the existence of two
eigenvalues $\lambda_j(\xi)\in\sigma(L_\xi)$ bifurcating from the $(\lambda,\xi)=(0,0)$ state
%CHANGED-MJ12 below, changed \beta's to \alpha's to stay consistent with later terminology
%of the form $\lambda_j(\xi)=i\beta_j\xi+o(\xi)$.
of the form $\lambda_j(\xi)=i\alpha_j\xi+o(\xi)$.
%CHANGEDKZ12: not required but imposed..
%Assumption $(D1)$ ensures that $\beta_j\in\R$ and another non-degeneracy hypothesis is required:
Assumption $(D1)$ ensures that 
%$\beta_j\in\R$.
$\alpha_j\in\RM$.
%CHANGED-MJ12 expanded for clarification...
%We assume further:
To ensure analyticity in $\xi$ of the critical curves $\lambda_j$, we assume further:
%ENDCHANGED
\begin{itemize}
\item (H2) The coefficients 
%$\beta_j$
$\alpha_j$ are distinct.
\end{itemize}
%ENDCHANGED-MJ12

The above definition of spectral stability
is justified by the results of \cite{BJNRZ1}, which state that,
under assumptions (H1),(H2) and (D1)-(D3), 
the underlying wave $\bar u$
is $L^1(\RM)\cap H^4(\RM)\to L^\infty(\RM)$ nonlinearly stable;
moreover, if $\tilde{u}$ is any other solution
of \eqref{kdv-ks} with data sufficiently
close to $\bar u$ in $L^1(\RM)\cap H^4(\RM)$, for some appropriately prescribed
$\psi$,,
the modulated solution $\tilde{u}(\cdot-\psi(\cdot,t),t)$ converges
to $\bar u$ in $L^p(\RM)$,  $p\in[2,\infty]$.

%CHANGED added:
This is to be contrasted with the notion of spectral stability
of periodic waves of Hamiltonian systems, which, up to genericity
conditions analogous to (H1)-(H2) and (D3), amounts to the
condition that the associated linearized operator analogous to
$L$ have {\it purely imaginary spectrum}.
That is, in order that a (neutrally) stable periodic wave of (KdV)
perturb under small $\delta>0$ to a stable periodic wave of (KdV-KS),
its spectra must perturb from the imaginary axis into the 
stable (negative real part) complex half-plane

%TODO, remove later..
%CHANGEDKZ (the previous goes more in big KS paper, not appropriate here)-
%While numerics seem to indicate the existence of periodic traveling wave solutions of \eqref{kdv-ks} satisfying
%the above hypotheses (see discussion below), an analytical verification of these hypotheses seem beyond the scope
%of our methods except in certain asymptotic limits.  

%TODO: change:)
%Thus, one of the main goals of this paper is to analytically verify
The main goal of this paper, therefore
is to 
establish by rigorous singular perturbation theory 
{\it a simple numerical condition guaranteeing
the existence of periodic traveling wave solutions of \eqref{kdv-ks} satisfying the above hypotheses (H1)-(H2) and (D1)-(D3)
for sufficiently small $\delta>0$}:
more precisely, determining whether the neutrally stable periodic solutions
of (KdV) perturb for small $\delta>0$ to stable or to unstable solutions
of (KdV-KS).
%ENDCHANGED

\br\label{stabrmk}
The methods used in \cite{BJNRZ1} to treat the dissipative case
$\delta>0$,
based on linearized decay estimates and variation of constants,
are quite different from those typically used to show stability in
the Hamiltonian case.  The latter are typically based on
{\it Arnolds method}, which consists of finding sufficiently many
additional constants of motion, or ``Casimirs,'' 
that the relative Hamiltonian becomes positive
or negative definite subject to these constraints
(hence controlling the norm of perturbations); that is,
additional constants of motion are used to effectively ``excise''
unstable (stable) eigenmodes of the second variation of the Hamiltonian.
This approach is used in \cite{DK} to show stability with respect to
%CHANGED-MJ12 only true if X=1...
%$n$-periodic perturbations, for arbitrary $n$.
$nX$-periodic perturbations for arbitrary $n\in\NM$, where $X$ denotes the period
of the underlying KdV wave train.
%ENDCHANGED
However, as $L^2$ spectra in the periodic case is purely essential, 
such unstable (stable) eigenmodes are uncountably many, and so 
it is unclear how to carry out this approach for general $L^2$ perturbations.
Indeed, to our knowledge, the problem of stability of periodic KdV waves
with respect to general $H^s$ perturbations remains open.
\er

%CHANGED-MJ12 removed to keep spacing consistent
%\medskip 
%ENDCHANGED

The spectral stability of periodic wave-train solutions of \eqref{kdv-ks} itself has a long and interesting history
of numerical and formal investigations.
In \cite{CDK}, the authors studied numerically the spectral stability of periodic wave trains of
$$
\partial_t u+u\partial_x u+\gamma\partial_x^3 u+\partial_x^2 u+\partial_x^4 u=0,
$$
which is, up to a rescaling, equation \eqref{kdv-ks} with $\delta=\gamma^{-1}$ and showed the stabilizing effect of strong dispersion (large $\gamma$/small $\delta$s). As $\gamma$ is increased from $0$ to $8$ ($\delta\in(1/8, \infty)$), only one family of periodic waves of Kuramoto-Sivashinsky equation survives and its domain of stability becomes larger and larger and seems to ``converge'' to a finite range $(L_1, L_2)$ with $L_1=2\pi/0.74$ and $L_2\approx 2\pi/0.24$. In \cite{BN}, Bar and Nepomnyashchy studied formally the spectral stability of periodic wave trains of \eqref{kdv-ks} as $\delta\to 0$.
%CHANGED-MJ trying to clarify
%More precisely, the eigenvalues $\lambda(\xi,\delta)\in\sigma(L_\xi)$ are formally expanded as
More precisely, for a fixed Bloch wave number $\xi$ the associated eigenvalues $\lambda(\xi,\delta)\in\sigma(L_\xi)$ are formally
expanded as
%ENDCHANGED
\begin{equation}\label{formalexpand}
\lambda(\delta,\xi)=\lambda_0(\xi)+\delta\lambda_1(\xi)+O(\delta^2),
\end{equation}
where $\lambda_0(\xi)\in\RM i$ is an eigenvalue associated to the stability of periodic waves of KdV, known explicitly (see \cite{BD}),
and $\lambda_1(\xi)$ %
is described in terms of elliptic integrals.
Then, the authors verified numerically, using high-precision computations
in MATHEMATICA (see Appendix B, \cite{BN}),
that $\max_{\xi\in[-\pi/\bar X, \pi/\bar X)}\Re\left(\lambda_1(\xi)\right)<0$,
consistent with stability,
on the band of periods
%CHANGED-MJ2/28/12 changed period 2X to period X throughout....
%$2\bar X\in(L_1, L_2)$ 
$\bar X\in(L_1,L_2)$
%ENDCHANGED
with $L_1\approx 8.49$ and $L_2\approx 26.17$, which are approximately the bounds found in \cite{CDK}.
Similar bounds were found numerically  in \cite{BJNRZ1}
by completely different, direct Evans function, methods,
with excellent agreement to those of \cite{BN}.

However, the study of Bar and Nepomnyashchy
is only formal and in particular,
as mentioned in \cite{BN}, it is not valid in the neighbourhood of the origin $(\lambda,\xi)=(0,0)$.
%CHANGED-MJ added for clarity
In particular, 
%CHANGED added KZ
as we show in Section \ref{sec4},
the 
%formal expansion \eqref{formalexpand} remains
description \eqref{formalexpand} is
valid only for $0<\frac{\delta}{|\xi|}\ll 1$ hence,
for any given $\delta>0$ it is not possible to conclude 
%ADDED
from this expansion
spectral stability of an associated periodic
wave train of \eqref{kdv-ks}.
%CHANGED-MR : removed
% for sufficiently small $\xi$.
%ENDCHANGED-MR
%CHANGED-MJ12 no... this implies that the expansion does not hold if \lambda(0)\neq 0....
%(The formal derivation of this expansion in \cite{BN}
%is valid for $|\lambda|$, hence $|\xi|$, bounded from zero.
(The formal derivation of this expansion in \cite{BN} 
is valid for $|(\lambda,\xi)|_{\CM\times\RM}$ bounded from zero.)

Likewise, 
%CHANGED-MJ12 added so reader does not think [BJNRZ1] is a numerical paper...
%the study of \cite{BJNRZ1}, 
the numerical study in Section 2 of \cite{BJNRZ1}, 
%ENDCHANGED
which is not a singular perturbation
analysis, but rather a high-precision computation down to 
%CHANGEDnewKZ:
%very 
small
%but positive $\delta$, gives information about $\delta\to 0$ only
but positive $\delta$,\footnote{
Minimum value $\delta = .05$, as compared
to $\delta =.125$ in \cite{CDK}; see Table 3, \cite{BJNRZ1}.}
gives information about $\delta\to 0$ only
%ENDCHANGED
at finite scales, hence in effect omits an $O(\delta \times TOL)$ neighborhood
of the origin, where $TOL$ is the relative precision of the computation.
Thus, though very suggestive,
neither of these computations gives conclusive results about stability
in the $\delta \to 0$ limit, and, in particular,
{\it behavior on a $\delta$-neighborhood of the origin
is not (either formally or numerically) described.}
%ENDCHANGED

%CHANGED: added, for clarity-KZ:
In this paper, we both make rigorous 
%CHANGED added this importnat partKZ
the formal singular perturbation
analysis that was done in \cite{BN} 
and extend it to the frequency regimes that were
omitted in \cite{BN}, completing
the study of the spectrum at the origin and in the high-frequency regime.
More precisely, we carry out a rigorous singular perturbation analysis
reducing the problem to the study of Bloch parameters $\xi\in [C\delta, 2\pi]$ 
(see Section \ref{sec3} for definition of Bloch parameter)
and eigenvalues $|\Re \lambda|\le C\delta$,
$|\Im \lambda|\le C$, $C>0$ sufficiently large,
on which the computations of \cite{BN} may
be justified by standard Fenichel-type theory.

The exclusion of high frequencies is accomplished by a standard
parabolic energy estimate restricting $|\Im \lambda|\le C\delta^{-3/4}$
%(Section \ref{order3/4})
followed by a second energy estimate on a reduced ``slow,"
or ``KdV," block
restricting $|\Im \lambda|\le C$;
%CHANGED-MJ
%(Section \ref{orderone}).
see Lemma \ref{lemma1} and Proposition \ref{prop1} in Section \ref{compact} below.
For related singular perturbation
analyses using this technique of successive reduction
and estimation,
see for example \cite{MZ,PZ,Z,JZ2} and especially \cite{BHZ}, Section 4.
The treatment of small frequencies proceeds as usual by quite different
techniques involving rather the isolation of
``slow modes'' connected with formal modulation and large-time asymptotic
behavior.

At a technical level, this latter task appears quite daunting,
being a two-parameter
bifurcation problem emanating from a triple root $\lambda=0$
of the Evans function at $\xi=\delta=0$, where the Evans function
%TODO: This is still not a def. of the Evans fn. though!!!-KZ
%(rather it relates an undefined thing to a defined thing....DONE.
$E(\lambda, \xi,\delta)$ (defined in 
%\eqref{e:evans} below)
\eqref{evansdef} below)
%ENDTODO.
is an analytic function whose roots $\lambda$ for
fixed $\delta>0$ and 
%CHANGED-MR : range
%$\xi\in [0,2\pi]\subset \RM$
$\xi\in[-\pi/\bar X, \pi/\bar X)$
comprise the $L^2$-spectrum
of the linearized operator about the periodic solution.
However, using the special structure of the problem, we are able
to avoid the analysis of presumably complicated behavior
on the main ``transition regime''
%CHANGED-MR : (delta,xi)
$\Lambda:=\{(\delta,\xi)\ |\ C^{-1}\le |\xi|/\delta\le C\}$, $C\gg1$
and only examine the two limits $|\xi|/\delta\to 0, +\infty$ on which
the problem reduces to a pair of manageable {\it one-parameter bifurcation
problems} of familiar types.

Specifically, we show that (small) roots $\lambda$ of the Evans function cannot
cross the imaginary axis within $\Lambda$, so that stability properties
need only be assessed on the closure of $\Lambda$-complement, with
the results then propagating by continuity from the boundary
of $\Lambda$ to its interior.
This has the further implication that stability properties on the wedges
$|\xi|/\delta\ll 1$ and $|\xi|/\delta \gg1$
are linked (through $\Lambda$), and so it suffices to check stability
on the single wedge $|\xi|/\delta \gg 1$, where the analysis reduces
to computations carried out in
\cite{BN}.
Indeed, the situation is simpler still:
{\it stability on the entire
region $|\xi|, \delta\ll 1$ reduces by the above considerations to
validity of a certain ``subcharacteristic condition''} relating
characteristics of the Whitham modulation equations for KdV
(the limit $|\xi|/\delta \to \infty$)
and characteristics of a limiting reduced system as $\delta\to 0$
(the limit $|\xi|/\delta \to 0$).
%CHANGED-MR : removed, we plot them
%These conditions could in principle be checked a priori, but in
%practice are most easily checked through their equivalence to
%the Fenichel-type condition already computed in \cite{BN}.

As the above terminology suggests, there is a strong analogy in
the regime $|\xi|,\delta\ll 1$ to the
situation
of symmetric hyperbolic relaxation systems
and stability of constant solutions in the large time or small
relaxation parameter limit \cite{SK,Yo,Ze}, for which a similar
``noncrossing'' principle reduces the question of stability to
checking of Kawashima's genuine coupling condition, which in
simple cases reduces to the subcharacteristic condition that
characteristics of relaxation and relaxed system interlace.
%NOTE: $\Re \sigma(i\xi A+ B)$ for $A$, $B$ symmetric and $B\le 0$.
Indeed, at the level of Whitham modulation equations, the limit
as $|\xi|/\delta \to \infty$ may be expressed as a relaxation from
the Whitham modulation equations for KdV to the Whitham equations
for fixed $\delta$ in the limit as $\delta \to 0$, a relation
which illuminates both the role/meaning of the subcharacteristic condition
and the relation between KdV and perturbed systems at the level
of asymptotic behavior.
These issues, which we regard as some of the most interesting
and important observations of the paper,
are discussed in Section \ref{whitham}.

%CHANGED-PN: introduced Ind(X) to emphasize on the dependance on the period X, also changed L_1,L_2 to X_1,X_2 since L denotes a linear operator whereas Xs denote periods
The final outcome, and the main result of this paper, is that
{\it stability}- whether spectral, linear, or nonlinear-
{\it of periodic solutions of \eqref{kdv-ks} in the
KdV limit $\delta\to 0$ apart from the periods
%CHANGED-MJ2/28/12 changed 2\pi/X to \pi/X.... \xi\in[-2\pi/X,2\pi/X) would correspond to period X/2, not X.
%CHANGED-MR : -2\pi/X
$X_1\approx 8.49$ and $X_2\approx 26.17$ at which 
%${\rm Ind}(X)=\max_{-2\pi/X\le \xi\le 2\pi/X}\Re \lambda_1(\xi)$ 
${\rm Ind}(X)=\max_{-\pi/X\le \xi\le \pi/X}\Re \lambda_1(\xi)$
%ENDCHANGED
precisely vanishes,
is determined by $\hat \sigma(X):=\sgn\, {\rm Ind}(X)$,  }
or, equivalently, the values of the elliptic integrals derived by
Bar and Nepomnyashchy in \cite{BN}, with $\hat \sigma(X)<0$ corresponding
to stability.
%ENDCHANGED

%CHANGED: added this new subsection
\subsection{Discussion and open problems}\label{s:disc}

As we have emphasized above, our analysis, though convincing we feel,
does not constitute a numerical proof, but 
%CHANGED-KZ (more positive wording)
%only 
rather
%ENDCHANGED
a ``numerical
demonstration,'' in the sense that the computations of \cite{BN}
on which we ultimately rely for evaluation of $\hat \sigma(X)$ are carried out
with high precision and great numerical care, but not with
interval arithmetic in a manner yielding guaranteed accuracy.
However, there is no reason that such an analysis could not be carried out-
we point for example to the computations of \cite{M} in the related context
of stability of radial KdV-KS waves- and, given the fundamental nature of
the problem, this seems an important open problem for further investigation.

Indeed, numerical proof of stability or instability for arbitrary
nonzero values of $\delta$, verifying the numerical conclusions
of \cite{BJNRZ1}, or of Evans computations in general,
though considerably more involved, seems also feasible,
and another important direction for future investigations.

The particular limit $\delta \to 0$ studied here has special
importance, we find, as a canonical limit that serves (as discussed
at the beginning of the introduction) as an organizing
center for other situations/types of models as well, and
it has indeed been much studied; see, for example, \cite{EMR,BN,PSU},
and references therein.
As discussed in \cite{PSU,BJNRZ3,BJRZ}, it is also prototypical of
the interesting and somewhat surprising behavior of inclined thin
film flows that solutions often organize time-asymptotically into
arrays of ``near-solitary wave'' pulses, despite the fact that
individual solitary waves, since their endstates necessarily induce
unstable essential spectrum,\footnote{A straightforward Fourier
transform computation reveals that all constant solutions are unstable.}
are clearly {\it unstable}.

To pursue the analogy between modulational behavior and
solutions of hyperbolic-parabolic conservation or balance laws
that has emerged in \cite{OZ,Se,BJNRZ1,BJNRZ2},
etc., and, indeed, through the earlier studies of \cite{FST} or
the still earlier work of Whitham \cite{W},
we feel that the KdV limit $\delta\to 0$ of \eqref{kdv-ks}
plays a role for small-amplitude periodic inclined thin film flow analogous
to that played by Burgers equation for small-amplitude shock waves
of general systems of hyperbolic--parabolic conservation laws,
and the current analysis a role analogous to that of Goodman's analysis
in \cite{Go1,Go2} of spectral stability of general small-amplitude
%CHANGEDnewKZ:
shock waves by singular perturbation of Burgers shocks.\footnote{
See also the related \cite{PZ,FreS}, 
more in the spirit of the present analysis.}
%ENDCHANGED

%CHANGEDnewKZ (should we keep this or delete? Please decide):
The difference from the shock wave case is that, whereas, up to Galilean
and scaling invariances, the Burgers shock profile is unique, there exists up
to invariances a one-parameter family of periodic waves of KdV, indexed by
the period $X$, of which only a certain range are stable.  Moreover, whereas
the Burgers shock profile is described by a simple $\tanh$ function, 
periodic KdV waves are described by a more involved parametrization
involving elliptic functions.  Thus, the study of periodic waves is inherently
more complicated, simply by virtue of the number of cases that must be considered, and the complexity of the waves involved.
Indeed, in contrast to the essentially geometric proof of Goodman for shock waves, we here find it necessary to use in essential ways certain exact computations
coming from the integrability/inverse scattering 
formalism of the underlying KdV equation. 
%ENDCHANGED

\medskip

%CHANGED: to separate from the above, gave this a heading...-KZ
{\bf Plan of the paper}.
%The paper is organized as follows: in section
In Section
%
%ENDCHANGED
\ref{sec2}, we compute an expansion of periodic waves of KdV-KS in the limit $\delta\to 0$ by 
using Fenichel singular perturbation theory. In Section \ref{sec3}, we justify rigorously the 
formal spectral analysis in \cite{BN}: 
we provide, first, a priori estimates on the size of unstable eigenvalue and show  that they 
are necessarily of order $O(1)$ as $\delta\to 0$. Then we compute an expansion of both the Evans 
function  and eigenvalues with respect to $\delta$.  This analysis holds true except in a 
neighborhood of the origin from which spectral curves bifurcate. In Section \ref{sec4}, we 
compute the spectral curves in the neighborhood of the origin and show that spectral 
stability is related to subcharacteristic conditions for a Whitham's modulation system of relaxation type.

\section{\label{sec2}Expansion of periodic traveling-waves in the KdV limit}

%CHANGED-MJ
For $0<\delta\ll 1$,
%In the limit $\delta\to 0$,
%ENDCHANGED
equation \eqref{kdv-ks} is a singular perturbation of the Korteweg-de Vries equation
\begin{equation}\label{kdv}
\partial_t u+u\partial_x u+\partial_x^3 u=0,
\end{equation}
where the periodic traveling wave solutions may be described with the help of the Jacobi elliptic functions.
%CHANGED-mj
In \cite{EMR}, periodic traveling wave solutions of \eqref{kdv-ks} are proved to be $\delta$-close to periodic
traveling wave solutions of \eqref{kdv} and, furthermore, an expansion of these solutions with respect to $\delta$ is found.
We begin our analysis by briefly recalling the details of this expansion.
% As $\delta\to 0$, periodic traveling waves solutions of \eqref{kdv-ks} are proved to be $\delta$-close to periodic solutions of (KdV) and an expansion with respect to $\delta$ is found \cite{EMR}: we briefly recall this analysis here.
%ENDCHANGED
%CHANGED-MJ
%Let us search for traveling wave solutions of \eqref{kdv-ks} with $u(x,t)=U(x-ct)$. This yields the equation
Notice that \eqref{kdv-ks} admits traveling wave solutions of the form $u(x,t)=U(x-ct)$ provided the profile
$U$ satisfies the equation
%ENDCHANGED
$$
\displaystyle
(U-c)U'+U'''+\delta(U''+U'''')=0,
$$
where here $'$ denotes differentiation with respect to the traveling variable $\omega=x-ct$.
%CHANGED-MJ
%We integrate this equation once:
Due to the conservative nature of \eqref{kdv-ks} this profile equation may be integrated once yielding
%ENDCHANGED
\begin{equation}\label{eq:pr}
\displaystyle
\frac{U^2}{2}-cU+U''+\delta(U'+U''')=q,
\end{equation}
where $q\in\RM$ is a constant of integration.  
%CHANGED-P.N: removed the following sentence: it is mentioned thereafter
%In particular, notice that setting $\delta=0$ in \eqref{eq:pr} yields the profile equation for %traveling wave solutions of the KdV equation \eqref{kdv}.
By introducing $x=U, y=U'$ and $z=U''+U$, we may write \eqref{eq:pr} as the equivalent first order system
\begin{equation}
\label{l1}
\displaystyle
x'=y,\quad y'=z-x,\quad \delta z'=-z+q+(c+1)x-\frac{x^2}{2}.
\end{equation}
\noindent
%CHANGED-MJ
%%Setting $\delta=0$ in \eqref{l1} yields the slow system:
%\begin{equation}
%\displaystyle
%z=q+(c+1)x-\frac{x^2}{2},\quad x'=y,\quad y'=q+cx-\frac{x^2}{2}
%\end{equation}
%As a consequence, if $\delta=0$, the system (\ref{l1}) is reduced to a planar system which is integrable: indeed it is the equation of profile for the traveling waves solutions of (KdV).
Setting $\delta=0$ in \eqref{l1} yields the slow system
\begin{equation}
\displaystyle
z=q+(c+1)x-\frac{x^2}{2},\quad x'=y,\quad y'=q+cx-\frac{x^2}{2},
\end{equation}
which is equivalent to the planar, integrable system governing the traveling wave profiles for the KdV equation \eqref{kdv}.
%ENDCHANGED
%CHANGED-MJ
%We can justify this reduction when $0<\delta\ll 1$ by using Fenichel's theorems.
Utilizing the well-known Fenichel theorems, we are able to justify the reduction and continue
the resulting KdV profiles for $0<\delta\ll 1$.
%ENDCHANGED
To this end, we define
%CHANGED-MR : \middle| + :
$$
\displaystyle
M_0=\left\{(x,y,z)\in\mathbb{R}^3\ \middle|\ z=q+(c+1)x-\frac{x^2}{2}=:f_0(x)\right\},
$$
and recognize this as the slow manifold associated to (\ref{l1}).
%CHANGED-MJ
%This manifold is normally hyperbolic attractive and Fenichel's theorems do apply:
It is readily checked that this manifold is normally hyperbolic attractive, and so a standard
application of the 
%CHANGED-MR : missing F
Fenichel theorems yields the following proposition.
%ENDCHANGED

\begin{proposition}
%CHANGED-MJ
For $\delta>0$ sufficiently small, there
%There
%ENDCHANGED
exists a slow manifold $M_{\delta}$ invariant under the flow of (\ref{l1}) that is written as
$$
\begin{array}{lll}
\displaystyle
%CHANGED-MR : 
M_{\delta}=\left\{(x,y,z)\in\mathbb{R}^3\ \middle|\ z=f_{\delta}(x,y)\right\},\quad f_{\delta}(x,y)=f_0(x)+\delta f_1(x,y)+\delta^2 f_2(x,y)+O(\delta^3),\\
\displaystyle
f_1(x,y)=xy-(c+1)y,\quad f_2(x,y)=-y^2+(c+1-x)(q+cx-\frac{x^2}{2}).
\end{array}
$$
\end{proposition}
\noindent
The expansion of $f_{\delta}$ is obtained by inserting $z=f_{\delta}(x,y)$ into (\ref{l1}) and identifying the powers in $\delta$. Then by plugging this expansion into (\ref{l1})$_2$, one finds the reduced {\it planar} system:
\begin{equation}\label{plan_sys}
\displaystyle
x'=y,\quad y'=q+cx-\frac{x^2}{2}+\delta(xy-(c+1)y)+O(\delta^2),
\end{equation}
\noindent
or equivalently the scalar equation
\begin{equation}\label{plan_eq}
 \displaystyle
 x''=q+cx-\frac{x^2}{2}+\delta (x-c-1)x'+O(\delta^2).
\end{equation}

%CHANGED-MJ changed all arguments of functions from \xi to \omega (to avoid confusion w/ Bloch wavenumber)
%CHANGED-MJ
%Now we search for periodic solutions of \eqref{plan_sys}.
Now, we seek an asymptotic expansion of the solutions of \eqref{plan_sys} in the limit $\delta\to 0$.
An easy way of doing these computations to any order with respect to $\delta$ is to follow the formal computations in \cite{BN},
which are now justified here with Fenichel's theorems.
%ENDCHANGED
To begin, notice that when $\delta=0$ the periodic solutions $x=x_0$ with wave speed $c=c_0$ of \eqref{plan_eq} agree with those of the
KdV equation \eqref{kdv}, which are given explicitly by % associated to $c=c_0$ are given explicitly by
\begin{equation}\label{kdvsoln}
x_0(\omega;\phi,\kappa,k,u_0)=u_0+12 k^2\kappa^2 \cn^2\big(\kappa(\omega+\phi),k\big),\quad c_0=u_0+8\kappa^2k^2-4\kappa^2,
\end{equation}
where $\cn(\cdot,k)$ is the Jacobi elliptic cosine function
with elliptic modulus $k\in[0, 1)$ and $\kappa>0,u_0,\phi$ are arbitrary real constants related to the
Lie point symmetries of \eqref{kdv}; see \cite{BD}.
Thus, the set of periodic traveling wave solutions of \eqref{kdv} forms a four dimensional manifold ($3$ dimensional up to translations)
parameterized by $u_0$, $\kappa$, $k$, and $\phi$.
Note that such solutions are $2K(k)/\kappa$ periodic, where $K(k)$ is the complete elliptic integral of the first kind.

\begin{remark}
The parameterization of the periodic traveling wave solutions of the KdV equation given in \eqref{kdvsoln} is consistent
with the calculations in \cite{BD} where the authors verify the spectral stability of such solutions to localized perturbations
using the complete integrability of the governing equation.  However, this parameterization is not the same as that given
in \cite{BN}, whose numerical results our analysis ultimately relies on.  Indeed, in \cite{BN} the periodic traveling
wave solutions of \eqref{kdv} are given (up to rescaling\footnote{In \cite{BN}, the authors consider the KdV equation in
the form $\tilde u_t+6\tilde u \tilde u_x+\tilde u_{xxx}=0$,
which is equivalent to \eqref{kdv} via the simple rescaling $\tilde u\mapsto \frac{1}{6}u$.}) as
\[
X_{\textrm{BN}}(\omega;\omega_0,q,k)=\frac{q^2K(k)^2}{3\pi^2}\left(\dn^2\left(\frac{(\omega-\omega_0) qK(k)}{\pi},k\right)-\frac{E(k)}{K(k)}\right),
\]
where $\dn(\cdot,k)$ denotes the Jacobi dnoidal function with elliptic modulus $k\in[0,1)$, and $K(k)$ and $E(k)$ denote
the complete elliptic integrals of the first and second kind, respectively.  Nevertheless, using the identity
\[
k^2\cn^2(x,k)=\dn^2(x,k)-(1-k^2)
\]
we can rewrite \eqref{kdvsoln} as
\[
x_0(\omega)=12\kappa^2\left(\dn^2(\kappa(\omega+\phi),k)+\frac{u_0}{12\kappa^2}-(1-k^2)\right),
\]
which, upon setting $\kappa=\frac{qK(k)}{\pi}$, $\phi=-\omega_0$, and choosing $u_0$ so that
\[
\frac{u_0}{12\kappa^2}-(1-k^2)=-\frac{E(k)}{K(k)},
\]
we see that $x_0(\omega)=X_{\textrm{BN}}(\omega)$.
Thus, there is no loss of generality in choosing one parameterization over
the other.  Furthermore, the numerical results of \cite{BN} carry over directly to the cnoidal wave parameterization
chosen here.
\end{remark}

Next, we consider the case $0<\delta\ll 1$.  To begin we seek conditions guaranteeing that periodic traveling wave solutions
of \eqref{kdv-ks} exist for sufficiently small $\delta$.  
%CHANGED KZ, grammar
%To this end, after multiplying both
%sides by $x'$, equation \eqref{plan_eq} may be written as
Multiplying both sides by $x'$ and rearranging, 
we find that equation \eqref{plan_eq} may be written as
\begin{equation}
\displaystyle
\frac{d}{d\omega}\left(\frac{(x')^2}{2}+\frac{x^3}{6}-c\frac{x^2}{2}-qx\right)=\delta\big(x(x')^2-(c+1)(x')^2\big)+O(\delta^2),
\end{equation}
hence a
%Then a
necessary condition for the existence of a $L$-periodic solution to \eqref{plan_eq} is
%CHANGED-MR : (...)
\begin{equation}\label{selec0}
\displaystyle
\int_0^L \left(x_0(\omega)(x_0'(\omega))^2-(c+1)(x_0'(\omega))^2\right)d\omega=0.
\end{equation}
By a straightforward computation using integration by parts and \eqref{plan_eq}, \eqref{selec0} can be simplified to the selection principle
\begin{equation}\label{selec}
\displaystyle
\int_0^L (x_0''(\omega))^2d\xi=\int_0^L (x_0'(\omega))^2d\xi,
\end{equation}
or, equivalently,
%CHANGED-MR : :
\begin{equation}\label{selecf}
\displaystyle
\kappa^2=\frac{\displaystyle\int_0^{2K(k)}[(cn^2)']^2(y)dy}{\displaystyle\int_0^{2K(k)}[(cn^2)'']^2(y)dy}=:F^2(k).
\end{equation}
\noindent
Using the implicit function theorem, one can show that if \eqref{selecf} is satisfied, there exists a periodic solution $x^{\delta}$ of \eqref{plan_eq} which is $\delta$ close to $x_0$. As a result, we obtain a $3$-dimensional set of periodic solutions to (\ref{kdv-ks}) parametrized by $u_0,\phi$ and either $k$ or $\kappa$. Note that the limit $\kappa\to 0$ (i.e. $k\to 1$) corresponds to a solitary wave and $\kappa\to 1$ (i.e. $k\to 0$) corresponds to small amplitude solutions (or equivalently  to the onset of the Hopf bifurcation branch).

The above observations lead us to the following proposition.
%We have then proved the following proposition.

%CHANGED-MR : quote EMR
\begin{proposition}[\cite{EMR}]\label{p:kdvsolnexpand}
As $\delta\to 0$, the periodic traveling waves $U(\omega),\;\omega=x-ct$, solutions of \eqref{kdv-ks} expand (up to translations) as
\begin{equation}\label{kdvsolnexpand}
\left\{\begin{aligned}
U(\omega)&=U_0(\kappa \omega,u_0,k,\kappa)+\delta U_1(\omega)+\delta^2 U_2(\omega)+O(\delta^3),\\
c&=c_0(u_0,k,\kappa)+\delta^2 c_2+O(\delta^3),
\end{aligned}\right.
\end{equation}
where $U_0,c_0$ are defined as
$$
\displaystyle
U_0(y,u_0,k, \kappa)=u_0+3k\left(\frac{\kappa K(k)}{\pi}\right)^2\cn^2\left(\frac{ K(k)}{\pi}y,k\right),\quad c_0=u_0+(2k-1)\left(\frac{\kappa K(k)}{\pi}\right)^2,
$$
and $\kappa$
%CHANGED-MJ
is determined from $k$ via the
%satisfies the
%ENDCHANGE
selection principle $\kappa=\mathcal{G}(k)$ with
$$
\displaystyle
\left(\frac{K(k)\mathcal{G}(k)}{\pi}\right)^2=
   \frac{7}{20}\frac{2(k^4-k^2+1)E(k)-(1-k^2)(2-k^2)K(k)}{(-2+3k^2+3k^4-2k^6)E(k)+(k^6+k^4-4k^2+2)K(k)}.
$$
Moreover the functions $(U_i)_{i=1,2}$ are (respectively odd and even) solutions of the linear equations
\[
\mathcal{L}_0[U_1]+\kappa U_0''+\kappa^3 U_0''''=0,\quad
\mathcal{L}_0[U_2]+\left(\frac{U_1^2}{2}-c_2 U_0\right)'+\kappa U_1''+\kappa^3 U_1''''=0,
\]
where $\mathcal{L}_0:=\kappa^2 \partial_x^3+\partial_x\left((U_0-c_0).\right)$ is a closed linear operator acting on $L^2_{\rm per}(0,2K(k))$
with densely defined domain $H^3_{\rm per}(0,2K(k))$.
\end{proposition}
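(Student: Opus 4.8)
The plan is to construct the expansion \eqref{kdvsolnexpand} by a Poincar\'e--Lindstedt procedure applied to the reduced planar profile equation \eqref{plan_eq}, whose validity as a description of \emph{all} small-$\delta$ periodic profiles has already been secured by the Fenichel reduction of the preceding Proposition. First I would pass to a fixed-period coordinate, rescaling $\omega$ by $\kappa$ so that every corrector lives on the $\delta$-independent torus of circumference $2K(k)$, and insert the ansatz $U=U_0+\delta U_1+\delta^2 U_2+O(\delta^3)$, $c=c_0+\delta c_1+\delta^2 c_2+O(\delta^3)$ into the once-differentiated profile equation $U'''+(U-c)U'+\delta(U''+U'''')=0$ obtained by differentiating \eqref{eq:pr}. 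At order $\delta^0$ one recovers the KdV traveling-wave equation $U_0'''+(U_0-c_0)U_0'=0$, whose periodic solutions are exactly the cnoidal waves \eqref{kdvsoln}; this fixes $U_0$ and $c_0$ and is where the explicit cnoidal parametrization of \cite{BD} enters. At each higher order $n\ge 1$ the equation takes the form $\mathcal{L}_0 U_n=R_n$, where $\mathcal{L}_0$ is precisely the linearization of the profile map about $U_0$ displayed in the statement and $R_n$ is assembled from the dissipative forcing $\kappa U_{n-1}''+\kappa^3 U_{n-1}''''$, the quadratic terms, and the speed corrections $c_j$.

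The heart of the argument is then a Fredholm solvability analysis of $\mathcal{L}_0 U_n=R_n$ on $L^2_{\mathrm{per}}(0,2K(k))$. The key observation is the factorization $\mathcal{L}_0=\partial_x\mathcal{M}$ with $\mathcal{M}:=\kappa^2\partial_x^2+(U_0-c_0)$ a self-adjoint Hill (Lam\'e-type) operator, whence $\mathcal{L}_0^{*}=-\mathcal{M}\partial_x$. Differentiating the order-$\delta^0$ equation gives $\mathcal{M}U_0'=0$, so $U_0'\in\ker\mathcal{L}_0$, and, assuming the periodic null space of $\mathcal{M}$ is exactly $\langle U_0'\rangle$, one gets $\ker\mathcal{L}_0=\langle U_0'\rangle$ and $\ker\mathcal{L}_0^{*}=\langle 1,\,U_0\rangle$. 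Solvability therefore reduces to the two scalar conditions $\int_0^{2K(k)} R_n\,dx=0$ and $\int_0^{2K(k)} U_0\,R_n\,dx=0$. The first is automatic, since the whole profile equation is a total $x$-derivative and hence so is each $R_n$. Carrying out the second at order $\delta^1$, where $R_1=c_1U_0'-(\kappa U_0''+\kappa^3U_0'''')$, and using $\int U_0U_0'=0$, $\int U_0U_0''=-\int(U_0')^2$, $\int U_0U_0''''=\int(U_0'')^2$, collapses the condition to $\kappa^2=\int(U_0')^2/\int(U_0'')^2$, which is exactly the selection principle \eqref{selec}--\eqref{selecf}; evaluating these integrals against the explicit $\cn^2$ profile by standard elliptic-integral reductions yields the closed form $\kappa=\mathcal{G}(k)$. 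The analogous condition at order $\delta^2$ then determines $c_2$.

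A parity bookkeeping runs alongside and supplies the vanishing of the odd-order speed corrections. Since $U_0$ is even and $\mathcal{L}_0$ interchanges the even and odd subspaces, the forcing $R_1$ must be even in order that $U_1$ be odd; as $U_0'$ is odd, this forces $c_1=0$, so that $U_1$ is odd, and by induction $U_n$ inherits parity $(-1)^n$ while all odd-index $c_j$ vanish, giving $c=c_0+\delta^2c_2+O(\delta^3)$ as claimed. Once the solvability conditions hold at each order, existence of a genuine nearby periodic orbit $U^\delta=U_0+O(\delta)$ (rather than a merely formal series) follows from the implicit function theorem applied to the return map of the reduced planar system, as already indicated after \eqref{selecf}, the expansion coefficients then matching the formal ones by uniqueness.

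The step I expect to be the main obstacle is establishing that the periodic null space of the Hill operator $\mathcal{M}$ is \emph{exactly} one-dimensional, spanned by $U_0'$ --- equivalently, that $\ker\mathcal{L}_0^{*}$ is precisely $\langle 1,U_0\rangle$ and no larger. This non-degeneracy is what guarantees the Fredholm alternative produces exactly the two solvability conditions above, and hence that the selection principle is the \emph{only} constraint imposed at order $\delta$; for a general periodic potential a second periodic null solution (a resonance) could appear and break the count. Here it must be read off from the specific Lam\'e structure of $\mathcal{M}$ inherited from the cnoidal profile, i.e.\ from the integrable spectral theory of the KdV traveling-wave equation, the second solution of $\mathcal{M}v=0$ being linearly growing and hence non-periodic. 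The remaining labor --- the explicit elliptic-integral evaluations producing $\mathcal{G}(k)$ and $c_2$ --- is routine but tedious.
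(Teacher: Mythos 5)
Your overall strategy coincides with the paper's: after the Fenichel reduction, rescale to the fixed period $2K(k)$, expand $U$ and $c$ in powers of $\delta$, and resolve each order by the Fredholm alternative for $\mathcal{L}_0$ with ${\rm Ker}\,\mathcal{L}_0^*={\rm span}\{1,U_0\}$, the $\langle U_0,\cdot\rangle$ condition at order $\delta$ producing exactly the selection principle \eqref{selec}--\eqref{selecf} and the implicit function theorem upgrading the formal series to a genuine solution. Your factorization $\mathcal{L}_0=\partial_x\mathcal{M}$ and the identification of the simplicity of the periodic kernel of the Lam\'e operator $\mathcal{M}$ as the key non-degeneracy is a nice way to make explicit what the paper simply cites from \cite{BrJ,JZB}. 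Two points, however, need repair.

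First, your kernel count is internally inconsistent: you claim ${\rm Ker}\,\mathcal{L}_0=\langle U_0'\rangle$ while ${\rm Ker}\,\mathcal{L}_0^*=\langle 1,U_0\rangle$, which is impossible for a Fredholm operator of index $0$. From your own factorization, $\mathcal{L}_0 v=0$ means $\mathcal{M}v={\rm const}$, not $\mathcal{M}v=0$; since $\langle 1,U_0'\rangle=0$, the equation $\mathcal{M}v=1$ is periodically solvable and ${\rm Ker}\,\mathcal{L}_0$ is two-dimensional, spanned by $U_0'$ and a second element $v_2$ (exactly as recorded in Section \ref{s:evexpand} of the paper). This does not damage the solvability analysis, which only uses the cokernel, but it does affect the normalization of the correctors $U_n$ and should be stated correctly. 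Second, and more seriously, your derivation of $c_1=0$ is circular: you argue that ``$R_1$ must be even in order that $U_1$ be odd,'' but the oddness of $U_1$ is part of the conclusion, not a hypothesis; since $\langle c_1U_0',1\rangle=\langle c_1U_0',U_0\rangle=0$, the term $c_1U_0'$ lies in ${\rm Range}\,\mathcal{L}_0$ for \emph{every} $c_1$, so the order-$\delta$ solvability conditions leave $c_1$ entirely undetermined. The paper pins $c_1=0$ down from the solvability condition at order $\delta^2$ (deferring the computation to \cite{EMR}). Your parity idea can be salvaged, but only by invoking the actual symmetry of the problem: the reflection $\omega\mapsto-\omega$ maps solutions of the profile equation with parameter $\delta$ to solutions with parameter $-\delta$ and the same speed, so local uniqueness of the branch forces $c(\delta)=c(-\delta)$, hence $c_1=0$ and, after fixing the phase, the alternating parity of the $U_n$. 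As written, that step is a genuine gap.
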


\begin{proof}
The explicit expansions above are determined as follows.  After rescaling, continuing the $2K(k)/\kappa$-periodic wave trains of \eqref{kdv}
to $0<\delta\ll 1$ is equivalent to searching for $2K(k)$-periodic solutions of
%
%In order to determine the spectral stability of periodic traveling waves, we will need an expansion of periodic traveling waves solutions of (\ref{kdv-ks}). An easy way of doing these computations to any order with respect to $\delta$ is to follow the formal computations in \cite{BN}, which are now justified here with Fenichel's theorems. More precisely, we search for $2K(k)$-periodic functions $U$, solutions of
\begin{equation}\label{eq:prk}
\displaystyle
(U-c)U'+\kappa^2 U'''+\delta\Big(\kappa U''+\kappa^3U''''\Big)=0
\end{equation}
for $\delta>0$ sufficiently small.
We expand $c$, $U$  in the limit $\delta\to 0$ as
$$
\displaystyle
c=c_0+\delta c_1+O(\delta^2),\quad U=U_0(\xi)+\delta U_1(\xi)+O(\delta^2).
$$
\noindent
with $U_0(\omega)=x_0(\omega,\kappa,k,u_0)$ as defined in \eqref{kdvsoln}.  
Notice that, up to order
%CHANGED-MJ
%$O(\delta)$,
$O(1)$,
%ENDCHANGED
equation \eqref{eq:prk} is satisfied for {\it all} $u_0,k,\kappa$, i.e. there is no selection of a particular wave train.
%CHANGED-MJ
%Let us now identify $O(\delta)$ terms into \eqref{eq:prk}:
Now, identifying the $O(\delta)$ terms in \eqref{eq:prk} yields the equation
\begin{equation}\label{dl1}
\displaystyle
\kappa^2 U_1'''+\big((U_0-c_0)U_1\big)'-c_1U_0'+\kappa U_0''+\kappa^3 U_0'''' = 0 .
\end{equation}
The linear operator $\displaystyle \mathcal{L}_0[x]=\kappa^2 x'''+\left((U_0-c_0)x\right)'$, defined on $H_{per}^3(0, 2K(k))$, is Fredholm of index $0$ and $(1, U_0)$ span the kernel of its adjoint (see \cite{BrJ,JZB} for more details). Then one can readily deduce that equation \eqref{dl1} has a periodic solution provided that the following compatibility condition is satisfied, $\displaystyle \langle(U'_0)^2\rangle=\kappa^2\langle(U''_0)^2\rangle$ which is precisely the selection criterion \eqref{selecf}. In order to determine $c_1$, one has to consider higher order corrections to $x_0$: in fact,  $c_1$ is determined through a solvability condition on the equation for $x_2$. This yields $c_1=0$ (see \cite{EMR} for more details).
\end{proof}

As a consequence,  we have obtained a two dimensional manifold of (asymptotic) periodic solutions (identified when coinciding up to translation) para\-metrized by $u_0\in\mathbb{R}$ and wave number $\kappa$ (or alternatively the parameter $k\in[0, 1)$). Note that the limit $\kappa\to 0$ (i.e. $k\to 1$) corresponds to a solitary wave and $\kappa\to 1$ (i.e. $k\to 0$) corresponds to small amplitude solutions.

\section{\label{sec3} Stability with respect to high
frequency perturbations }

In this section, we begin our study of the spectral stability of periodic traveling waves of \eqref{kdv-ks} in the limit
$\delta\to 0^+$.  
Denote by $U(\cdot)$ such a  
%CHANGED-MJ2/28/12 changed 2X to X
%$2X$-periodic 
$X$-periodic
%ENDCHANGEDtraveling wave solution of \eqref{kdv-ks} with wave speed $c$,
where
for notational convenience, we have dropped the $\delta$ dependence of both $U$ and $c$.
Linearizing \eqref{kdv-ks} about $U$ in the co-moving frame $(x-ct,t)=(\omega,t)$ leads to the linear evolution equation
\[
\partial_t v-Lv=0
\]
governing the perturbation $v$ of $U$, where $L$ denotes the differential operator with 
%CHANGED-MJ2/28/12 changed 2X to X
%$2X$-{\it periodic}
$X$-{\it periodic}
%ENDCHANGED
coefficients
\[
Lv=-\big((U-c)v\big)'-v'''-\delta(v''+v'''').
\]

In the literature, there are many choices for the  class of perturbations considered, each of which corresponds
to a different domain for the above linear operator.  Here, we are interested in perturbations of $U$ which
are {\it spatially localized}, hence we require that $v(\cdot,t)\in L^2(\RM)$ for each $t\geq 0$.
Seeking separated solutions of the form $v(x,t)=e^{\lambda t}v(x)$ then leads to the spectral ODE problem
\begin{equation}\label{sppb}
Lv=\lambda v,
%CHANGED: needs to be outside for later refs!-KZ
%\quad v\in L^2(\RM).
\end{equation}
$v\in L^2(\RM)$.
%ENDCHANGED

To characterize the spectrum of the operator $L$, considered here as a densely
defined operator on $L^2(\RM)$, we note that as the coefficients
of $L$ are 
%CHANGED-MJ2/28/12 changed 2X to X
%$2X$-periodic
$X$-periodic
%ENDCHANGED
functions of $x$, Floquet theory implies that the
spectrum of $L$ %, considered here as a densely defined operator on $L^2(\RM)$,
is purely continuous and that $\lambda\in\sigma(L)$ if and only if the spectral problem
\eqref{sppb} has an $L^\infty(\RM)$ eigenfunction of the form
\be\label{e:ansatz}
v(x;\lambda,\xi)=e^{i\xi x}w(x;\lambda,\xi)
\ee
for some $\xi\in[-\pi/X,\pi/X)$ and 
%CHANGED-MJ2/28/12 changed 2X to X
%$w(\cdot) \in L^2_{\rm per}(-X,X)$.
$w(\cdot)\in L^2_{\rm per}(0,X)$
%ENDCHANGED
Following \cite{G,S1,S2}, we find that substituting the ansatz \eqref{e:ansatz}
into \eqref{sppb} leads one to consider the one-parameter family of Bloch operators
$\{L_\xi\}_{\xi\in[-\pi/X,\pi/X)}$ acting on 
%CHANGED-MJ2/28/12 changed 2X to X
%$L^2_{\rm per}([-X,X])$
$L^2_{\rm per}([0,X])$
%ENDCHANGED
via
\be\label{e:Lxi}
\left(L_\xi w\right)(x):=e^{-i\xi x}L\Big[e^{i\xi \cdot}w(\cdot)\Big](x).
\ee
Since the Bloch operators have compactly embedded domains 
%CHANGED-MJ2/28/12 changed 2X to X
%$H^4_{\rm per}([-X, X])$ in $L^2_{\rm per}([-X,X])$,
$H^4_{\rm per}([0,X])$ in $L^2_{\rm per}([0,X])$,
%ENDCHANGED
their spectrum consists entirely of discrete eigenvalues which, furthermore, depend continuously on the
Bloch parameter $\xi$.  It follows 
by these standard considerations that
%CHANGED-MJ2/28/12 changed 2X to X
\[
\sigma_{L^2(\RM)}(L)=\bigcup_{\xi\in[-\pi/X,\pi/X)}\sigma_{L^2_{\rm per}([0,X])}\left(L_\xi\right);
\]
%ENDCHANGED
see \cite{G} for details. As a result, the spectrum of $L$ may be decomposed into countably many curves $\lambda(\xi)$
such that $\lambda(\xi)\in\sigma(L_\xi)$ for $\xi\in[-\pi/X,\pi/X)$.

The spectra $\lambda$ of the Bloch operators $L_\xi$ may be characterized
as the zero set for fixed $\xi,\delta$ of the {\it Evans function}
%CHANGED-MJ2/28/12 changed 2X to X
%\be\label{evansdef}
%E(\lambda,\xi,\delta)={\rm det}\left(R(2X,\lambda,\delta)
%-e^{2i\xi X}Id_{\mathbb{C}^3}\right),
%\ee
\be\label{evansdef}
E(\lambda,\xi,\delta)={\rm det}\left(R(X,\lambda,\delta)
-e^{i\xi X}Id_{\mathbb{C}^3}\right),
\ee
%ENDCHANGED
where $R(.,\lambda)$ denotes
the resolvent (or monodromy) matrix associated to the linearized 
equation \eqref{sppb},
 %$(L-\lambda)v=0$, 
that is, the solution of
$(L-\lambda)R(\cdot, \lambda)=0$ and $R(0,\lambda)={\rm Id}$.
Thus, the spectra of $L$ consists of the union of zeros $\lambda$
as all values of $\xi\in \RM$ are swept out.
By analytic dependence on parameters of solutions of ODE, $R$, and
thus $E$, depend analytically on all parameters for $\delta>0$.

In the following, we will first prove that possible unstable eigenvalues are order $O(1)+iO(\delta^{-3/4})$ by using
a standard parabolic energy estimate.  By a bootstrap argument based on an approximate
diagonalisation of the first order differential system associated to \eqref {sppb},
we show that possible unstable eigenvalues are $O(1)$ which implies that they are necessarily of order $O(\delta)+iO(1)$.
We then provide an expansion 
%newKZ
in $\delta$
of the Evans function 
%newKZ
as $\delta\to 0$
in a bounded box close to the
imaginary axis with the help of a Fenichel-type procedure and an iterative
scheme based on the exact resolvent matrix associated to the linearized KdV equations.

\subsection{Boundedness
of unstable eigenvalues as $\delta\to 0$}\label{compact}

In this section, we bound the region in the unstable half plane $\Re(\lambda)\geq 0$ where the unstable
essential spectrum of the linearized operator $L$ may lie in the limit $\delta\to 0$.
Throughout, we use the notation 
%CHANGED-MJ2/28/12 changed 2X to X
%$\|u\|^2=\int_{-X}^X |u(x)|^2dx$. 
$\|u\|^2=\int_{0}^X |u(x)|^2dx$.
%ENDCHANGED
We begin by proving the following lemma, verifying
that the unstable spectra is $O(\delta^{-3/4})$ for $\delta$ sufficiently small.

%CHANGED-MR : ]0 + or
\begin{lemma}\label{lemma1}
There exist constants $C_1,C_2>0$ such that, for all $\delta\in(0, 1]$, the operator $L$ has no
$L^\infty(\RM)$ eigenvalues with 
$\Re(\lambda)\geq C_1$ 
%and $\delta^{3/4}|\Im(\lambda)|\geq C_2$.
or ($\Re(\lambda)\geq0$ and $\delta^{3/4}|\Im(\lambda)|\geq C_2$).
\end{lemma}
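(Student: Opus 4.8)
The plan is to establish the claimed region of spectral emptiness by a standard parabolic energy estimate carried out on the Bloch operators $L_\xi$, exploiting the fact that the fourth-order dissipative term $-\delta\partial_x^4$ dominates the lower-order dispersive and transport terms once $\Re(\lambda)$ or $\delta^{3/4}|\Im(\lambda)|$ is taken large. Since $\sigma_{L^2(\RM)}(L)=\bigcup_\xi \sigma_{L^2_{\rm per}}(L_\xi)$, it suffices to rule out eigenvalues of each $L_\xi$ in the stated region uniformly in $\xi\in[-\pi/X,\pi/X)$; I would therefore work throughout with an eigenfunction $w\in H^4_{\rm per}(0,X)$ satisfying $L_\xi w=\lambda w$, and write $D:=\partial_x+i\xi$ so that $L_\xi w=-\delta(D^4+D^2)w-D^3w-D((U-c)w)$.

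The key step is to take the complex $L^2_{\rm per}(0,X)$ inner product of the eigenvalue equation with $w$ and examine real and imaginary parts separately. First I would integrate by parts to move derivatives symmetrically, using periodicity to discard boundary terms. Taking the real part, the leading dissipative contribution is $-\delta\|D^2w\|^2+\delta\|Dw\|^2$, the skew-adjoint dispersive and transport terms $-D^3w$ and $-D((U-c)w)$ contribute either zero or terms bounded by the profile $\|U-c\|_{L^\infty}$ and its derivative. This yields a bound of the schematic form
\[
\Re(\lambda)\,\|w\|^2 \;+\; \delta\|D^2w\|^2 \;\le\; C\big(\delta\|Dw\|^2 + \|w\|\,\|Dw\| + \|w\|^2\big),
\]
where $C$ depends only on $\|U-c\|_{W^{1,\infty}}$, which is uniformly bounded as $\delta\to 0$ by Proposition \ref{p:kdvsolnexpand}. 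Interpolating $\|Dw\|^2\le \eta\|D^2w\|^2+C_\eta\|w\|^2$ and absorbing the $\delta\|D^2w\|^2$ term gives $\Re(\lambda)\le C_1$ for a fixed constant $C_1>0$, which establishes the first alternative. For the second alternative I would take instead the imaginary part of the inner product: the dispersive term $-D^3w$ supplies the dominant contribution $\Im\langle -D^3w,w\rangle$, controlled after integration by parts by $\|Dw\|\,\|w\|$ and ultimately by $\|D^2w\|\,\|w\|$ via interpolation, so that $|\Im(\lambda)|\,\|w\|^2\le C(\|D^2w\|\,\|w\|+\|w\|^2)$. Combining this with the real-part estimate (which controls $\delta\|D^2w\|^2$ whenever $\Re(\lambda)\ge 0$) and applying Young's inequality yields $\delta^{1/2}|\Im(\lambda)|\le C\delta^{1/2}\|D^2w\|/\|w\|+C\delta^{1/2}$, and squaring after optimizing produces $\delta^{3/2}|\Im(\lambda)|^2\le C$, i.e. $\delta^{3/4}|\Im(\lambda)|\le C_2$.

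The main obstacle, and the point requiring the most care, is balancing the powers of $\delta$ correctly in the imaginary-part estimate: the dispersive term is $\delta$-independent while the dissipation carries a factor $\delta$, so the high-frequency cutoff must scale like $\delta^{-3/4}$ rather than $\delta^{-1}$, and getting this exponent demands that the interpolation inequalities be applied with the optimal split between $\|D^2w\|$ and $\|w\|$ and that the cross terms from $\Im\langle -D((U-c)w),w\rangle$ be shown to be genuinely lower order. I would track all constants explicitly to confirm they depend only on $\|U-c\|_{W^{1,\infty}}$ and not on $\delta$ or $\xi$, so that the resulting $C_1,C_2$ are uniform. The remaining sharper reduction to $|\Im(\lambda)|\le C$ (hence $\Re(\lambda)=O(\delta)$) is deferred to the subsequent bootstrap argument via approximate diagonalization, as indicated in the text, and is not needed for this lemma.
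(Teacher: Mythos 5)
Your overall strategy---a single parabolic energy estimate on the eigenvalue equation, splitting into real and imaginary parts, controlling $\delta\|D^2w\|^2$ by $\|w\|^2$ when $\Re(\lambda)\geq 0$, and then interpolating---is exactly the route taken in the paper (there carried out with the quasi-periodic eigenfunction $v(x+X)=e^{i\xi X}v(x)$ rather than the twisted derivative $D=\partial_x+i\xi$, an immaterial difference). However, two of the intermediate inequalities you write down are incorrect, and as stated the argument does not close. First, in the real-part estimate the cross term $\|w\|\,\|Dw\|$ must not appear: if the transport term is bounded crudely by $\|U-c\|_{\infty}\|w\|\,\|Dw\|$, then absorbing it into $\delta\|D^2w\|^2$ via $\|Dw\|^2\leq\eta\|D^2w\|^2+C_\eta\|w\|^2$ forces $\eta\lesssim\delta$, hence $C_\eta\sim\delta^{-1}$, and after optimizing one only gets $\Re(\lambda)=O(\delta^{-1/3})$, not the uniform bound $\Re(\lambda)\leq C_1$. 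The point is that no such term is actually present: $\Re\langle -D((U-c)w),w\rangle=-\tfrac12\int_0^X U'|w|^2$ exactly, and $\langle -D^3w,w\rangle$ is purely imaginary, so the real-part identity reads $\Re(\lambda)\|w\|^2+\tfrac12\int_0^X U'|w|^2+\delta\big(\|D^2w\|^2-\|Dw\|^2\big)=0$, from which $\Re(\lambda)\leq\tfrac12\big(\|U'\|_\infty+\delta C\big)$ follows at once after interpolating only the $\delta\|Dw\|^2$ term.

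Second, your claim that $\Im\langle -D^3w,w\rangle$ is ``controlled after integration by parts by $\|Dw\|\,\|w\|$'' (hence by $\|D^2w\|\,\|w\|$) is false: one integration by parts gives $\langle D^2w,Dw\rangle$, which involves three derivatives in total and is bounded by $\|D^2w\|\,\|Dw\|\leq\|w\|^{1/2}\|D^2w\|^{3/2}$, not two. This is not cosmetic---it is precisely the source of the exponent $3/4$. Combining $\|w\|^{1/2}\|D^2w\|^{3/2}$ with $\delta\|D^2w\|^2\leq(\|U'\|_\infty+\delta)\|w\|^2$ (which follows from the real-part identity once $\Re(\lambda)\geq0$) gives $|\Im(\lambda)|\lesssim\delta^{-3/4}$, together with a lower-order $\delta^{-1/4}$ contribution from the transport term; your stated bound $\|D^2w\|\,\|w\|$ would yield the stronger, but unobtainable by this method, cutoff $\delta^{-1/2}$. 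With these two corrections your argument coincides with the paper's proof.
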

%CHANGED-MJ2/28/12 changed 2X to X Throughout this proof, changed integrals over [-X,X] to integrals over [0,X].
%CHANGED-MR : \Re
\begin{proof}
Suppose that $\lambda$ is an $L^\infty(\RM)$ eigenvalue of $L$ 
%with $\Re(\lambda)\geq 0$
 and let $v$ be a corresponding eigenfunction.
Multiplying equation \eqref{sppb} by $\bar{v}$ and integrating over one period,
we obtain
\begin{equation}\label{spbd}
\displaystyle
\lambda\|v\|^2-\int_{0}^{X}\big((U-c)v+v''\big)\bar{v}'dx+\delta\big(\|v''\|^2-\|v'\|^2\big)=0.
\end{equation}
Identifying the real and imaginary parts yields the 
system of equations:
%TODO:
%We identify real and imaginary parts of \eqref{spbd}: %({\bf Should $2X$ in first integral below be X?})
\begin{equation}
\begin{array}{ll}
\displaystyle
\Re(\lambda)\|v\|^2+\frac{1}{2}\int_{0}^{X}U'|v|^2dx+\delta\big(\|v''\|^2-\|v'\|^2\big)=0,\\
\displaystyle
|\Im(\lambda)|\|v\|^2\leq \|(U-c)\|_{\infty}\|v\|\|v'\|+\|v'\|\|v''\|.
\end{array}
\end{equation}
Here, we have used the fact that, by \eqref{e:ansatz}, $v(x+X)=e^{i\xi X}v(x)$ so that $|v|$ is $X$-periodic.
Next, using the Sobolev estimate $\|v'\|^2\leq C\|v\|^2/2+\|v''\|^2/(2C)$, valid for any $C>0$, into the first equation yields
the bound
\begin{equation}\label{rebd}
\displaystyle
\Re(\lambda)\|v\|^2+\delta\left(1-\frac{1}{2C}\right)\|v''\|^2\leq \frac{1}{2}\left(\|U'\|_{\infty}+\delta C\right)\|v\|^2,\quad C>0.
\end{equation}
Letting $C=1/2$ then yields %the following estimate on the real part of $\lambda$
$$
\displaystyle
\Re(\lambda)\leq \frac{1}{2}\left(\|U'\|_{\infty}+\frac{\delta}{2}\right),
$$
which verifies the stated bound on the real part of $\lambda$.

%MR
Suppose now $\Re(\lambda)\geq 0$.
%
%Now,
Using again the Sobolev estimate $\|v'\|\leq\|v\|^{1/2}\|v''\|^{1/2}$, the
imaginary part of $\lambda$ can be bounded as %Now, we estimate $\Im(\lambda)$:
$$
\displaystyle
|\Im(\lambda)|\|v\|^2\leq \|U-c\|_{\infty}\|v\|^{3/2}\|v''\|^{1/2}+\|v\|^{1/2}\|v''\|^{3/2}.
$$
Furthermore, $\|v''\|$ can be controlled by
\[
\delta\|v''\|^2\leq \big(\|U'\|_{\infty}+\delta\big)\|v\|^2,
\]
which follows by setting $C=1$ in \eqref{rebd} and recalling that $\Re(\lambda)\geq 0$ by hypothesis.
Thus, setting $K^2=\|U'\|_{\infty}+\delta$ we deduce that
\[
|\Im(\lambda)|\leq \|U-c\|_{\infty}K^{1/2}\delta^{-1/4}+K^{3/2}\delta^{-3/4},
\]
which completes the proof.
\end{proof}

\begin{remark}\label{r:extend1}
By slight modification, the estimates in Lemma \ref{lemma1} can be extended into the stable spectrum.  Indeed,
if $C_3>0$ then adding $C_3\|v\|^2$ to both sides of the bound \eqref{rebd} yields the estimate
$$
\displaystyle
(\Re(\lambda)+C_3)\|v\|^2+\delta(1-\frac{1}{2C})\|v''\|^2\leq \frac{1}{2}(\|U'\|_{\infty}+2C_3+\delta C)\|v\|^2.
$$
Thus, as long as $\Re(\lambda)+C_3\geq 0$ we can repeat the proof on the estimates of imaginary parts to conclude
$$
\displaystyle
|\Im(\lambda)|\leq K_3^{1/2}\delta^{-1/4}+K_3\delta^{-3/4},
$$
where $K_3=\|U'\|_{\infty}+2C_3+\delta$.
\end{remark}

Next, we bootstrap the estimates in Lemma \ref{lemma1} to provide a second energy estimate on the reduced ``slow," or ``KdV,"
block of the spectral problem \eqref{sppb}.  This yields a sharper estimate on the modulus of the possibly unstable spectrum,
in particular proving that unstable spectra must lie in a compact region in the complex plane.  Notice that this
result relies heavily on the fact that the corresponding spectral problem for the linearized KdV equation about a
cnoidal wave \eqref{kdvsoln} has been explicitly solved in \cite{BD,Sp}.

%CHANGED-MR : ]0 + or
\begin{proposition}\label{prop1}
There exist constants $C_1,C_2>0$ such that, for all $\delta\in(0, 1]$, the operator $L$ has no
$L^\infty(\RM)$ eigenvalues with $\Re(\lambda)\geq 0$
when $\displaystyle \Re(\lambda)\geq C_1\delta$ or $|\Im(\lambda)|\geq C_2$.
\end{proposition}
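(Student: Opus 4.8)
The plan is to bootstrap the crude bounds of Lemma \ref{lemma1} and Remark \ref{r:extend1}, which already confine any unstable eigenvalue ($\Re\lambda\ge 0$) to the strip $|\Im\lambda|\le C\delta^{-3/4}$ together with the \emph{a priori} control $\delta\|v''\|^2\le(\|U'\|_\infty+\delta)\|v\|^2$, that is $\|v''\|\le C\delta^{-1/2}\|v\|$. Working throughout on the Bloch operator $L_\xi$ (so that $v=e^{i\xi x}w$ with $|v|$ being $X$-periodic and all integrations over one period), I would first recast the spectral problem \eqref{sppb}, written out as $-\delta v''''-v'''-\delta v''-((U-c)v)'=\lambda v$, as a first-order system $Y'=A(x;\lambda,\delta)Y$ for $Y=(v,v',v'',v''')^{\top}$. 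Because $\delta$ multiplies the top-order term, the frozen-coefficient matrix $A$ carries one strongly damped ``fast'' eigenvalue $\mu_f\sim-\delta^{-1}$ (consistent with the attractivity of the slow manifold) together with three ``slow'' eigenvalues of size $O(|\lambda|^{1/3})$ forming the linearized-KdV, or \emph{slow}, block.

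The first genuine step is an approximate block-diagonalization separating the one-dimensional fast direction from the three-dimensional slow block, in the spirit of the asymptotic ODE reductions of \cite{JZ2,PZ,BHZ}. The spectral gap required for this splitting is $|\mu_f|\sim\delta^{-1}\gg|\lambda|^{1/3}$, which holds on the whole range left open by Lemma \ref{lemma1}, since there $|\lambda|\le C\delta^{-3/4}$ forces $|\lambda|^{1/3}\le C\delta^{-1/4}\ll\delta^{-1}$. After this reduction the fast component is slaved to the slow one up to a remainder that is lower order relative to the block, and the eigenvalue problem becomes equivalent to a reduced third-order problem $\mathcal L_{\rm KdV}v=\lambda v+\delta\,\mathcal R_\delta v$, where $\mathcal L_{\rm KdV}v:=-v'''-((U-c)v)'$ is the (Bloch) linearized KdV operator and $\mathcal R_\delta$ collects the damping/antidamping pair $v''''+v''$ (the fourth-order term re-expressed through the slaving relation) together with the diagonalization remainders, all bounded relative to $\mathcal L_{\rm KdV}$ uniformly in $\delta$.

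The second energy estimate is then performed on this slow block, and it is here that the explicit resolution of the linearized KdV spectrum in \cite{BD,Sp} is essential. Pairing the reduced equation with $\bar v$ and taking real parts reproduces the balance $\Re\lambda\,\|v\|^2=-\tfrac12\int_0^X U'|v|^2\,dx+\delta(\|v'\|^2-\|v''\|^2)+O(\delta)\|v\|^2$. The explicitly known spectrum of $\mathcal L_{\rm KdV}$ shows that $|\lambda_0|\to\infty$ is attained only through increasingly oscillatory eigenfunctions, giving the coercivity $\|v''\|^2\gtrsim|\Im\lambda|^{4/3}\|v\|^2$ and $\|v'\|^2\lesssim|\Im\lambda|^{2/3}\|v\|^2$ on the slow block. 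Hence $\delta(\|v'\|^2-\|v''\|^2)\le\delta\,|\Im\lambda|^{2/3}\big(C-c\,|\Im\lambda|^{2/3}\big)\|v\|^2$, so the $-\delta\|v''\|^2$ damping overwhelms the $+\delta\|v'\|^2$ antidamping once $|\Im\lambda|\ge C_2$; since $\int_0^X U'|v|^2\,dx$ vanishes identically on genuine KdV eigenfunctions (a direct consequence of $\Re\lambda_0\equiv0$) and is therefore $O(\delta)\|v\|^2$ on the $O(\delta)$-nearby $v$, this forces $\Re\lambda<0$, contradicting $\Re\lambda\ge0$ and establishing $|\Im\lambda|\le C_2$. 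On the remaining bounded set $|\Im\lambda|\le C_2$ the frequency is $O(1)$, so $\|v'\|,\|v''\|=O(\|v\|)$, and the same balance together with $\int_0^X U'|v|^2\,dx=O(\delta)\|v\|^2$ yields $\Re\lambda\le C_1\delta$, completing both bounds.

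The hard part will be making the reduction and the slow-block coercivity \emph{uniform} across the whole window $1\le|\lambda|\le C\delta^{-3/4}$: one must check that the block-diagonalization errors are genuinely subordinate to $\mathcal L_{\rm KdV}$ on all of this range, not merely for $|\lambda|=O(1)$, and, more delicately, that the frequency-versus-$|\lambda|$ lower bound $\|v''\|^2\gtrsim|\Im\lambda|^{4/3}\|v\|^2$ holds with a constant independent of $\delta$ and of possible near-multiplicities in the KdV Bloch spectrum. Controlling $\int_0^X U'|v|^2\,dx$ through the exact eigenfunctions of \cite{BD,Sp}, rather than by a soft skew-symmetry argument (which only returns the non-skew part $-\tfrac12\int_0^X U'|v|^2\,dx$ with no definite sign), is precisely what converts the neutral $\Re\lambda_0\equiv0$ of KdV into the quantitative $\Re\lambda\le C_1\delta$, and is the step where the integrable structure of the limiting problem cannot be dispensed with.
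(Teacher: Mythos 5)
Your proposal follows essentially the same route as the paper: bootstrap from Lemma \ref{lemma1}, a Fenichel-type reduction of the first-order system to a three-dimensional ``KdV block'' plus an exponentially slaved fast scalar, and then a second estimate on that block using the explicit large-$|\lambda|$ asymptotics of the Bottman--Deconinck eigenfunctions, whose $O(|\lambda|^{1/3})$ spatial oscillation makes the $-\delta\|v''\|^2$ damping dominate the $+\delta\|v'\|^2$ antidamping. The only real difference is presentational: the paper conjugates the slow block by the Floquet matrix $P$ of the linearized KdV system and reads the conclusion off the real part of the surviving diagonal exponent $\Gamma(\lambda)_{11}$, with remainders tracked to $O(\delta^{5/4})$, rather than pairing the reduced equation with $\bar v$ --- but the mechanism, the reliance on the integrable structure, and the uniformity over $1\le|\lambda|\le C\delta^{-3/4}$ (which you correctly identify as the hard part) are identical.
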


\begin{proof}
The proof is done in two steps: first, we show that if $\lambda$ is an $L^\infty(\RM)$ eigenvalue of $L$
with $\Re(\lambda)\geq 0$ and corresponding eigenfunction $v$,
then there exists $C_2>0$ such that $|\Im(\lambda)|\leq C_2$.  The estimate on $\Re(\lambda)$ will then easily follow.
To begin, let $(v,\lambda)$ be an $L^\infty(\RM)$-eigenpair of \eqref{sppb} with $\Re(\lambda)\geq 0$
and set $x=v$, $y=v'$, $z=v''+v$, $w=z'$, and $s=c+1$,
so that \eqref{sppb} may be written as the first order system
\begin{equation}\label{evans1}
\displaystyle
x'=y,\quad y'=z-x,\quad z'=w,\quad \delta w'=-w-(U'+\lambda)x-(U-s)y.
\end{equation}
We first apply a Fenichel-type procedure and introduce $w_1=w+(U'+\lambda)x+(U-s)y$, noting then  that $w_1$ satisfies
$$
\displaystyle
\delta w_1'=-w_1+\delta\Big(U''x+(2U'+\lambda)y+(U-s)(z-x)\Big).
$$
We further introduce $w_2=w_1-\delta\Big(U''x+(2U'+\lambda)y+(U-s)(z-x)\Big)$ so that $w_2$ satisfies the equation
\begin{eqnarray}
\displaystyle
\delta w_2'&=&-\big(1+\delta^2(U-s)\big)w_2-\delta^2\big((U'''-(U-s)(U'+\lambda-\delta U'')\big)x\nonumber\\
\displaystyle
&&-\delta^2\Big((3U''-(U-s)^2+\delta(U-s)(2U'+\lambda))y+(3U'+\lambda+\delta(U-s)^2)(z-x)\Big).\nonumber
\end{eqnarray}%}
Now, by Lemma \ref{lemma1} we know that necessarily one has $\Re(\lambda)+\delta^{3/4}|\Im(\lambda)|\leq C$
for some constant $C>0$.  It follows 
that $\lambda\delta=o(1)$ as $\delta\to 0$, hence we may
rewrite system \eqref{evans1} as
\begin{equation}\label{evans2}
\begin{aligned}%\label{evans2}
x'&=y,\qquad y'=z-x,\\
z'&=w_2-(U'+\lambda)x-(U-s)y+\delta\Big(U''x+(2U'+\lambda)y+(U-s)(z-x)\Big),\\
\delta w_2'&=-w_2-\delta^2\lambda\Big((z-x)-(U-s)x\Big)+O(\delta^2(|x|+|y|+|z|+|w_2|)).
\end{aligned}
\end{equation}
Next, we remove $w_2$ from the equation in $z$ by introducing the variable $z_*=z+\delta w_2$,
in terms of which
\eqref{evans2} reads
\begin{equation}\label{evans2_1}
\begin{aligned}
x'&=y,\qquad y'=z_*-\delta w_2-x,\\
z_*'&=-(U'+\lambda)x-(U-s)y+\delta\Big(U''x+(2U'+\lambda)y+(U-s)(z_*-x)\Big)\\
&\quad\quad-\delta^2\lambda\big((z_*-x)-(U-s)x\big)+O(\delta^2(|x|+|y|+|z_*|+|w_2|)),\\
\delta w_2'&=-w_2-\delta^2\lambda\Big((\bar{z}-x)-(U-s)x\Big)+O(\delta^2(|x|+|y|+|z_*|+|w_2|)).
\end{aligned}
\end{equation}
We further introduce the variables $\bar{y}=y-\delta^2 w_2$, $\bar{x}=x$ and $\bar{z}=z_*-\bar{x}$. The system \eqref{evans2_1} then reads
\begin{equation}\label{evans3}
\begin{aligned}
\bar{x}'&=\bar{y}+O(\delta^2(|\bar x|+|\bar y|+|\bar{z}|+|w_2|)),\quad \bar{y}'=\bar{z}+O(\delta^2(|\bar x|+|\bar y|+|\bar{z}|+|w_2|)),\\
\bar{z}'&=-(U'+\lambda)\bar{x}-(U-s)\bar{y}+\delta\Big(U''\bar x+(2U'+\lambda)\bar y+(U-s)\bar{z}\Big)\\
&\quad\quad-\delta^2\lambda\big(\bar{z}-(U-s)\bar{x}\big)+O(\delta^2(|\bar x|+|\bar y|+|\bar{z}|+|w_2|)),\\
\delta w_2'&=-w_2-\delta^2\lambda\Big(\bar{z}-(U-s)\bar{x}\Big)+O(\delta^2(|\bar x|+|\bar y|+|\bar{z}|+|w_2|)).
\end{aligned}
\end{equation}
In particular, by direct comparison with \eqref{kdv},
we recognize the $(\bar x,\bar y, \bar z)$ equations in \eqref{evans3}
as simply the KdV equation plus an $O(\delta)$ corrector.

The above calculations motivate us to make a reduction to the ``KdV block" of the spectral problem \eqref{sppb}.
More precisely, 
%CHANGED-MR : expansion of U !!!
recalling \eqref{kdvsolnexpand}, 
we write the differential system
(\ref{evans3}$_{1}$, \ref{evans3}$_{2}$, \ref{evans3}$_{3}$) on
$\bar X=(\bar x,\bar y,\bar z)^T$ as
\begin{equation}\label{evans-kdv}
\displaystyle
\bar X'=\Big(A_{0}+\delta \big(A_1+\lambda A_2\big)+\lambda\delta^2A_3+O(\delta^2)\Big)\bar X+O(\delta^2|w_2|),
\end{equation}
where
%CHANGED-MR : expansion of U !!!
\begin{equation}\label{kdvsystem}
A_{0}=\left(\begin{array}{ccc} 0 & 1 & 0\\
                                                    0 & 0 & 1\\
                                                    -(U_0'+\lambda) & -(U_0-c_0) & 0
                                                    \end{array}\right)
\end{equation}
denotes the coefficient matrix for the linearized KdV equation, 
and $A_1$, $A_2$, and $A_3$ are defined as
\[
A_1=\left(\begin{array}{ccc} 0 & 0 & 0\\
          0 & 0 & 0\\
          -U_1'+U'' &-U_1+ 2U' & U-s\end{array}
    \right),
A_2=\left(\begin{array}{ccc}
          0 & 0 & 0\\
          0 & 0 & 0\\
          0 & 1 & 0
          \end{array}\right),
A_3=\left(\begin{array}{ccc}
          0 & 0 & 0\\
          0 & 0 & 0\\
          (U-s) & 0 & -1
          \end{array}\right).
\]
%ENDCHANGED-MR

In order to analyze \eqref{evans-kdv} for $0<\delta\ll 1$, we recall that
in \cite{BD} the complete integrability of \eqref{kdv} was used to determined a basis of
solutions of $X'=A_{0}X$, at least when $\lambda\neq 0$,
which corresponds to linearized KdV equation about the periodic wave train 
%CHANGED-MR : expansion of U !!!-->> _0
$U_0$.
Specifically, such a basis $(V_i)_{i=1,2,3}$ is defined as $V_i=(\hat{u}_i, \hat{u}_i', \hat{u}_i'')$ with $\hat{u}_i$ given by
\[
\hat{u}_i(\omega,\lambda)=\left(1-\frac{U_0'}{3\lambda}\right)\textrm{exp}\left(\int_0^{\omega} \frac{\lambda dy}{U_0(y)/3-c+\eta_i}\right),
\]
and $\eta_i$ are solutions of the polynomial equation
\begin{equation}\label{etaeqn}
(\eta-4\xi_1)(\eta-4\xi_2)(\eta-4\xi_3)=\lambda^2,
\end{equation}
where $\xi_1=k^2-1, \xi_2=2k^2-1, \xi_3=k^2$.  In order to deal with the limit
$|\lambda|\to\infty$, we introduce the diagonal matrix $D(\lambda)$ with
\[
D_{ii}(\lambda)=-\left\langle \frac{\lambda}{U_0/3-c+\eta_i}\right\rangle,
\]
where $\langle g(\cdot)\rangle$ denotes the average of the function $g$ over a spatial period of $U$,
and write a resolvent
matrix for $X'=A_{0}X$ as
\[
R(\lambda,\omega)=P(\lambda,\omega)e^{D(\lambda)\omega},
\]
where $P(\lambda,\omega)=(\bar{V}_1,\bar{V}_2,\bar{V}_3)(\lambda,\omega)$ is the matrix function
with columns being given by the vector valued functions
$\displaystyle\bar{V}_{k,i}(\lambda,\omega)=e^{-D_{kk}(\lambda)\omega}\partial_\omega^{(i-1)}\hat{u}_k(\omega), i=1,2,3$.
Next we make the {\it periodic} change of variable $\bar X(\omega)=P(\lambda,\omega) Y(\omega)$,
which is nothing but the classical change of variable in Floquet's theorem.  In terms of $Y$,  system \eqref{evans-kdv} expands as
\begin{equation}\label{evans-kdv1}
\displaystyle
Y'=\left(D(\lambda)+\delta P^{-1}\Big( A_1+\lambda A_2+\lambda\delta A_3+O(\delta)\Big)P\right)Y+O(\delta^2\|P^{-1}\||w_2|)
\end{equation}
as $|\lambda|\to\infty$.

We now analyze the individual terms in \eqref{evans-kdv1} more closely.  To this end, first notice that as $|\lambda|\to\infty$
the eigenfunctions associated to the linearized KdV equation expand as
\[
\hat{u}_i(\omega,\lambda)=\left(1+O\left(|\lambda|^{-1/3}\right)\right)e^{D_{ii}(\lambda)\omega}.
\]
It follows that as $|\lambda|\to\infty$ the matrix $P$ defined above expands as
\[
P=\left(\begin{array}{ccc} 1 & 1 & 1\\
                           \Lambda_1 & \Lambda_2 & \Lambda_3\\
                           \Lambda_1^2 & \Lambda_2^2 & \Lambda_3^2
\end{array}\right)(1+O(|\lambda|^{-1/3}),
\]
where 
$\Lambda_i:=D_{ii}(\lambda)$.  
Thus, by a straightforward calculation we see that
as $|\lambda|\to\infty$ we have the estimates $\|P(\lambda,\cdot)\|_{L^\infty(\RM)}=O(|\lambda|^{2/3})$ and
\[
\|P^{-1}(\lambda,\cdot)\|_{L^\infty(\RM)}=O(1),\quad \|P^{-1}A_1P\|_{L^\infty(\RM,d\omega)}=O(1),
\quad \|P^{-1}A_3P\|_{L^\infty(\RM,d\omega)}=O(1)
\]
hence, using the fact that $|\lambda|\delta^{3/4}=O(1)$, equation \eqref{evans-kdv1} can be rewritten as
\begin{equation}\label{evans-kdv2}
\displaystyle
Y'=\left(D(\lambda)+\delta(\lambda P^{-1}A_2P+O(1))+O(\delta^{5/4})\right)Y+O(\delta^2|w_2| ).
\end{equation}
Finally, with a near-identity change of variables
of the form $\widetilde Y=\left(Id+O(\delta|\lambda|^{1/3})\right)Y$
one can remove the non-diagonal part of $\delta(\lambda P^{-1}A_2P+O(1))$ up to $O(\delta^{5/4})$ so that \eqref{evans-kdv2} reads
\begin{equation}\label{evans-kdv3}
\displaystyle
\tilde{Y}'=\left(D(\lambda)+\delta{\rm diag}(\lambda P^{-1}A_2P+O(1))+O(\delta^{5/4})\right)\tilde{Y}+O(\delta^2|w_2|).
\end{equation}

Next, define the diagonal matrix $\Gamma(\lambda):=D(\lambda)+\delta{\rm diag}(\lambda P^{-1}A_2P+O(1))$ with diagonal entries
\begin{align*}
\displaystyle
\Gamma(\lambda)_{11}&=\Lambda_1+\delta\left(\lambda\frac{\Lambda_1(\Lambda_3-\Lambda_2)}{\Delta}+O(|\lambda|^{1/3})\right),\nonumber\\
\displaystyle
\Gamma(\lambda)_{22}&=\Lambda_2+\delta\left(\lambda\frac{\Lambda_2(\Lambda_1-\Lambda_3)}{\Delta}+O(|\lambda|^{1/3})\right),\nonumber\\
\displaystyle
\Gamma(\lambda)_{33}&=\Lambda_3+\delta\left(\lambda\frac{\Lambda_3(\Lambda_2-\Lambda_1)}{\Delta}+O(|\lambda|^{1/3})\right),
\end{align*}
where $\Delta:=(\Lambda_2-\Lambda_1)(\Lambda_3-\Lambda_1)(\Lambda_3-\Lambda_2)$.  From \eqref{etaeqn} it follows
that $\eta_i=O(|\lambda|^{2/3})$ as $|\lambda|\to\infty$, from which we see $\Lambda_i(\lambda)=O(|\lambda|^{1/3})$
in this limit.  Introducing the
polar coordinates $\lambda=|\lambda|e^{i(\pi/2-\theta)}$, and noting that
$\Re(\lambda)=O(1)$ by Lemma \ref{lemma1},
we find that $\theta=O(|\lambda|^{-1})$ as $|\lambda|\to\infty$.
Directly expanding the $D_{ii}(\lambda)$, we have
\[
\Lambda_1=|\lambda|^{1/3}e^{i(\pi/2-\theta/3)}+O(\lambda^{-1/3}),\quad \Lambda_2=j\Lambda_1+O(1),\quad \Lambda_3=j^2\Lambda_1+O(1),
\]
where $j=e^{2\pi i/3}$ denotes the principal third root of unity so that, in particular, we have the estimates
\begin{equation}\label{Lambdabds}
\Re(\Lambda_2)=\frac{\sqrt{3}}{2}|\lambda|^{1/3}+O(1),\quad \Re(\Lambda_3)=-\frac{\sqrt{3}}{2}|\lambda|^{1/3}+O(1)
\end{equation}
as $|\lambda|\to\infty$.

With the above preparations, we are now in a position to
perform the necessary energy estimates.
%CHANGED-MJ2/28/12 changed 2X to X
Indeed, under the condition 
%$\tilde Y(x+X)=e^{i\gamma}\tilde Y(x-X)$ and $w_2(x+X)=e^{i\gamma} w_2(x-X)$,
$\tilde Y(x+X)=e^{i\gamma}\tilde Y(x)$ and $w_2(x+X)=e^{i\gamma} w_2(x)$
%ENDCHANGED
and recalling that $\|P(\lambda,\cdot)\|_{L^\infty(\RM)}=O(|\lambda|^{2/3})$, it follows from \eqref{evans3} that
\begin{equation}\label{w2bd}
\|w_2\|\leq C|\lambda|\delta^2\|P(\lambda,.)\|_{\infty}\big(\|\tilde x\|+\|\tilde y\|+\|\tilde z\|\big)
\leq C\delta^{3/4}\big(\|\tilde x\|+\|\tilde y\|+\|\tilde z\|\big),
\end{equation}
where here we set $\tilde Y=(\tilde x,\tilde y,\tilde z)^T$.  Similarly, using the bounds in \eqref{Lambdabds},
it follows from \eqref{evans-kdv3} that
\begin{equation}\label{yzbd}
\|\tilde{y}\|+\|\tilde{z}\|\leq C\frac{\delta^{5/4}}{|\lambda|^{1/3}}\|\tilde{x}\|.
\end{equation}
Inserting the bounds \eqref{w2bd} and \eqref{yzbd} into the $\tilde{x}$ equation in \eqref{evans-kdv3} and recalling
that the function $\tilde{x}$ must be uniformly bounded on $\RM$ as a function of $\omega$, we find necessarily
that $\Re(\Gamma(\lambda)_{11})=O(\delta^{5/4})$ as $|\lambda|\to\infty$, i.e. we have
\[
\Re(\Lambda_1)+\delta\left(\frac{|\lambda|^{2/3}}{3}+O(|\lambda|^{1/3})\right)=O(\delta^{5/4})
\]
which, as $|\lambda|\to\infty$, reduces to
\begin{equation}\label{evans-kdvf}
\displaystyle
0\leq \frac{\Re(\lambda)}{|\lambda|^{2/3}}(1+O(|\lambda|^{-1/3}))+\delta\left(|\lambda|^{2/3}+O(|\lambda|^{1/3})\right)\leq C\delta^{5/4}.
\end{equation}
Since we have assumed $\Re(\lambda)\geq 0$ it immediately follows that $|\lambda|$ must indeed be bounded.  More precisely,
we deduce that there exists $C_2$ and $\delta_1>0$ such that for  all $0<\delta<\delta_1$,
the operator $L$ has no unstable eigenvalues $\lambda$ on $L^\infty(\RM)$ such that
$|\lambda|>C_2$.  As we have already verified in Lemma \ref{lemma1} that $\Re(\lambda)$
is necessarily bounded, we obtain a uniform bound on $|\Im(\lambda)|$.
Moreover, it is then easy to show, by using \eqref{evans-kdvf}, that, necessarily,
possible unstable eigenvalues satisfy $0\leq\Re(\lambda)\leq C\delta$ for some
constant $C>0$, and the proposition is proved.\end{proof}

\begin{remark}\label{r:extend2}
As discussed in Remark \ref{r:extend1}, the estimate $|\Im(\lambda)|=O(\delta^{-3/4})$ is actually valid so long
as $\Re(\lambda)=O(1)$.  Thus, by repeating the argument of Proposition \ref{prop1}, one can prove that
for any $C>0$ there exists $M,\delta_1>0$ such that if $0\leq\delta\leq \delta_1$ and $|\lambda|\geq M$,
then there are no eigenvalues $\lambda$ such that $\Re(\lambda)\geq -C\delta$.
\end{remark}

As a result of Proposition \ref{prop1} and Remark \ref{r:extend2}, we have proved the following corollary.

\begin{corollary}\label{cor_eig}
Given any constant $C>0$, there exist constants $M,\delta_1>0$ such that for $0\leq\delta<\delta_1$ we have
\[
\sigma_{L^2(\mathbb{R})}(L)\subset\left\{\lambda\in\mathbb{C}\,|\,\Re(\lambda)\leq -C\delta \right\}\cup\left\{\lambda\in\mathbb{C}\,|\,|\Re(\lambda)|\leq C\delta,\:|\Im(\lambda)|\leq M\right\}.
\]
\end{corollary}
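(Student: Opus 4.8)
The plan is to read this corollary purely as a repackaging of Proposition~\ref{prop1} and Remark~\ref{r:extend2}, with no new estimates needed. The one preliminary point I would record is the translation between operators and spectra: by the Bloch--Floquet characterization established above, $\sigma_{L^2(\RM)}(L)=\bigcup_{\xi}\sigma_{L^2_{\rm per}([0,X])}(L_\xi)$, and a value $\lambda$ belongs to this set exactly when $L$ admits an $L^\infty(\RM)$ eigenfunction of the form $e^{i\xi x}w$. Consequently every statement proved above for ``$L^\infty(\RM)$ eigenvalues of $L$'' is literally a statement about points of $\sigma_{L^2(\RM)}(L)$, so I may feed spectral points directly into Proposition~\ref{prop1} and Remark~\ref{r:extend2}.

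Fix $C>0$. First I would apply Remark~\ref{r:extend2} with this same constant $C$ to obtain $M_0,\delta_1>0$, shrinking $\delta_1\le 1$, so that for $0\le\delta\le\delta_1$ the operator $L$ has no spectrum $\lambda$ with $\Re(\lambda)\ge -C\delta$ and $|\lambda|\ge M_0$. I then set $M:=\max(M_0,C_2)$, with $C_2$ the constant of Proposition~\ref{prop1}. The argument proceeds by a dichotomy on an arbitrary $\lambda\in\sigma_{L^2(\RM)}(L)$ according to the sign of $\Re(\lambda)+C\delta$. If $\Re(\lambda)\le -C\delta$, then $\lambda$ lies in the first set and nothing is to be shown. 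Otherwise $\Re(\lambda)>-C\delta$, and the choice of $M_0$ from Remark~\ref{r:extend2} forces $|\lambda|<M_0\le M$, hence in particular $|\Im(\lambda)|\le M$.

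It then remains only to bound $\Re(\lambda)$ from above by $C\delta$. If $\Re(\lambda)<0$ this is immediate, since $-C\delta<\Re(\lambda)<0$ already gives $|\Re(\lambda)|\le C\delta$. If instead $\Re(\lambda)\ge 0$, I would invoke Proposition~\ref{prop1}, which confines the genuinely unstable part to $\Re(\lambda)\le C_1\delta$ (and $|\Im(\lambda)|\le C_2\le M$ again); absorbing $C_1$ into $C$, i.e. taking $C\ge C_1$, yields $\Re(\lambda)\le C\delta$. In either case $|\Re(\lambda)|\le C\delta$ and $|\Im(\lambda)|\le M$, so $\lambda$ lies in the second set, and the claimed inclusion follows by exhausting $\sigma_{L^2(\RM)}(L)$.

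The only genuinely delicate aspect is the bookkeeping of constants, and I expect this to be the main (mild) obstacle rather than any estimate. Two distinct mechanisms are being glued: the near-axis imaginary-part bound is supplied by Remark~\ref{r:extend2}, which must be invoked with the \emph{same} $C$ appearing in the stable threshold $-C\delta$ — this is precisely where the freedom to prescribe $C$ is consumed — while the sharp $O(\delta)$ upper bound on $\Re(\lambda)$ for the unstable part is supplied by Proposition~\ref{prop1}. For the two regions to cover all of $\sigma_{L^2(\RM)}(L)$ one needs $M$ to dominate both $M_0$ and $C_2$, and the box half-width $C\delta$ to dominate the Proposition~\ref{prop1} bound $C_1\delta$; the latter is why the statement is understood for $C$ at least as large as $C_1$ (the regime in which the corollary is subsequently applied), with $M$ and $\delta_1$ allowed to depend on $C$.
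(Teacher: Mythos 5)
Your proposal is correct and follows exactly the route the paper intends: the paper offers no separate proof beyond the sentence ``As a result of Proposition~\ref{prop1} and Remark~\ref{r:extend2}, we have proved the following corollary,'' and your dichotomy on the sign of $\Re(\lambda)+C\delta$, together with $M=\max(M_0,C_2)$, is precisely the bookkeeping that sentence leaves implicit. Your observation that the real-part bound genuinely requires reading the statement with $C\geq C_1$ (the regime in which the corollary is later used) is a fair and accurate caveat about the literal wording, not a gap in your argument.
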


In summary, we have restricted the location of the
unstable part of the $L^2(\RM)$-spectrum of the linearized operator $L$
to a compact subset of $\mathbb{C}$,
uniformly for $\delta$ sufficiently small.
Our next goal is to prove convergence, for a fixed $\xi$, of the eigenvalues of the Bloch operator
$L_\xi$ to the eigenvalues of the linearized KdV equation as $\delta\to 0$. 
%CHANGED-MR : is
This is accomplished in the next
section through the use of the periodic Evans function.

\br\label{pzrmk}
The structure of the argument of Proposition \ref{prop1}
may be recognized as somewhat similar to those of arguments
used in \cite{JZ2,PZ,HLZ,BHZ} to treat other delicate limits
in asymptotic ODE.
A new aspect here is the incorporation of detailed estimates
on the limiting system afforded by complete integrability of (KdV),
which appear to be crucial in obtaining the final
result.
\er

\mathversion{bold}
\subsection{Expansion of the Evans function as $\delta\to 0$}
\mathversion{normal}

In this section, we provide an expansion of both the Evans function and eigenvalues in the
vicinity of the imaginary axis where {\it all} the eigenvalues are 
located at $\delta=0$ (this
is the spectral stability result of \cite{BD,Sp}).  
To this end, we will use the basis of solutions
constructed in \cite{BD} to build an approximation of the resolvent matrix associated to
the full spectral problem \eqref{sppb}.  This leads us to the following result.

%HERE
%TODO: the evans fn. below is never defined!!!-KZ
%Neither resolvent matrix R... DONE.
\begin{proposition}\label{p:evansexpand}
On any compact set $\lambda \in K\subset \mathbb{C}$, the Evans function 
%CHANGEDnewKZ
\eqref{evansdef} of the spectral problem \eqref{sppb} expands,
%ENDCHANGED
up to a nonvanishing analytic factor, as
 \begin{equation}\label{e:evans}
 \begin{array}{ll}
 \displaystyle
 E(\lambda,\xi,\delta)=E_{kdv}(\lambda,\xi)+\delta E_1(\lambda,\xi)+O(\delta^2),\quad 0<\delta\ll 1,
\end{array}
\end{equation}
%CHANGED-MJ2/28/12 Changed 2X to X
%with $\displaystyle E_{kdv}(\lambda,\xi)={\rm det}\left(R_{kdv}(2X,\lambda)-e^{2i\xi X}Id_{\mathbb{C}^3}\right),$ 
with $\displaystyle E_{kdv}(\lambda,\xi)={\rm det}\left(R_{kdv}(X,\lambda)-e^{i\xi X}Id_{\mathbb{C}^3}\right),$
%ENDCHANGED
$R_{kdv}(.,\lambda)$ being the resolvent matrix associated to the linearized (KdV) equation.
As a consequence, for each fixed Bloch wave number $\xi\in[-\pi/X,\pi/X)$ and $\delta$ sufficiently small,
if $(\lambda_{\delta}(\xi))_{\delta>0}\in K$ is an associated eigenvalue of $L_\xi$ then
$\lambda_{\delta}(\xi)$ converges to $\lambda_0(\xi)$, an eigenvalue of the linearized KdV equation, as $\delta\to 0$.
\end{proposition}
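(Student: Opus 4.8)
The plan is to exploit the singular-perturbation structure already exhibited in the proof of Proposition~\ref{prop1}. After the successive near-identity changes of variables leading to \eqref{evans3}, the first-order system governing \eqref{sppb} splits into a three-dimensional \emph{slow} block in $(\bar x,\bar y,\bar z)$ that coincides with the linearized KdV system \eqref{kdvsystem} up to an $O(\delta)$ corrector, together with a scalar \emph{fast} equation $\delta w_2'=-w_2+O(\delta^2)$ weakly coupled back into the slow block through terms of size $O(\delta^2|w_2|)$. I would first show that the fast mode contributes only a nonvanishing analytic prefactor to the Evans function, and then expand the remaining slow block as a regular perturbation of $R_{kdv}$.

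Concretely, the fast direction $w_2$ contracts at rate $e^{-\omega/\delta}$, so over one period its Floquet multiplier is $O(e^{-X/\delta})$, exponentially small in $\delta$. Hence the monodromy matrix $R(X,\lambda,\delta)$ of the full system has three multipliers that are $O(1)$ (close to the KdV ones) and one that is exponentially small. Writing $\det(R-e^{i\xi X}\,\mathrm{Id})$ and block-triangularizing along the fast/slow splitting, the fast mode contributes a factor of the form $e^{-X/\delta}+O(\delta)-e^{i\xi X}$, which is analytic in $\lambda$ and, since $|e^{i\xi X}|=1$ while the fast multiplier is exponentially small, bounded away from zero uniformly on $K$. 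This is the \qut{nonvanishing analytic factor} of the statement; dividing it out leaves the $3\times3$ determinant built from the slow block.

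Next I would construct the resolvent of the slow block as an analytic perturbation series in $\delta$. Using the explicit basis $(V_i)_{i=1,2,3}$ of solutions of $X'=A_0X$ from \cite{BD} to represent $R_{kdv}$ (the basis degenerates at $\lambda=0$, but the resolvent itself extends analytically there, as it solves a linear ODE whose coefficients are entire in $\lambda$), I would set up the variation-of-constants iteration for \eqref{evans-kdv}. Since $A_0$ generates $R_{kdv}$, the remaining coefficient is $\delta(A_1+\lambda A_2)+O(\delta^2)$, and the feedback from $w_2$ is $O(\delta^2)$ by \eqref{w2bd}, the Picard iteration converges uniformly on the compact set $K$ and yields $R_{\mathrm{slow}}(X,\lambda,\delta)=R_{kdv}(X,\lambda)+\delta R_1(\lambda)+O(\delta^2)$ with coefficients analytic in $\lambda$. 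Expanding the determinant by multilinearity then gives $E=E_{kdv}+\delta E_1+O(\delta^2)$ up to the fast prefactor, which is \eqref{e:evans}.

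For the convergence consequence I would use a Hurwitz/Rouch\'e-type argument: $E_{kdv}(\cdot,\xi)$ and $E(\cdot,\xi,\delta)$ are analytic in $\lambda$ on $K$, and by \eqref{e:evans} the latter converges to the former uniformly on $K$ as $\delta\to0$. Hence any accumulation point of the zeros $\lambda_\delta(\xi)$ of $E(\cdot,\xi,\delta)$ lying in $K$ must be a zero of $E_{kdv}(\cdot,\xi)$, i.e.\ an eigenvalue of the linearized KdV operator; compactness of $K$ then forces $\lambda_\delta(\xi)\to\lambda_0(\xi)$ along the given family. The step I expect to be the main obstacle is the clean separation of the fast mode: one must verify that the exponentially contracting $w_2$-direction genuinely factors out of the monodromy with an analytic, nonvanishing multiplier, uniformly in $\lambda\in K$ down to $\lambda=0$ where the \cite{BD} basis breaks down, and that its $O(\delta^2|w_2|)$ feedback into the slow block does not spoil the regular $O(\delta)$ expansion. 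Controlling this feedback throughout the iteration, via the estimate \eqref{w2bd} together with analyticity of the resolvent in $\lambda$, is the delicate point that makes the reduction go through.
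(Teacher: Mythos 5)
Your proposal is correct and follows essentially the same route as the paper: the slow/fast splitting inherited from the reduction in Proposition \ref{prop1}, the identification of the exponentially contracting $w_2$-mode as a nonvanishing analytic prefactor in the monodromy determinant (the paper realizes this by constructing a fourth, fast basis solution via a fixed point in a weighted space and expanding the $4\times4$ determinant along its column), and a Duhamel/fixed-point expansion of the slow $3\times3$ resolvent as $R_{kdv}+O(\delta)$ with the $O(\delta^2|w_2|)$ feedback controlled exactly as you describe. The concluding Hurwitz/Rouch\'e step matches the paper's (implicit) argument for convergence of the eigenvalues.
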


\begin{proof}
First, we carry out a Fenichel-type computation on the spectral
problem \eqref{sppb} up to $O(\delta^3)$, noting that by Corollary \ref{cor_eig}
the $L^\infty(\RM)$ eigenvalues of the operator $L$ are uniformly bounded in $\CM$.
Recall that in the proof of Proposition \ref{prop1} the spectral problem \eqref{sppb} was transformed into system \eqref{evans3}:
\begin{equation}\label{evans4}
\begin{aligned}
\bar x'&=\bar y+O(\delta^2|w_2|),\quad \bar y'=\bar z+O(\delta^2(|\bar x|+|\bar y|+|\bar z|+|w_2|)),\\
\bar z'&=-(U'+\lambda)\bar x-(U-c)\bar y+\delta\left(U''\bar x+(2U'+\lambda)\bar y+(U-s)\bar z\right)\\
&\quad\quad+O(\delta^2(|\bar x|+|\bar y|+|\bar z|+|w_2|)),\\
\delta w_2'&=-w_2+O(\delta^2(|\bar x|+|\bar y|+|\bar z|+|w_2|)).
\end{aligned}
\end{equation}
Introducing $Y=(x,y,z)$, we can thus write \eqref{evans4} as
\begin{equation}\label{evans5}
\displaystyle
Y'=A(\delta,\lambda)Y+\delta^2 w_2\,F,\quad \delta w_2'=-(1+O(\delta^2))w_2+\delta^2 G^TY,
\end{equation}
%CHANGED-MJ2/28/12 Changed 2X to X
%where $F,G\in C^0_{per}((-X,X);\mathbb{C}^3)$ 
where $F,G\in C^0_{per}((0,X);\mathbb{C}^3)$
%ENDCHANGED
and $A(\delta,\lambda)=A_{0}(\lambda)+O(\delta)$ where $A_0$ is given
in \eqref{kdvsystem}. Further, we denote by $R(\cdot,\lambda,\delta)$ the resolvent matrix associated
to $Y'=AY$. It is a clear consequence of the regularity of the flow associated to this latter
differential system that $R(\cdot,\lambda,\delta)$ is regular with respect to $(\lambda,\delta)$
and expands as $R(\cdot,\lambda,\delta)=R_{kdv}(\cdot,\lambda)+O(\delta)$ where $R_{kdv}$ is the
resolvent matrix of the linearized KdV equation $Y'=A_0Y$ satisfying the initial condition
$R_{kdv}(0,\lambda)=Id_{\mathbb{C}^3}$.
In order to simplify the notation in the forthcoming calculations,
we now drop the $(\lambda,\delta)$ dependence of resolvent matrices.

Next, we seek to construct a basis of solutions of \eqref{evans5} valid for $0<\delta\ll 1$.
To this end, notice that by Duhamel's formula the system \eqref{evans5} can be equivalently written as
\begin{equation*}
\begin{array}{ll}
\displaystyle
Y(\omega)=R(\omega)Y(0)+\delta^2\int_0^{\omega}w_2(\eta)R(\omega)R^{-1}(\eta)F(\eta)d\eta,\\
\displaystyle
w_2(\omega)=\exp\left(-\int_0^{\omega}\frac{1+O(\delta^2)}{\delta}dq\right)w_2(0)
   +\delta\int_0^{\omega}\exp\left(-\int_\eta^{\omega}\frac{1+O(\delta^2)}{\delta}dq\right)G^T(\eta)Y(\eta)d\eta
\end{array}
\end{equation*}
where here $\frac{1+O(\delta^2)}{\delta}$ denotes a fixed analytic function of $q$ of the specified order: for
definiteness, we denote this function by $\mu(q)$.
As a first step, we build a set of $3$ eigenvectors which are continuations of the eigenvectors
of the linearized KdV equation. For that purpose, we set $w_2(0)=0$ and write $Y$ as
\begin{equation}\label{fix_point}
\displaystyle
Y(\omega)=R(\omega)Y(0)+\delta^3\int_0^\omega \int_0^\eta
  \exp\left(-\int_\zeta^\eta\mu(q)dq\right)G^T(\zeta)Y(\zeta)R(\omega)R^{-1}(\eta)F(\eta)d\zeta d\eta.
\end{equation}
%CHANGED-MJ2/28/12 Changed 2X to X
By applying a fixed point argument in 
%$L^\infty_{\rm per}([-X,X])$ 
$L^\infty_{\rm per}([0,X])$ 
%ENDCHANGED
to \eqref{fix_point}, we 
find a set of three eigenvectors
$(Y_i,w_{2,i})_{i=1,2,3}$ of \eqref{evans5} given by $Y_i=R(\xi)e_i+O(\delta^3)$ with
$e_{i,j}=\delta_{i,j}$ and $w_{2,i}=O(\delta)$.
To find a fourth linearly independent eigenvector of \eqref{evans5}, we seek
a solution $(Y,w_2)=(Y_4,w_{2,4})$ such that
%CHANGED-MJ2/28/12 Changed 2X to X
%\begin{equation}\label{w24}
%w_2=\exp\left(\int_{\omega}^{2X}\mu(q)dq\right)
%    \left(1+\delta\int_0^\omega \exp\left(-\int_\eta^{2X}\mu(q)dq\right)G^T(\eta)Y(\eta)d\eta\right);
%\end{equation}
\begin{equation}\label{w24}
w_2=\exp\left(\int_{\omega}^{X}\mu(q)dq\right)
    \left(1+\delta\int_0^\omega \exp\left(-\int_\eta^{X}\mu(q)dq\right)G^T(\eta)Y(\eta)d\eta\right);
\end{equation}
%ENDCHANGED
in particular, notice then that $w_2(0)\neq 0$.
%CHANGED-MJ2/28/12 Changed 2X to X
%Choosing $Y(0)$ then so that $Y(2X)=0$ gives
Choosing $Y(0)$ then so that $Y(X)=0$ gives
%\begin{equation}\label{Yeqn}
%\begin{aligned}
%&\exp\left(-\int_\omega^{2X}\frac{1+O(\delta^2)}{\delta}dq\right)Y(\omega)=
%-\delta^2\int_{\omega}^{2X} \exp\left(-\int_\eta^\omega\mu(q)dq\right)R(\omega)R^{-1}(\eta)F(\eta)d\zeta d\eta\\
%&\quad\quad\quad\quad\quad\quad
%+\delta\int_0^{\eta}\exp\left(-\int_\zeta^{2X}\mu(q)dq\right)G(\zeta)Y(\zeta)R(\omega)R^{-1}(\eta)F(\eta)d\zeta d\eta.
%\end{aligned}
%\end{equation}
\begin{equation}\label{Yeqn}
\begin{aligned}
&\exp\left(-\int_\omega^{X}\frac{1+O(\delta^2)}{\delta}dq\right)Y(\omega)=
-\delta^2\int_{\omega}^{X} \exp\left(-\int_\eta^\omega\mu(q)dq\right)R(\omega)R^{-1}(\eta)F(\eta)d\zeta d\eta\\
&\quad\quad\quad\quad\quad\quad
+\delta\int_0^{\eta}\exp\left(-\int_\zeta^{X}\mu(q)dq\right)G(\zeta)Y(\zeta)R(\omega)R^{-1}(\eta)F(\eta)d\zeta d\eta.
\end{aligned}
\end{equation}
%ENDCHANGED
We then apply a fixed point argument in weighted space 
%CHANGED-MJ2/28/12 Changed 2X to X
%$e^{-\int_\xi^{2X}\frac{1+O(\delta^2)}{\delta}} L^{\infty}(0,2X)$
$e^{-\int_\xi^{X}\frac{1+O(\delta^2)}{\delta}} L^{\infty}(0,X)$
%ENDCHANGED
to \eqref{Yeqn} to obtain a solution $Y$ such that
%CHANGED-MJ2/28/12 Changed 2X to X
%\[
%\displaystyle
%\exp\left(-\int_\xi^{2X}\mu(q)dq\right)Y(\omega)=
%    -\delta^2\int_{\omega}^{2X} \exp\left(-\int_\eta^\xi\mu(q)dq\right)R(\xi)R^{-1}(\eta)F(\eta)d\eta+O(\delta^3).
%\]
\[
\displaystyle
\exp\left(-\int_\xi^{X}\mu(q)dq\right)Y(\omega)=
    -\delta^2\int_{\omega}^{X} \exp\left(-\int_\eta^\xi\mu(q)dq\right)R(\xi)R^{-1}(\eta)F(\eta)d\eta+O(\delta^3).
\]
%ENDCHANGED
Substituting this solution $Y(\omega)$ into \eqref{w24} completes the basis of solutions
of \eqref{evans5} for $0<\delta\ll 1$.

With the above preparations, we are now ready to expand the Evans function in $\delta$.
At $\xi=0$, the resolvent matrix $\mathcal{R}$ of \eqref{evans5} reads
%CHANGED-MJ2/28/12 Changed 2X to X
%\[
%\mathcal{R}(0,\lambda,\delta)=\left(\begin{array}{cc} Id_{\mathbb{C}^3} & \exp\left(\int_0^{2X}\mu(q)dq\right)O(\delta^2)\\
% 0 & \exp\left(\int_0^{2X}\mu(q)dq\right)(1+O(\delta))
% \end{array}\right)
%\]
\[
\mathcal{R}(0,\lambda,\delta)=\left(\begin{array}{cc} Id_{\mathbb{C}^3} & \exp\left(\int_0^{X}\mu(q)dq\right)O(\delta^2)\\
 0 & \exp\left(\int_0^{X}\mu(q)dq\right)(1+O(\delta))
 \end{array}\right)
\]
whereas at $\xi=2X$, it reads
% \[
% \mathcal{R}(2X,\lambda,\delta)=\left(\begin{array}{cc}\displaystyle R(2X,\lambda,\delta) & 0\\
% \displaystyle O(\delta) & 1+O(\delta)\end{array}\right).
%\]
 \[
 \mathcal{R}(X,\lambda,\delta)=\left(\begin{array}{cc}\displaystyle R(X,\lambda,\delta) & 0\\
 \displaystyle O(\delta) & 1+O(\delta)\end{array}\right).
\]
%ENDCHANGED
Therefore, it follows that
%CHANGED-MJ2/28/12 Changed 2X to X
\begin{align*}
%&E(\lambda,\xi,\delta)={\rm det}\Big(\mathcal{R}(2X,\lambda,\delta)-e^{2i\xi X }\mathcal{R}(0,\lambda,\delta)\Big)\\
%&\quad=
%-(1+O(\delta))e^{2i\xi X}\exp\left(\int_0^{2X}\mu(q)dq\right)
%\left({\rm det}\Big(R(2X,\lambda,\delta)-e^{2i\xi X}Id_{\mathbb{C}^3}\Big)+O(\delta^3)\right),
&E(\lambda,\xi,\delta)={\rm det}\Big(\mathcal{R}(X,\lambda,\delta)-e^{i\xi X }\mathcal{R}(0,\lambda,\delta)\Big)\\
&\quad=
-(1+O(\delta))e^{i\xi X}\exp\left(\int_0^{X}\mu(q)dq\right)
\left({\rm det}\Big(R(X,\lambda,\delta)-e^{i\xi X}Id_{\mathbb{C}^3}\Big)+O(\delta^3)\right),
\end{align*}
%ENDCHANGED
where we have expanded the Evans function with respect to the last column of the determinant to
obtain the final equality.
Recalling that $R(\cdot,\lambda,\delta)=R_{kdv}(\cdot,\lambda)+O(\delta)$, the proposition follows.
\end{proof}

By now considering the equation $E(\lambda,\xi,\delta)=0$ for $0<\delta\ll 1$ and applying an appropriate
implicit function argument, we deduce that for each fixed $\xi\in[-\pi/X,\pi/X)$ the eigenvalues
of Bloch operator $L_\xi$ expand analytically in $\delta$ as $\delta\to 0$.

\begin{corollary}\label{c:ev-analytic}
%CHANGED-MR : of
Let $\xi\in[-\pi/X,\pi/X)$ be fixed and let $\lambda_\delta(\xi)$ be an eigenvalue 
%of 
of $L_\xi$ such that $\lim_{\delta\to 0}\lambda_\delta(\xi)=\lambda_0(\xi)$.  Then
for $0<\delta\ll 1$ the eigenvalue $\lambda_\delta(\xi)$ can be expanded as
$$
\displaystyle
\lambda_\delta(\xi)=\lambda_0(\xi)+\delta\lambda_1(\xi)+\delta^2\lambda_2(\xi)+O(\delta^3).
$$
\end{corollary}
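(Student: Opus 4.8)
The plan is to characterize the eigenvalue $\lambda_\delta(\xi)$ as a zero of the Evans function and to track that zero by the analytic implicit function theorem. Fix $\xi\in[-\pi/X,\pi/X)$. For fixed $(\xi,\delta)$ the eigenvalues of $L_\xi$ lying in a compact set $K$ are exactly the roots in $K$ of $E(\cdot,\xi,\delta)$, and by Proposition~\ref{p:evansexpand} we may replace $E$ by the reduced Evans function
\[
\mathcal{E}(\lambda,\delta):=\det\!\big(R(X,\lambda,\delta)-e^{i\xi X}\mathrm{Id}_{\mathbb{C}^3}\big),
\]
obtained after dividing out the nonvanishing analytic prefactor; its zeros coincide with those of $E$. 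The structural point furnished by the proof of Proposition~\ref{p:evansexpand} is that the fast (hyperbolic) coordinate $w_2$ contributes only to that factored-out exponential, so that $\mathcal{E}$ depends on the slow resolvent $R(X,\lambda,\delta)=R_{kdv}(X,\lambda)+O(\delta)$ alone. Since $R$ solves a regular (slow) linear system depending analytically on $(\lambda,\delta)$ up to and including $\delta=0$, the function $\mathcal{E}$ is jointly analytic in $(\lambda,\delta)$ on a neighborhood of $(\lambda_0(\xi),0)$, with $\mathcal{E}(\lambda,0)=E_{kdv}(\lambda,\xi)$.

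First I would establish that $\lambda_0(\xi)$ is a \emph{simple} root of $E_{kdv}(\cdot,\xi)$, i.e.
\[
\partial_\lambda\mathcal{E}(\lambda_0(\xi),0)=\partial_\lambda E_{kdv}(\lambda_0(\xi),\xi)\neq 0.
\]
This is precisely where the explicitly solved spectral problem for the linearized KdV equation about the cnoidal wave \eqref{kdvsoln} is used: for $\xi$ fixed with $\lambda_0(\xi)\neq 0$, the results of \cite{BD,Sp} identify the KdV Bloch eigenvalues and show they are algebraically simple, whence $\lambda_0(\xi)$ is a simple zero of the associated Evans function $E_{kdv}(\cdot,\xi)$. (The exceptional point $\xi=0$, at which $\lambda=0$ has higher multiplicity, is excluded here and treated separately by the small-Floquet analysis of Section~\ref{sec4}.)

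With simplicity in hand, I would apply the analytic implicit function theorem to $\mathcal{E}(\lambda,\delta)=0$ near $(\lambda_0(\xi),0)$: it yields a unique analytic branch $\delta\mapsto\lambda_\delta(\xi)$ with $\lambda_\delta(\xi)\to\lambda_0(\xi)$ as $\delta\to0$. By the uniqueness clause, this branch coincides for $0<\delta\ll1$ with the given convergent family of eigenvalues, and its convergent Taylor expansion in $\delta$ gives
\[
\lambda_\delta(\xi)=\lambda_0(\xi)+\delta\lambda_1(\xi)+\delta^2\lambda_2(\xi)+O(\delta^3),
\]
as claimed; equivalently, inserting this ansatz into $\mathcal{E}(\lambda_\delta,\delta)=0$ and matching powers of $\delta$ determines the coefficients successively through $\lambda_j(\xi)=-\big(\partial_\lambda E_{kdv}(\lambda_0,\xi)\big)^{-1}\times(\text{lower-order data})$.

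The main obstacle is the simplicity of the limiting root together with the joint analyticity at $\delta=0$. Simplicity is the genuinely content-bearing hypothesis: were $\lambda_0(\xi)$ a multiple zero of $E_{kdv}(\cdot,\xi)$, the implicit function theorem would fail and one would generically obtain only a Puiseux expansion in fractional powers of $\delta$ rather than the clean integer-power expansion asserted. Thus the essential input is the integrable-systems computation of \cite{BD,Sp} guaranteeing that, away from the origin, the limiting KdV eigenvalues are simple. The analyticity at $\delta=0$, by contrast, is a soft consequence of the Fenichel reduction carried out in Proposition~\ref{p:evansexpand}, once one notes that the singular contributions have been isolated in the nonvanishing prefactor, leaving only the slow resolvent $R(X,\lambda,\delta)$, which is analytic through $\delta=0$.
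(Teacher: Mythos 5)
Your argument for the case $\lambda_0(\xi)\neq 0$ (and more generally $\xi\neq 0$) coincides with the paper's: joint analyticity of the reduced Evans function in $(\lambda,\delta)$ through $\delta=0$ from the Fenichel reduction of Proposition \ref{p:evansexpand}, simplicity of the limiting root from \cite{BD,Sp}, and the analytic implicit function theorem. However, you have excluded exactly the case the corollary must also cover and that the paper proves within this corollary: $\xi=0$ with $\lambda_0(0)=0$, where $\lambda=0$ is a \emph{triple} root of $E_{kdv}(\cdot,0)$. This is not a peripheral case that can be deferred to Section \ref{sec4}: the expansion \eqref{neutralexpand} of the third neutral branch at $\xi=0$, and hence the key coefficient $\lambda_1$ in \eqref{neutralbreak}, is justified in Section \ref{s:evexpand} precisely by invoking Corollary \ref{c:ev-analytic} at $\xi=0$. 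Moreover, Section \ref{sec4} studies the joint limit $(\lambda,\xi,\delta)\to(0,0,0)$, not the fixed-$\xi=0$ expansion in $\delta$ of the branch emanating from the triple root, so your deferral does not close the gap. As your own remark about Puiseux series makes clear, a bare triple root would generically produce an expansion in powers of $\delta^{1/3}$, so without further input the asserted integer-power expansion could simply be false in this case.

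The missing idea is the structural fact that two of the three roots remain pinned at $\lambda=0$ for \emph{every} $\delta>0$: translation invariance gives $U'\in{\rm Ker}\,L_0$ and the conservative structure gives $L_0(1)=-U'$, so $\lambda=0$ is a root of $E(\cdot,0,\delta)$ of multiplicity (at least) two uniformly in $\delta$. One may therefore write $E(\lambda,0,\delta)=\lambda^2\bigl(\tilde E_{kdv}(\lambda)+O(\delta)\bigr)$ with $\tilde E_{kdv}(\lambda)=\lambda^{-2}E_{kdv}(\lambda,0)$; since the original root was triple, $\tilde E_{kdv}(0)=0$ with $\partial_\lambda \tilde E_{kdv}(0)\neq 0$, i.e.\ the third branch is a \emph{simple} root of the quotient, and the implicit function theorem applies to it exactly as in the first case. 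You should add this factorization step to complete the proof.
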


\begin{proof}
It is a consequence of the Fenichel-type reduction of \eqref{sppb} conducted above and the regularity of
the flow of the reduced linearized problem that the Evans function is a smooth function
of $(\lambda,\xi,\delta)$ on any compact subset of $\mathbb{C}\times\mathbb{R}\times\mathbb{R}_+^*$.
For $\xi\neq 0$, the eigenvalue $\lambda_0(\xi)$ is an isolated root of $E(\cdot,\xi,0)=E_{kdv}(\cdot,\xi)$
so that, one has $\partial_\lambda E(\lambda_0(\xi),\xi)\neq 0$; see \cite{BD} for more details.
A straightforward application of the implicit function theorem implies that $\lambda_{\delta}(\xi)$ expands as
$\lambda_\delta(\xi)=\lambda_0(\xi)+\delta\lambda_1(\xi)+\delta^2\lambda_2(\xi)+O(\delta^3)$.  A similar
argument holds when $\lambda_0(0)\neq 0$.

Now, let us consider the eigenvalue $\lambda_0(0)=0$ of the linearized KdV equation.
Notice then that by translation invariance and conservation of mass, coming from the conservative
structure of \eqref{kdv-ks}, $\lambda_\delta(0)=0$ is a root of multiplicity two of
$E(\lambda,0,\delta)$ for all $\delta>0$.  Thus, the Evans function at $\xi=0$ can be expressed as
$E(\lambda,0,\delta)=\lambda^2(\tilde{E}_{kdv}(\lambda)+O(\delta))$ for all $|\lambda|$ sufficiently
small and $0<\delta\ll 1$, where here $\tilde{E}_{kdv}(\lambda)=\lambda^{-2}E_{kdv}(\lambda,0)$.
Then $\lambda=0$ is an isolated root of $\tilde{E}_{kdv}$ with $\partial_\lambda\tilde{E}_{kdv}(0)\neq 0$,
so that we can apply the implicit function theorem again and conclude as in the first case.
\end{proof}

\begin{remark}\label{r:bn}
Notice that the expansion of the eigenvalues provided by Corollary \ref{c:ev-analytic} is precisely the one that is
assumed to exist in the work of Bar and Nepomnyashchy in \cite{BN}.  Note, however, that
this expansion is only valid for $0<\delta\ll \xi$ and, moreover, is only
a uniform asymptotic expansion for $|\xi|\geq \eta>0$, where $\eta$ is an arbitrarily small real number.
As a result, all calculations using
an expansion of the form given in Corollary \ref{c:ev-analytic}
are valid only in this restricted regime and, in particular, are not valid in a sufficiently small neighborhood
of the origin in the spectral plane.
\end{remark}

\mathversion{bold}
\subsection{Expansion of eigenvalues as $\delta\to 0$}\label{s:evexpand}
\mathversion{normal}

In the proof of Proposition \ref{p:evansexpand} we obtained an asymptotic expansion for $0<\delta\ll 1$ of the periodic Evans function
for \eqref{sppb} up to $O(\delta^3)$.  However, an explicit expansion of the eigenvalues of such a spectral
problem is 
%CHANGED-MJ2/28/12 removed too
%often too 
often
%ENDCHANGED 
complicated to obtain by
analytic Evans function techniques.
As an alternative, here we fix a Bloch wave number $\xi\in[-\pi/X,\pi/X)$ and search
{\it directly} for an expansion of the $L^\infty(\RM)$ eigenvalues $\lambda_\delta(\xi)$
and eigenfunctions $u=u(\cdot;\delta,\xi)$
in the form
\begin{equation}\label{blochexpand}
\left\{\begin{aligned}
       &\lambda_{\delta}(\xi)=\lambda_0(\xi)+\delta \lambda_1(\xi)+\delta^2 \lambda_2(\xi)+O(\delta^3)\\
       &u(\cdot;\delta,\xi)=u_0(\cdot;\xi)+\delta u_1(\cdot;\xi)+\delta^2 u_2(\cdot;\xi)+O(\delta^3);
       \end{aligned}\right.
\end{equation}
note that such expansions are guaranteed to exist by Corollary \ref{c:ev-analytic} and the Dunford Calculus.
Now, recall that the spectral problem \eqref{sppb} for the operator $L$ can be written as
%CHANGED-MJ2/28/12 Changed 2X to X
\begin{equation}\label{skdvks}
\left\{\begin{aligned}
      &u'''+\big((U-c)u\big)'+\delta\big(u''+u''''\big)+\lambda u=0\\
%      &u(x+2X)=e^{2i\xi X}u(x),
       &u(x+X)=e^{i\xi X}u(x),
      \end{aligned}\right.
\end{equation}
with 
%$u\in L^2_{\rm per}([0, 2X])$.
$u\in L^2_{\rm per}([0, X])$.
%ENDCHANGED  
For $\delta=0$, it is known by the results of \cite{BD}
that the spectrum lies on the imaginary axis and it is parameterized by
\[
\Im\lambda=\pm 8\sqrt{|\eta-\eta_1||\eta-\eta_2||\eta-\eta_3|},\quad \eta\in(-\infty, \eta_1]\cup[\eta_2, \eta_3],
\]
where  $\eta_1=k^2-1, \eta_2=2k^2-1$ and $\eta_3=k^2$ and $k$ is the elliptic modulus associated to
the underlying elliptic function solution $U\big{|}_{\delta=0}$ of the KdV equation for this particular
period $X$.
%CHANGED-MR : is
Moreover, the Bloch wave number $\xi$
% is 
can be written as
$$
\displaystyle
\xi=\frac{N\pi}{2K(k)}\pm\frac{\sqrt{|\eta-\eta_1||\eta-\eta_2||\eta-\eta_3|}}{K(k)}\int_0^{K(k)}\frac{dy}{\eta-k^2+{\rm dn}(y,k)}
$$
for some $N\in\mathbb{N}$.

Before beginning our analysis of the perturbation expansion \eqref{blochexpand}, we make some preliminary remarks
concerning the spectrum of the linearized KdV operator.  
Let
\[
\mathcal{L}:=L\big{|}_{\delta=0}=-\partial_x\big(U_0-c_0\big)-\partial_x^3
\]
denote the linearized KdV operator,
considered as a closed densely defined operator on $L^2(\RM)$, and let $\{\mathcal{L}_\xi:\xi\in[-\pi/X,\pi/X)\}$
denote the associated family of Bloch operators defined on 
%CHANGED-MJ2/28/12 Changed 2X to X
%$L^2_{\rm per}([0,2X])$.
$L^2_{\rm per}([0,X])$.  
%ENDCHANGED
By the results
of \cite{BD}, $\sigma(\mathcal{L})=\RM i$ corresponding to spectral stability of the underlying
cnoidal wave solution $U_0$.  Furthermore, each $\lambda\in\RM i\setminus\{0\}$ is in the spectrum of $\mathcal{L}$
with multiplicity either $1$ or $3$, in the sense that there exists either a unique $\xi\in[-\pi/X,\pi/X)$
such that $\lambda\in\sigma(\mathcal{L}_{\xi})$ (corresponding to multiplicity $1$)
%CHANGED-MR : s
or else there exist three distinct such $\xi$ (corresponding to multiplicity $3$).
Thus, when expanding such eigenvalues for a fixed $\xi$ one is essentially doing simple perturbation theory.
On the other hand, $\lambda=0$ is an eigenvalue of the Bloch operator $\mathcal{L}_0$, corresponding to $\xi=0$,
with algebraic multiplicity three and geometric multiplicity two.  Indeed, one can easily verify
that ${\rm Ker}(\mathcal{L}_0)$ is two dimensional and $1\in{\rm Ker}(\mathcal{L}_0^2)$;
see \cite{BrJ,BrJK}.  Thus, a separate analysis will be necessary when considering the bifurcation
of the neutral modes of $\mathcal{L}_0$ for $0<\delta\ll 1$.

We now begin our perturbation analysis by considering the continuation of a fixed $\lambda_0\in\RM i\setminus\{0\}$.
By above, there exists either one or three distinct Bloch wave numbers $\xi$ such that $\lambda_0\in\sigma(\mathcal{L}_\xi)$.
Let $m(\lambda_0)\in\{1,3\}$ denote the multiplicity of $\lambda_0$, as defined above, and let
$N(\lambda_0)=\{(\xi_j,\hat{u}_{0,j});j=1,\ldots,m(\lambda_0)\}$ denote the set of distinct Bloch wave numbers
$\xi_j$ associated to $\lambda_0$ together with a corresponding
function 
%CHANGED-MJ2/28/12 Changed 2X to X
%$\hat{u}_{0,j}\in L^2_{\rm per}([0,2X])$
$\hat{u}_{0,j}\in L^2_{\rm per}([0,X])$
%ENDCHANGED
in the null-space of the operator $\mathcal{L}_{\xi_j}-\lambda_0$.
We fix such a pair $(\xi_j,\hat{u}_{0,j})\in N(\lambda_0)$ and set $\lambda_0(\xi_j)=\lambda_0$, $u_0(\cdot;\xi_j)=\hat{u}_{0,j}$, and insert
the expansions \eqref{kdvsolnexpand} and \eqref{blochexpand} into \eqref{skdvks}.  Collecting the $O(\delta^0)$ terms we find that $u_0$ must satisfy
%CHANGED-MJ2/28/12 Changed 2X to X
%\[
%\left\{\begin{aligned}
%&u_0'''+\big((U_0-c_0)u_0\big)'+\lambda_0 u_0=0\\
%&u_0(x+2X)=e^{2i\xi_j X}u_0(x),
%\end{aligned}\right.
%\]
\[
\left\{\begin{aligned}
&u_0'''+\big((U_0-c_0)u_0\big)'+\lambda_0 u_0=0\\
&u_0(x+X)=e^{i\xi_j X}u_0(x),
\end{aligned}\right.
\]
%endchanged
which clearly holds by our choice of $(\lambda_0,u_0)$.
Continuing the expansion, identifying the $O(\delta^1)$ terms implies that $u_1(\cdot;\xi_j)$ and $\lambda_1(\xi_j)$
must satisfy %we identify $O(\delta)$ term:
%CHANGED-MJ2/28/12 Changed 2X to X
%\begin{equation}\label{o1}
%\left\{\begin{aligned}
%&u_1'''+\big((U_0-c_0)u_1\big)'+\lambda_0 u_1+\lambda_1 u_0+(U_1 u_0)'+u_0''+u_0''''=0\\
%&u_1(x+2X)=e^{2i\xi_j X}u_1(x),
%\end{aligned}\right.
%\end{equation}
\begin{equation}\label{o1}
\left\{\begin{aligned}
&u_1'''+\big((U_0-c_0)u_1\big)'+\lambda_0 u_1+\lambda_1 u_0+(U_1 u_0)'+u_0''+u_0''''=0\\
&u_1(x+X)=e^{i\xi_j X}u_1(x),
\end{aligned}\right.
\end{equation}
%ENDCHANGED
where here the function $U_1$ is defined as in Proposition \ref{p:kdvsolnexpand}.
To analyze the solvability of \eqref{o1} we
consider the operator $\mathcal{L}_{0,j}[u]=u'''+\big((U_0-c_0)u\big)'$ defined
for all 
%CHANGED-MJ2/28/12 Changed 2X to X
%$u\in H^3(0, 2X)$ such that $u(x+2X)=e^{2i\xi_j X}u(x)$,
$u\in H^3(0,X)$ such that $u(x+X)=e^{i\xi_j X}u(x)$,
%ENDCHANGED
and note then that %.  Then the operator
the operator $\mathcal{L}_{0,j}+\lambda_0$ is Fredholm of index $0$ on 
%CHANGED-MJ2/28/12 Changed 2X to X
%$H^3(0,2X)$.
$H63(0,X)$.
%ENDCHANGED
In particular, we have % with
${\rm Range}(\mathcal{L}_{0,j}+\lambda_0)={\rm Ker} \left((\mathcal{L}_{0,j}+\lambda_0)^*\right)^{\bot}$,
where the adjoint operator of $\mathcal{L}_{0,j}+\lambda_0$ is 
%CHANGED-MR : removed
%clearly 
given by
\[
(\mathcal{L}_{0,j}+\lambda_0)^*=-\partial_x^3-(U_0-c_0)\partial_x-\lambda_0,
\]
defined here for all 
%CHANGED-MJ2/28/12 Changed 2X to X
%$u\in H^3(0,2X)$ such that $u(x+2X)=e^{2i\xi_j X}u(x)$.
$u\in H^3(0,X)$ such that $u(x+X)=e^{i\xi_j X}u(x)$.
%ENDCHANGED
Notice that if
%CHANGED-MJ I think what is meant is that if \lambda_0\neq 0, the construction below works...
%$\xi_j\neq 0$ we easily obtain ${\rm Ker}(\mathcal{L}_{0,j}+\lambda_0)^*$
$\lambda_0\neq 0$ we easily obtain ${\rm Ker}(\mathcal{L}_{0,j}+\lambda_0)^*$
%ENDCHANGED
via the following construction: assuming that $\hat{u}_{0,j}\in{\rm Ker}(\mathcal{L}_{0,j}+\lambda_0)$, one can easily
verify that the function 
%CHANGED-MJ2/28/12 Changed 2X to X
%$\hat{v}_{0,j}(x)=\int_x^{x+2X}\hat{u}_{0,j}(s)ds$ is nontrivial, lies in ${\rm Ker}(\mathcal{L}_{0,j}+\lambda_0)^*$,
%and satisfies the boundary condition $\hat{v}_{0,j}(x+2X)=e^{2i\xi_j X}\hat{v}_{0,j}(x)$.
$\hat{v}_{0,j}(x)=\int_x^{x+X}\hat{u}_{0,j}(s)ds$ is nontrivial, lies in ${\rm Ker}(\mathcal{L}_{0,j}+\lambda_0)^*$,
and satisfies the boundary condition $\hat{v}_{0,j}(x+X)=e^{i\xi_j X}\hat{v}_{0,j}(x)$.
%ENDCHANGED
Thus, for any such $\lambda_0$ and associated Bloch wave number $\xi_j\in[-\pi/X,\pi/X)$, we obtain a complete
basis of ${\rm Ker}(\mathcal{L}_{0,j}+\lambda_0)^*$.
%CHANGED-MJ changed as above...
%In the case $\xi_j=0$, however,
In the case $\lambda_0=0$, corresponding to $\xi_j=0$, however,
%ENDCHANGED
this construction yields only constant functions:
indeed, one readily finds as a consequence of the conservative structure of the KdV equation \eqref{kdv} that $1\in{\rm Ker} \mathcal{L}_0^{*}$.
%(this is a consequence of the conservativity of the KdV equation).
However, we also note by translational invariance of \eqref{kdv} that $U_0\in{\rm Ker}\mathcal{L}_0^{*}$.
In this case, we have again found a complete basis of ${\rm Ker}\mathcal{L}_0^*$ since ${\rm Ker}\mathcal{L}_0^*$ is two dimensional.

With the above preparations, we can now obtain an explicit formula for the $O(\delta)$ correction
of the eigenvalue $\lambda_{\delta}$ given in \eqref{blochexpand}.
First, we consider the case where $\lambda_0\neq 0$ and fix a $\xi_j$ such that $\lambda_0\in\sigma(\mathcal{L}_{\xi_j})$.
Then fixing $\hat{u}_{0,j}\in{\rm Ker}(\mathcal{L}_{0,j}+\lambda_0)$ and setting 
%CHANGED-MJ2/28/12 Changed 2X to X
%$\hat{v}_{0,j}(x)=\int_x^{x+2X}\hat{u}_{0,j}(s)ds$
$\hat{v}_{0,j}(x)=\int_x^{x+X}\hat{u}_{0,j}(s)ds$
%ENDCHANGED
as above, it follows by the Fredholm alternative that
equation \eqref{o1} has a solution provided the compatibility condition
\begin{equation}\label{lambda1eqn}
\left< \lambda_1 \hat{u}_{0,j}+(U_1\hat{u}_{0,j})'+\hat{u}_{0,j}''+\hat{u}_{0,j}'''';\,\hat{v}_{0,j}\right>=0
\end{equation}
is satisfied, where here $\langle\cdot,\cdot\rangle$ denotes the standard (sesquilinear) inner product on 
%CHANGED-MJ2/28/12 Changed 2X to X
%$L^2(0,2X)$.
$L^2_{\rm per}([0,X])$.
%ENDCHANGED
We now give an expression for $\lambda_1$ with respect to functions $\hat{v}_{0,j}$.  To this end, note by definition
we have the identity
%CHANGED-MJ2/28/12 Changed 2X to X
%\[
%\hat{v}_{0,j}'(x)=\hat{u}_{0,j}(x+2X)-\hat{u}_{0,j}(x)=(e^{2i\xi_j X}-1)\hat{u}_{0,j}(x),
%\]
\[
\hat{v}_{0,j}'(x)=\hat{u}_{0,j}(x+X)-\hat{u}_{0,j}(x)=(e^{i\xi_j X}-1)\hat{u}_{0,j}(x),
\]
%ENDCHANGED
from which it follows that
%CHANGED-MJ2/28/12 Changed 2X to X
%\[
%\left<\hat{u}_{0,j}; \hat{v}_{0,j}\right>=\frac{1}{e^{2i\xi_j X}-1}\int_0^{2X}\hat{v}_{0,j}'\bar{\hat{v}}_{0,j}dx=\frac{e^{-i\xi_j X}}{2\sin(\xi_j X)}\Im\left(\int_0^{2X}\hat{v}_{0,j}'\bar{\hat{v}}_{0,j}dx\right).
%\]
\[
\left<\hat{u}_{0,j}; \hat{v}_{0,j}\right>=\frac{1}{e^{i\xi_j X}-1}\int_0^{X}\hat{v}_{0,j}'\bar{\hat{v}}_{0,j}dx=\frac{e^{-i\xi_j X/2}}{2\sin(\xi_j X/2)}\Im\left(\int_0^{X}\hat{v}_{0,j}'\bar{\hat{v}}_{0,j}dx\right).
\]
%ENDCHANGED STOPPED!!!
Similar computations yield the following identities:
%CHANGED-MJ2/28/12 Changed 2X to X
$$
%\begin{array}{lll}
%\displaystyle
%\left<\big(U_1\hat{u}_{0,j}\big)'; \hat{v}_{0,j}\right>=2ie^{-i\xi_j X}\sin(\xi_j X)\int_0^{2X} U_1|\hat{u}_{0,j}|^2dx,\\
%\displaystyle
%\left< \hat{u}_{0,j}''; \hat{v}_{0,j}\right>=-2e^{-i\xi_j X}\sin(\xi_j X)\Im\left(\int_0^{2X} \hat{u}_{0,j}'\bar{\hat{u}}_{0,j}dx\right),\\
%\displaystyle
%\left< \hat{u}_0''''; \hat{v}_{0,j}\right>=2e^{-i\xi_j X}\sin(\xi_j X)\Im\left(\int_0^{2X}\hat{u}_{0,j}''\bar{\hat{u}}_{0,j}'dx\right).
%\end{array}
\begin{array}{lll}
\displaystyle
\left<\big(U_1\hat{u}_{0,j}\big)'; \hat{v}_{0,j}\right>=2ie^{-i\xi_j X/2}\sin(\xi_j X/2)\int_0^{X} U_1|\hat{u}_{0,j}|^2dx,\\
\displaystyle
\left< \hat{u}_{0,j}''; \hat{v}_{0,j}\right>=-2e^{-i\xi_j X/2}\sin(\xi_j X/2)\Im\left(\int_0^{X} \hat{u}_{0,j}'\bar{\hat{u}}_{0,j}dx\right),\\
\displaystyle
\left< \hat{u}_0''''; \hat{v}_{0,j}\right>=2e^{-i\xi_j X/2}\sin(\xi_j X/2)\Im\left(\int_0^{X}\hat{u}_{0,j}''\bar{\hat{u}}_{0,j}'dx\right).
\end{array}
$$
%endchanged
\noindent
Taking real and imaginary parts of \eqref{lambda1eqn}, assuming 
%CHANGED-MJ2/28/12 Changed 2X to X
%$\int_0^{2X}\hat{v}_{0,j}'\bar{\hat{v}}_{0,j}dx\neq 0$
$\int_0^{X}\hat{v}_{0,j}'\bar{\hat{v}}_{0,j}dx\neq 0$
%ENDCHANGED
we can identify the real part and imaginary part of $\lambda_1$ via the relations
%CHANGED-MJ2/28/12 Changed 2X to X
%\begin{equation}\label{rl1}
%\begin{array}{ll}
%\displaystyle
%\Im\left(\int_0^{2X}\hat{v}_{0,j}'\bar{\hat{v}}_{0,j}dx\right)\Re(\lambda_1)=
%    \Im\left(\int_0^{2X}\hat{v}_{0,j}''\bar{\hat{v}}_{0,j}'-\hat{v}_{0,j}'''\bar{\hat{v}}_{0,j}''dx\right),\\
%\displaystyle
%\Im\left(\int_0^{2X}\hat{v}_{0,j}'\bar{\hat{v}}_{0,j}dx\right)\Im(\lambda_1)=-\int_0^{2X} U_1|\hat{v}_{0,j}'|^2dx.
%\end{array}
%\end{equation}
\begin{equation}\label{rl1}
\begin{array}{ll}
\displaystyle
\Im\left(\int_0^{X}\hat{v}_{0,j}'\bar{\hat{v}}_{0,j}dx\right)\Re(\lambda_1)=
    \Im\left(\int_0^{X}\hat{v}_{0,j}''\bar{\hat{v}}_{0,j}'-\hat{v}_{0,j}'''\bar{\hat{v}}_{0,j}''dx\right),\\
\displaystyle
\Im\left(\int_0^{X}\hat{v}_{0,j}'\bar{\hat{v}}_{0,j}dx\right)\Im(\lambda_1)=-\int_0^{X} U_1|\hat{v}_{0,j}'|^2dx.
\end{array}
\end{equation}
%ENDCHANGED
Note that the $O(\delta)$ correction $U_1$ of the underlying periodic profile $U$ only contributes, up to $O(\delta)$,
to the imaginary part of $\lambda$. Furthermore, this contribution clearly vanishes by parity.
Indeed, note that 
%CHANGED-MJ2/28/12 Changed 2X to X
%$|\hat{v}_{0,j}'|^2=\sin^2(\xi_j X)(1+|\lambda_0^{-1}U_0'|^2)$
$|\hat{v}_{0,j}'|^2=\sin^2(\xi_j X/2)(1+|\lambda_0^{-1}U_0'|^2)$
%ENDCHANGED
is an even function whereas, by Proposition
\ref{p:kdvsolnexpand}, $U_1$ is an odd function.  As these functions are both $2X$-periodic, assuming
again that 
%CHANGED-MJ2/28/12 Changed 2X to X
%$\int_0^{2X}\hat{v}_{0,j}'\bar{\hat{v}}_{0,j}dx\neq 0$ 
$\int_0^{X}\hat{v}_{0,j}'\bar{\hat{v}}_{0,j}dx\neq 0$
the integral which defines $\Im(\lambda_1)$ then vanishes,
implying that $\Im(\lambda_1)=0$: note that this is coherent with the computations in \cite{BN}.
As a result, we have obtained an expansion valid up to order $O(\delta^2)$ for any eigenvalue $\lambda$ such that $\lambda|_{\delta=0}\neq 0$
and 
%$\int_0^{2X}\hat{v}_{0,j}'\bar{\hat{v}}_{0,j}dx\neq 0$.
$\int_0^{X}\hat{v}_{0,j}'\bar{\hat{v}}_{0,j}dx\neq 0$.
%ENDCHANGED

\begin{remark}
In \cite{BN}, the authors numerically evaluate the expressions in \eqref{rl1} and find, in particular, that
for any fixed $\lambda_0\in\RM i\setminus\{0\}$ that
the condition 
%CHANGED-MJ2/28/12
%$\int_0^{2X}\hat{v}_{0,j}'\bar{\hat{v}}_{0,j}dx\neq 0$ 
$\int_0^{X}\hat{v}_{0,j}'\bar{\hat{v}}_{0,j}dx\neq 0$
%ENDCHANGED
holds for each $j=1,\ldots,m(\lambda_0)$.
Thus, there is no loss of generality in making this assumption above.
\end{remark}

When $\lambda_0=0$, the construction of the expansion is slightly different. Let us first remark that $\lambda_0=0$ is an eigenvalue of the linearized KdV equation which is triply covered but associated to the unique Floquet coefficient $\xi=0$. Indeed the kernel of
the Bloch operator $\mathcal{L}_0$ %, restricted to $2X$-periodic functions,
is two dimensional, spanned by the functions $v_1=U'_0$ and
\[
v_2=\partial_M U_0-\frac{\partial_M c_0}{\partial_k c_0}\partial_k U_0\ =\ 1-[\partial_k c_0]^{-1}\partial_k U_0\ .
\]
Furthermore, it is readily checked that $\mathcal{L}_0(1)=-v_1$, hence $\lambda_0=0$ is an eigenvalue of $\mathcal{L}_0$
with algebraic multiplicity three and geometric multiplicity two; see \cite{BrJ,BrJK} for more details.
%Then the geometric multiplicity of $0$ is $2$ and we note that $1\in{\rm Ker}\mathcal{L}_{0}^2$ so that the algebraic multiplicity is $3$.
Similarly, we remark that when $\delta\neq 0$ we have $U'\in{\rm Ker}L_0$, due to the translation invariance of \eqref{kdv-ks},
and that $L_0(1)=-U'$.  As a result, we expect zero to be an eigenvalue of $L_0$ of algebraic multiplicity two for all $\delta\neq 0$.
Our goal now is to determine an asymptotic expansion of the third neutral eigenvalue of the operator $L_0\big{|}_{\delta=0}=\mathcal{L}_0$
for $0<\delta\ll 1$.

To this end, we continue the vector space spanned by $\{v_j\}_{j=1,2}$, defined above, for
$0<\delta\ll 1$.  We begin by recalling Corollary 
%CHANGED-MR : eq
\ref{c:ev-analytic} and expanding the corresponding eigenvalues and eigenvectors
of $L_0$ as
\begin{equation}\label{neutralexpand}
\lambda=\delta\lambda_1+\delta^2\lambda_2+O(\delta^3),\quad u=\hat{u}_0+\delta\hat{u}_1+\delta^2\hat{u}_2+O(\delta^3),
\end{equation}
where here we expect generically $\lambda_1\neq 0$.
Substituting these expansions into the spectral problem $L_0 u=\lambda u$, considered here on 
%CHANGED-MJ2/28/12 Changed 2X to X
%$L^2_{\rm per}([0,2X])$,
$L^2_{\rm per}([0,X])$,
%ENDCHANGED
we find that collecting the $O(\delta^0)$ terms yields $\mathcal{L}_0[\hat{u}_0]=0$.
Thus, for some constants $A_1^0,A_2^0\in\CM$ we can write $\hat{u}_0=A_1^0 v_1+A_2^0 v_2$.  Similarly,
identifying the $O(\delta)$ terms yields the equation
\begin{equation}\label{contl0}
\displaystyle
\mathcal{L}_0[\hat{u}_1]+\lambda_1\hat{u}_0+(U_1\hat{u}_0)'+\hat{u}_0''+\hat{u}_0''''=0.
\end{equation}
\noindent
Recalling that $\mathcal{L}_0$ is Fredholm of index $0$ on 
%CHANGED-MJ2/28/12
%$H^3_{\rm per}([0,2X])$
$H^3_{\rm per}([0,X])$
%Endchanged
with ${\rm Ker}\mathcal{L}_0^*={\rm span}\{1,U_0\}$, it follows that
equation \eqref{contl0} has a solution provided the solvability conditions
$$
\begin{array}{ll}
\displaystyle
\langle\lambda_1\hat{u}_0+(U_1\hat{u}_0)'+\hat{u}_0''+\hat{u}_0'''',1\rangle=0\\
\displaystyle
\langle\lambda_1\hat{u}_0+(U_1\hat{u}_0)'+\hat{u}_0''+\hat{u}_0'''',U_0\rangle=0,\\
\end{array}
$$
hold.
More explicitly, using the parity of $\hat{u}_0$ and $U_0$ the above solvability conditions reduce to
\[
\lambda_{1}A_2^0=0,\quad \left(\lambda_{1}\langle v_2,U_0\rangle+\langle (U_1v_2)'+v_2''+v_2'''',U_0\rangle\right)A_2^0=0.
\]
To avoid $\lambda_1=0$, we find $A_2^0=0$ and $\hat{u}_1=A_1^0 (U_1'-\lambda_1)+A_1^1 v_1+A_2^1 v_2$ for some
constants $A_1^1,A_2^1\in\CM$ where now we require $A_1^0\neq 0$.

Next, we consider $O(\delta^2)$ terms in \eqref{neutralexpand} which, upon substitution into the
spectral problem $L_0u=\lambda u$, must satisfy the equation
\begin{equation}\label{delta2ev}
\mathcal{L}_0[\hat{u}_2]+\lambda_1\hat{u}_1+\lambda_2\hat{u}_0+\left(U_1\hat{u}_1+(U_2-c_2)\hat{u}_0\right)'+\hat{u}_1''+\hat{u}_1''''=0
\end{equation}
where $U_1$ and $U_2$ represent, respectively, the $O(\delta)$ and $O(\delta^2)$ corrections of the underlying wave profile $U$:
see Proposition \ref{p:kdvsolnexpand}.
Using the representations of $\hat{u}_0$ and $\hat{u}_1$ determined above, equation \eqref{delta2ev} can be
written as
{\setlength\arraycolsep{1pt}
\begin{eqnarray}
\displaystyle
\mathcal{L}_0[\hat{u}_2]&+&A_1^0\left(-\lambda_1^2+\lambda_2U_0'+(U_1U_1'+(U_2-c_2)U_0')'+U_1'''+U_1'''''\right)\nonumber\\
%\label{equ2h}
\displaystyle
&+&A_2^1\Big(\lambda_1 v_2+(U_1v_2)'+v_2''+v_2''''\Big)+A_1^1\Big(\lambda_1 v_1+(U_1v_1)'+v_1''+v_1''''\Big)=0\nonumber
\end{eqnarray}}
\noindent
or, more compactly, as %Equation \eqref{equ2h} is also written as
\begin{equation}\label{equ2h}
\mathcal{L}_0[\hat{u}_2+A_1^0(\lambda_2+U_2')+A_1^1(\lambda_1+U_1')]-\lambda_1^2A_1^0+(\lambda_1 v_2+(U_1v_2)'+v_2''+v_2'''')A_2^1=0.
\end{equation}
\noindent
Using the Fredholm alternative again, we find that equation \eqref{equ2h} has a solution provided the solvability conditions
$$
\displaystyle
-\lambda_1^2A_1^0+\lambda_1 A_2^1=0,\quad \left(\lambda_1\langle v_2,U_0\rangle+\langle (U_1v_2)'+v_2''+v_2''''; U_0\rangle\right)A_2^1=0
$$
are satisfied.  In particular, notice that these solvability conditions provide no requirement for the constant $A_1^1$.
Simplifying, we have thus obtained the following dispersion relation
\begin{equation}\label{neutraldisp}
\lambda_1^2\left(\lambda_1\langle v_2,U_0\rangle+\langle (U_1v_2)'+v_2''+v_2''''; U_0\rangle\right)=0
\end{equation}
defining the $O(\delta)$ corrector $\lambda_1$ in \eqref{neutralexpand}.
As a result $\lambda_1=0$ is a solution and it is of multiplicity $2$ corresponding to
the Jordan block of height $2$ (up to order $O(\delta^2)$).
In this case, one has necessarily $A_2^1=0$ and a corresponding eigenfunction expands as $u=A_1^0 U'+O(\delta^2)$.
Provided $\langle v_2,U_0\rangle\neq 0$, the third solution of the dispersion relation \eqref{neutraldisp}
is given by
\begin{equation}\label{neutralbreak}
\lambda_1=-\frac{\langle (U_1v_2)'+v_2''+v_2''''; U_0\rangle}{\langle v_2,U_0\rangle}\in\mathbb{R}.
\end{equation}
In this case, $A_2^1=\lambda_1 A_1^0$ and an associated eigenvector expands as
$\displaystyle u=U_0'+\delta\big(U_1'-\lambda_0+\lambda_0 v_2\big)+O(\delta^2)$.

As a consequence of the above analysis, which is a rigorous version of the formal analysis provided in \cite{BN},
for a fixed $\xi\in[-\pi/X,\pi/X)$ we have explicit expressions, given by \eqref{rl1} and \eqref{neutralbreak},
for the eigenvalues of the Bloch operator $L_\xi$
as they bifurcate from the eigenvalues of the associated Bloch operator $\mathcal{L}_\xi$ for the KdV equation.
In \cite{BN}, the authors numerically evaluate these expressions for each fixed $\xi$ using standard elliptic function calculations:
the details of these calculations are provided in Appendix \ref{s:bn}.  In particular, the authors of \cite{BN} find
that for each fixed $\xi$ the $O(\delta)$ correctors $\lambda_1$ are strictly negative
for wave trains of \eqref{kdv-ks} having periods lying in the interval $[8.49,26.17]$, indicative of spectral stability
of the associated wave trains.  However, as described in Remark \ref{r:bn} this analysis is only valid for $0<\delta\ll \xi$
and, furthermore, the expansions assumed above are only uniform for $\xi$ bounded away from zero.  As a result,
the previous analysis is not sufficient to conclude spectral stability of a given periodic traveling wave $U$
of \eqref{kdv-ks} for any fixed $\delta>0$.  To make such a conclusion, delicate analysis in a neighborhood of the origin
in the spectral plane is needed: this is the objective of the next section.

%CHANGED-MR : staility
Finally, we conclude this section by connecting the above analysis with the nonlinear stability 
%stability
theory developed in \cite{BJNRZ1}.
As described in the introduction, nonlinear 
%CHANGED-RM :removed
%modulational 
stability of the underlying profile $U$
under the nonlinear flow induced by \eqref{kdv-ks} follows by the structural and spectral 
%CHANGED-MR : plural
hypotheses (H1)-(H2) and (D1)-(D3).
%CHANGED-MR : yet
Yet as a consequence of the above analysis, assumptions (H1) and (D3) immediately follow from verifying
that the $O(\delta)$ corrector $\lambda_1$ given in \eqref{neutralbreak} is non-zero.  This observation is
recorded in the following corollary.

\begin{corollary}\label{c:h1d3}
Assume that the number $\lambda_1$ defined in \eqref{neutralbreak} is non-zero.
Then $\lambda=0$ is a non-semi simple eigenvalue of the Bloch operator $L_0$: it is of algebraic
multiplicity two and geometric multiplicity one with ${\rm Ker}L_{\xi=0}={\rm span}\{U'\}$ and
$1\in{\rm Ker} L_{\xi=0}^2\setminus{\rm Ker} L_{\xi=0}$.  Furthermore, the return map 
%CHANGED-MR : order
$H:\R^6\to \R^3$, $(X,b,c,q)\mapsto (u,u',u'')(X,b,c,q)-b$ (where $(u,u',u'')(.,b,c,q)$ is solution of
\[
\delta(u'+u''')+u''+\frac{u^2}{2}-cu=q,\quad u(0,b,c,q)=b)
\]
is full rank at $(\bar X,\bar b,\bar c,\bar q)$.
\end{corollary}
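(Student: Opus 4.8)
The plan is to read off the spectral statement from the neutral-mode analysis leading to \eqref{neutraldisp}--\eqref{neutralbreak}, and then to deduce the full-rank (H1) statement from the conservation-law structure of $L$. For the first part, note that for every $\delta\ge0$ translation invariance gives $L_0U'=0$, while writing $L=-\partial_x\mathcal{A}$ with $\mathcal{A}v:=(U-c)v+v''+\delta(v'+v''')$ (so that \eqref{eq:pr} linearizes to $\mathcal{A}v=0$) yields $L_0(1)=-\partial_x\mathcal{A}(1)=-U'$. Hence $1\in{\rm Ker}\,L_0^2\setminus{\rm Ker}\,L_0$, so $\lambda=0$ always carries a height-two Jordan chain $\{U',1\}$ and has algebraic multiplicity at least two. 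The hypothesis $\lambda_1\neq0$ is precisely the statement that the remaining factor of \eqref{neutraldisp} has a nonzero root $\lambda_1$, so the corresponding eigenvalue $\delta\lambda_1+O(\delta^2)$ leaves the origin; thus, of the three eigenvalues bifurcating from the triple root of $\mathcal{L}_0$ at $\delta=0$, only the double root pinned at $\lambda=0$ survives, and for $0<\delta\ll1$ the algebraic multiplicity is exactly two and the geometric multiplicity exactly one, with ${\rm Ker}\,L_0={\rm span}\{U'\}$.

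For the full-rank statement I would again exploit $L=-\partial_x\mathcal{A}$. Writing \eqref{eq:pr} as a first-order system in $Y=(u,u',u'')$ and letting $\mathcal{M}$ be the monodromy over one period, one has $\partial_bH=\mathcal{M}-{\rm Id}$ and $DH=[\,\partial_XH\mid\mathcal{M}-{\rm Id}\mid\partial_cH\mid\partial_qH\,]$. The initial data of periodic solutions of $\mathcal{A}v=0$ are exactly ${\rm Ker}(\mathcal{M}-{\rm Id})$; since $\mathcal{A}U'=0$ and, by the first part, these periodic solutions span only $\{U'\}$, the block $\mathcal{M}-{\rm Id}$ has rank two and one-dimensional cokernel. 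Thus $DH$ is surjective if and only if the essentially unique left null vector $\psi_0$ of $\mathcal{M}-{\rm Id}$ fails to annihilate one of the three remaining columns.

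The heart of the argument is then the single pairing $\psi_0^{T}\partial_qH$. Realizing $\psi_0=Z(0)$ for the periodic solution $Z$ of the adjoint variational system $Z'=-B^{T}Z$, a direct elimination shows that its last component $Z_3$ solves the formal adjoint equation $\mathcal{A}^{*}Z_3=0$; that is, $Z_3$ is the periodic cokernel element $w_\ast$ of $\mathcal{A}$. Because $\partial_q$ of the profile vector field equals $(0,0,\delta^{-1})^{T}$, Duhamel's formula together with the identity $\psi_0^{T}\mathcal{M}(X,s)=Z(s)^{T}$ gives $\psi_0^{T}\partial_qH=\delta^{-1}\int_0^{X}w_\ast\,dx$. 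Finally, geometric simplicity ${\rm Ker}\,L_0={\rm span}\{U'\}$ means that $\mathcal{A}v=m$ admits no periodic solution for $m\neq0$, which by the Fredholm alternative for $\mathcal{A}$ is exactly the condition $\int_0^{X}w_\ast\,dx\neq0$. Hence $\psi_0^{T}\partial_qH\neq0$, so $\partial_qH\notin{\rm Range}(\mathcal{M}-{\rm Id})$ and $DH$ has rank three.

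The main obstacle, I expect, is the bookkeeping that reconciles the fourth-order operator $L_0$ with the third-order monodromy $\mathcal{M}$: the factorization $L=-\partial_x\mathcal{A}$ introduces an integration-constant degree of freedom, and the point of the proof is that varying exactly this constant (the $\partial_qH$ column) supplies the rank that $\mathcal{M}-{\rm Id}$ lacks. Making this precise requires the component identity $Z_3=w_\ast$ and the careful Fredholm translation of geometric simplicity into $\int_0^X w_\ast\,dx\neq0$, and it is here that the structural identity $L=-\partial_x\mathcal{A}$ and the non-degeneracy $\lambda_1\neq0$ (via the first part) must be used in concert; the remaining manipulations are routine.
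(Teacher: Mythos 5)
Your argument is correct. For the spectral half you follow exactly the paper's route: the Jordan chain $L_0U'=0$, $L_0(1)=-U'$ pins a height-two block at the origin, and the hypothesis $\lambda_1\neq 0$ expels the third eigenvalue bifurcating from the triple root of $\mathcal{L}_0$, so the generalized kernel is exactly ${\rm span}\{U',1\}$ --- this is what the paper's proof calls ``the above considerations.'' For the full-rank half the paper gives no argument at all: it asserts that full rank follows from the algebraic multiplicity being two and refers to \cite{NR2,JNRZ1}. You instead supply a complete, self-contained proof, and it checks out: $\partial_b H=\mathcal{M}-{\rm Id}$ has rank two because ${\rm Ker}(\mathcal{M}-{\rm Id})$ consists of initial data of periodic solutions of $\mathcal{A}v=0$, which lie in ${\rm Ker}\,L_0={\rm span}\{U'\}$; eliminating $Z_1,Z_2$ from the adjoint variational system $Z'=-B^TZ$ does yield $(U-c)Z_3+Z_3''-\delta(Z_3'+Z_3''')=\mathcal{A}^*Z_3=0$, so the left null vector pairs with the $q$-column to give $\delta^{-1}\int_0^X w_\ast$; and since the cokernel of $\mathcal{A}$ on periodic functions is one-dimensional (index zero plus the one-dimensional kernel), the Fredholm alternative converts geometric simplicity of $\lambda=0$ --- no periodic solution of $\mathcal{A}v=m$ for $m\neq 0$ --- into $\int_0^X w_\ast\,dx\neq 0$, whence $\partial_qH\notin{\rm Range}(\mathcal{M}-{\rm Id})$ and $DH$ has rank three. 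The implication you actually establish is ``geometric simplicity $\Rightarrow$ (H1),'' which is precisely the concrete content of the nondegeneracy equivalence the paper imports from its references; your version buys a proof in place of a citation, at the cost of the monodromy/adjoint bookkeeping, which you have handled correctly.
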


\begin{proof}
The non-semi simplicity of the zero-eigenspace of $L_0$ follows by the above considerations.
The fact that the return map is full rank is a consequence of the fact that $0$ is of
algebraic multiplicity $2$: see 
%CHANGED-MR :NR2
\cite{NR2,JNRZ1} for more details.
\end{proof}

\section{\label{sec4}Spectrum at the origin and modulation equations}

As described above, the computations carried out in Section \ref{s:evexpand}, which justify the formal approach in \cite{BN}, are only valid for $|\xi|\geq\eta>0$ and $\delta\to 0$, where $\eta>0$ is arbitrary.  In particular, it can be used to provide a first estimate of stability boundaries
as any instabilities undetected would correspond to long-wavelength perturbations.
Indeed it is verified numerically in \cite{BN}, based on an expansion of eigenvalues similar to the one carried out in the previous section,
%CHANGED-MJ don't think this is correct...
%that the spectral stability assumption (D1) is satisfied for $\xi\neq 0$
that $\sigma(L_\xi)\subset\{\lambda\in\RM i:{\rm Re}(\lambda)<0\}$ holds for all $|\xi|\geq\eta>0$, $\eta>0$ sufficiently small,
%ENDCHANGED
%CHANGED-MJ2/28/12
%for $2X\in[L_1, L_2]$
for $X\in[L_1, L_2]$
%ENDCHANGED
with $L_1\approx 8.49$ and $L_2\approx 26.17$: see \cite{BN}
%CHANGEDKZ:
or Appendix \ref{s:bn}
%ENDCHANGED
for more details. 
%CHANGED-MR : heavy formulation
%Here, 
However, one can not conclude directly to spectral stability 
%as it is done in \cite{BN}
since 
%the analysis for $|\xi|\ll \delta$ is not carried out there: in particular,
%spectral stability in a sufficiently
%small neighborhood of the origin does not follow by the above analysis.
the above analysis does not rule out the presence of unstable spectrum in a sufficiently small neighborhood of the origin.
%ENDCHANGED
Nevertheless, we point out that, somewhat surprisingly, these bounds found in \cite{BN}
are approximately those found through a direct numerical analysis of the spectral problem conducted recently in \cite{BJNRZ1}.
In this section, we complete the stability analysis initiated in the previous section by studying stability
of low Bloch numbers $|\xi|\leq\eta$ for $\eta$ sufficiently small.  In the process of verifying the spectral
stability hypothesis (D1), required for the nonlinear stability result of \cite{BJNRZ1} to apply,
we also prove rigorously the hypotheses (H2) (``hyperbolicity'') and (D2) (``dissipativity'').  As a result, in conjunction with Corollary \ref{c:h1d3} and the numerical results of \cite{BN}, our results indicate\footnote{Our results do not prove the existence of such solutions since it
%CHANGED-MR :still
still relies on the numerical results of \cite{BN}.  As previously indicated, making these numerics rigorous via numerical proof would be an interesting direction for future investigation.}
the existence of nonlinearly stable periodic traveling wave solutions of \eqref{kdv-ks} in the sense of that defined in \cite{BJNRZ1}.

%CHANGED-MR : capitals, () and appears
\begin{figure}[htbp]\label{fig1}
\begin{center}
\input{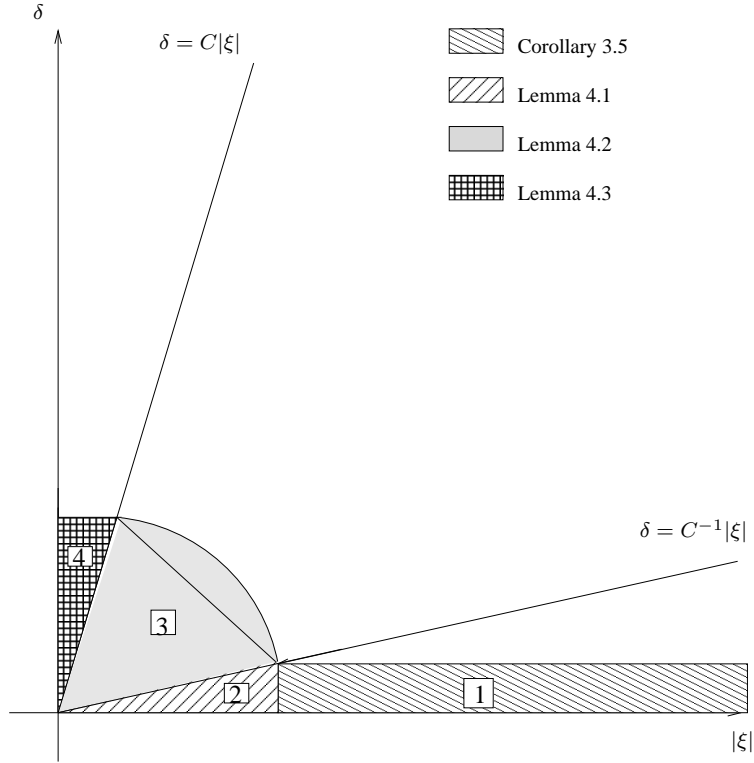}
\caption{ Domains in the $(|\xi|,\delta)$ plane where it is proved that there are no unstable eigenvalues. The sector is described by $C^{-1}|\xi|\leq\delta\leq C|\xi|$ with $C\gg 1$. The zone $1$ corresponds to the domain of validity of \cite{BN}. The zone $2$ corresponds to Lemma \ref{lem_kdv}: it is an extension of the analysis of \cite{BN} and provides subcharacteristic conditions (S1), (S2), (S3) of stability. In zone $3$, Lemma \ref{lem_int} shows that no eigenvalue can cross the imaginary axis, thus no unstable eigenvalue appears there.  In zone $4$, Lemma \ref{lem_wh} proves that there are no unstable eigenvalues if subcharacteristic conditions (S1), (S2), (S3) are satisfied. One concludes to spectral stability for $\delta\leq\delta_0$ for $\delta_0$ sufficiently small.}
\end{center}
\end{figure}

\subsection{Spectral analysis through Evans function computations}\label{s:specevans}

We begin our study of the spectrum of the linearized operator $L$ in a neighborhood of the origin by analyzing
the periodic Evans function $E(\lambda,\xi,\delta)$ for $|(\lambda,\xi,\delta)|_{\CM\times\RM\times\RM}\ll 1$.
Recall from Proposition \ref{p:evansexpand} that, after a suitable renormalization, the Evans function expands as
\[
E(\lambda,\xi,\delta)=E_{kdv}(\lambda,\xi)+\delta E_1(\lambda,\xi)+O\left(\delta^2(\xi^2+\lambda^2)\right)
\]
for sufficiently small $\delta>0$.  
%ADDED-P.N. At some place, it should be said that the principal in the Taylor expansion of E_kdv is an homogeneous polynom of degree 3 and for E_1 it is degree 2.
%CHANGED-MJ12 wording...
%Note that due to the multiplicity of the root $\lambda=0$ at $\xi=0$, the principal part of $E_{kdv}$ in its Taylor expansion with respect to $(\lambda,\xi)$ is an homogeneous polynom of degree 3 whereas $E_1$ has a homogeneous polynom of degree 2as a principal part in its Taylor expansion about $(0,0)$.\\
Note that due to the algebraic multiplicity of the root $\lambda=0$ of $E_{kdv}(\cdot,0)=0$,  
the principal part of $E_{kdv}$ in its Taylor expansion with respect to $(\lambda,\xi)$ 
is an homogeneous polynomial of degree 3, whereas $E_1$ has a homogeneous polynomial of degree 2 as 
a principal part in its Taylor expansion about $(0,0)$.
%ENDCHANGED
%END ADDED

Restricting to $|(\lambda,\xi)|_{\CM\times\RM}\ll 1$, the
Weierstrass Preparation Theorem yields an expansion of the form
\[
E(\lambda,\xi,\delta)=\Gamma(\lambda,\xi)(\lambda-i\alpha_1(\xi)\xi)(\lambda-i\alpha_2(\xi)\xi)(\lambda-i\alpha_3(\xi)\xi)+\delta E_1(\lambda,\xi)+O\left(\delta^2(\lambda^2+\xi^2)\right),
\]
%CHANGED-MR : is
where $\Gamma$ is an analytic function such that $\Gamma(0,0)\neq 0$ and the numbers $i\alpha_j(\xi)\xi$
are the roots of the associated Evans function $E_{kdv}(\cdot,\xi)$ for the linearized KdV equation.
Using primarily the above asymptotic expansion of the periodic Evans function,
the description of the spectrum of $L$ near the origin is done in three steps.
First, we show that the computations carried out in Section \ref{s:evexpand} remain
valid in a sector of the form $0<\delta\leq\epsilon_0|\xi|$ and $|\xi|\leq\eta_0$, for some $0<\epsilon_0,\eta_0\ll 1$.
Next, we show that eigenvalues of $L_\xi$, considered here as a family of operators on 
%CHANGED-MJ2/28/12
%$L^2_{\rm per}([0,2X])$
$L^2_{\rm per}([0,X])$
%ENDCHANGED
indexed by $\xi$, can not cross the imaginary axis, except at $\xi=0$, in a sector of the form
$C^{-1}|\xi|\leq \delta\leq C|\xi|$ and $|\xi|\leq\eta_C$, where here $\eta_C>0$ is small and $C\geq 1$ is arbitrary.  Finally, we
consider a sector $|\xi|\leq\epsilon_0\delta$ and show that $\Re(\lambda(\xi,\delta))\leq-\theta(\delta)\xi^2$
%CHANGED-MR : added
if some subcharacteristic conditions are met.

For $(\lambda,\xi)$ sufficiently small, we divide the Evans function by $\Gamma(\lambda,\xi)\neq 0$ and
expand it with respect to $\lambda,\xi,\delta$ as
\begin{equation}\label{crit-evansexpand}
E(\lambda,\xi,\delta)=\prod_{j=1}^3(\lambda-i\alpha_j(\xi)\xi)+\gamma\delta\prod_{k=1}^2(\lambda-i\beta_k^0\xi)+O\left(\delta^2(\lambda^2+\xi^2)+\delta(\lambda^3+\xi^3)\right).
\end{equation}
where $\gamma,a,b\in\mathbb{R}$ are constants, the $\beta_k$ are real or complex conjugate constants, and $\alpha_j(\xi)\in\mathbb{R}$.
Notice that since the spectral curves for the linearized KdV equation obey the symmetry $\lambda(-\xi)=\bar{\lambda}(\xi)$,
it follows that the $\alpha_j$ are even functions of $\xi$.  Furthermore, letting $\xi\to 0$ in $\alpha_j(\xi)$ one
obtains $\alpha_j(0)=\alpha_j^0, j=1,2,3$, where the $\alpha_j^0$ are the eigenvalues of the
Whitham modulation system for Korteweg-de Vries equation: see \cite{BrJ,BrJK,JZB,JZ1} for details.  
Concerning the $\alpha_j^0$, we note that there are well-founded numerical studies demonstrating that these eigenvalues
are distinct for all the KdV cnoidal wave trains, i.e. that the Whitham modulation system for
the KdV equation is strictly hyperbolic at all such solutions; see Section 5.1 of \cite{BrJK} for instance.  
In the present case 
%CHANGED-MJ2/28/12 added footnote
though\footnote{Note that, 
since all quantities are analytic, failure of strict hyperbolicity can occur in any case, if it does, only at isolated periods.},
%ENDCHANGED, 
we find it more appropriate to note
that the numerical results
of Figure \ref{fig2} in Section \ref{s:num} below clearly shows strict
hyperbolicity for the limiting class of KdV cnoidal waves considered here, i.e. those
wave trains of \eqref{kdv} approachable as solutions
of the KdV-KS equation \eqref{kdv-ks} as $\delta\to 0$ described in Proposition \ref{p:kdvsolnexpand}.
Throughout this section, we assume that the $\alpha_j^0$ are distinct and, 
%CHANGED-MR : without
without loss of generality, obey the ordering
\begin{equation}\label{vap_kdv}
\displaystyle
\alpha^0_1<\alpha_2^0<\alpha_3^0.
\end{equation}
As a first step, we prove that the expansions carried out in the previous section,
that are valid for $|\xi|>\eta$ and $|\lambda|>\eta$ for any $\eta>0$ and $\delta\to 0$
extend to a small sector $0<\delta\leq\epsilon|\xi|$ and $|\xi|<\eta$ for 
%CHANGED-MR : \epsilon
$\eta,\epsilon>0$ sufficiently small.
This is the content of the following lemma.

\begin{lemma}\label{lem_kdv}
There exist constants $\epsilon_0,\eta_0>0$ and $M_0>0$ so that for all $0\leq \delta\leq\epsilon_0|\xi|$ and $|\xi|\leq\eta_0$, there are
only three roots $\{\lambda_k(\xi,\delta)\}_{k=1,2,3}$ of
%CHANGED-MJ
%$E(\lambda,\xi,\delta)=0$.
$E(\lambda,\xi,\delta)$ with $|\lambda|\leq M_0$.
%ENDCHANGED
Moreover, these roots are smooth functions of $\xi$ and $\delta/\xi$ and expand as
%CHANGED-MJ wrong...
%$$
%\begin{array}{ll}
%\displaystyle
%\lambda_k(\xi,\delta)=i\alpha_k(\xi)-\gamma\delta\frac{\prod_{j=1}^2(\alpha_k(\xi)-\beta_j^0)}{\prod_{j\neq k}(\alpha_k(\xi)-\alpha_j(\xi))}+O(\delta\xi),\\
%\displaystyle
%\Re(\lambda_k(\xi,\delta))=\delta A_k+O(\delta\xi^2),\quad A_k=-\gamma\frac{\prod_{j=1}^2(\alpha_k^0-\beta_j^0)}{\prod_{j\neq k}(\alpha_k^0-\alpha_j^0)}
%\end{array}
%$$
$$
\begin{array}{ll}
\displaystyle
\lambda_k(\xi,\delta)=i\alpha_k(\xi)\xi-\gamma\delta\frac{\prod_{j=1}^2(\alpha_k(\xi)-\beta_j^0)}{\prod_{j\neq k}(\alpha_k(\xi)-\alpha_j(\xi))}+O(\delta\xi),\\
\displaystyle
\Re(\lambda_k(\xi,\delta))=\delta A_k+O(\delta\xi^2),\quad A_k=-\gamma\frac{\prod_{j=1}^2(\alpha_k^0-\beta_j^0)}{\prod_{j\neq k}(\alpha_k^0-\alpha_j^0)}
\end{array}
$$
%CHANGED-MR : \delta/\xi
for $|(\xi,\delta/\xi)|\ll 1$.
\noindent
One has $A_k<0, k=1,2,3$ if and only if the following conditions are satisfied:
%CHANGED-MR : itemize
\begin{itemize}
\item[{\bf (S1)}] $\beta_1^0,\beta_2^0\in\mathbb{R}$ and $\beta_1^0\neq\beta_2^0$;
\item[{\bf (S2)}] $\alpha_1^0<\beta_1^0<\alpha_2^0<\beta_2^0<\alpha_3^0$ (once we have fixed $\beta_1^0<\beta_2^0$).
\item[{\bf (S3)}] $\gamma>0$;
\end{itemize}
Then, up to a restriction on $\eta_0,\epsilon_0$, $\Re(\lambda_k(\xi,\delta))<0$ for all $0<\delta\leq\epsilon_0|\xi|$ and $|\xi|\leq\eta_0$.
\end{lemma}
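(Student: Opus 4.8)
The plan is to desingularize the corner $(\lambda,\xi,\delta)\to(0,0,0)$ by the homogeneous blow-up $\lambda=\xi z$, $\tau=\delta/\xi$, which renders the leading cubic and the $O(\delta)$ term of \eqref{crit-evansexpand} of the same order and converts the problem into a \emph{regular} perturbation in the two small parameters $(\xi,\tau)$. Substituting $\lambda=\xi z$, $\delta=\tau\xi$ into \eqref{crit-evansexpand} and dividing by $\xi^3$, every term acquires a clean power of $\xi$ and the remainder $O(\delta^2(\lambda^2+\xi^2)+\delta(\lambda^3+\xi^3))$ becomes $O(\tau\xi)$ (it carries a spare factor $\xi$). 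Thus
\[
\tilde E(z,\xi,\tau):=\xi^{-3}E(\xi z,\xi,\tau\xi)=\prod_{j=1}^3\big(z-i\alpha_j(\xi)\big)+\gamma\tau\prod_{k=1}^2\big(z-i\beta_k^0\big)+O(\tau\xi)
\]
is smooth in $(\xi,\tau)$ near the origin and entire in $z$, and at $(\xi,\tau)=(0,0)$ reduces to $\prod_j(z-i\alpha_j^0)$. By the strict ordering \eqref{vap_kdv} the three zeros $z=i\alpha_j^0$ are \emph{simple}, so the implicit function theorem continues them to distinct smooth branches $z_k(\xi,\tau)$; then $\lambda_k(\xi,\delta)=\xi\,z_k(\xi,\delta/\xi)$ are smooth functions of $\xi$ and $\delta/\xi$, as claimed.

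To see that these are \emph{all} the nearby zeros I would run Rouch\'e at two scales. First choose $M_0>0$ small enough that $\lambda=0$ is the only zero of $E_{kdv}(\cdot,0)$ in $\overline{B(0,M_0)}$, possible since it is isolated, of multiplicity three by the triple-covering discussion preceding the lemma. As $E(\cdot,\xi,\delta)\to E_{kdv}(\cdot,0)$ uniformly on $|\lambda|=M_0$ when $(\xi,\delta)\to(0,0)$, Rouch\'e gives exactly three zeros of $E$ in $B(0,M_0)$ once $\eta_0,\epsilon_0$ are small. A second application, in the $z$-plane on a fixed large circle $|z|=Z_0$ enclosing the $i\alpha_j^0$, shows $\tilde E(\cdot,\xi,\tau)$ has exactly three zeros with $|z|<Z_0$ for $(\xi,\tau)$ small; these pull back to zeros $\lambda=\xi z$ with $|\lambda|\le Z_0|\xi|\le M_0$, hence coincide with the three branches $\lambda_k$.

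The expansion then follows from first-order perturbation about $z=i\alpha_k(\xi)$: writing $z_k=i\alpha_k(\xi)+w$ and linearizing $\tilde E$ gives
\[
w=-\gamma\tau\,\frac{\prod_{l=1}^2(\alpha_k(\xi)-\beta_l^0)}{\prod_{j\neq k}(\alpha_k(\xi)-\alpha_j(\xi))}+O(\tau\xi+\tau^2),
\]
the two factors of $i$ in numerator and denominator cancelling; multiplying by $\xi$ recovers the displayed expansion of $\lambda_k$. This correction is \emph{real}: under (S1), or in the complex-conjugate case, $\prod_l(\alpha_k(\xi)-\beta_l^0)$ is real and the denominator is a product of reals. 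The sharp information on $\Re\lambda_k$ comes from the fact that at $\delta=0$ the operator is the linearized KdV operator, whose spectrum is purely imaginary \cite{BD}; hence $\Re z_k(\xi,0)\equiv 0$, so $\Re z_k(\xi,\tau)=\tau\,\tilde r_k(\xi,\tau)$ with $\tilde r_k$ smooth, and therefore
\[
\Re\lambda_k(\xi,\delta)=\xi\,\Re z_k=\delta\,\tilde r_k(\xi,\delta/\xi),\qquad \tilde r_k(0,0)=A_k .
\]
Since $\alpha_k(\xi)=\alpha_k^0+O(\xi^2)$ and $\Re\lambda_k$ is even in $\xi$ (by the symmetry $\lambda(-\xi)=\bar\lambda(\xi)$), the linear-in-$\xi$ correction vanishes and one obtains $\Re\lambda_k=\delta A_k+O(\delta\xi^2)+O(\delta^2)$, the last terms being negligible in the sector $\delta\le\epsilon_0|\xi|$.

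The sign dictionary is then elementary bookkeeping. With \eqref{vap_kdv} the denominators $D_k=\prod_{j\neq k}(\alpha_k^0-\alpha_j^0)$ carry signs $+,-,+$, and $A_k<0$ for every $k$ is equivalent to $\sgn(\gamma N_kD_k)>0$ with $N_k=(\alpha_k^0-\beta_1^0)(\alpha_k^0-\beta_2^0)$. If $\beta_1^0,\beta_2^0$ are complex conjugate then all $N_k>0$ and the three requirements force simultaneously $\gamma>0$ and $\gamma<0$, a contradiction, so (S1) is necessary. For real $\beta_1^0<\beta_2^0$, reading $N(\alpha)=(\alpha-\beta_1^0)(\alpha-\beta_2^0)$ as an upward parabola, the required pattern forces $N_1,N_3>0>N_2$, hence the interlacing $\alpha_1^0<\beta_1^0<\alpha_2^0<\beta_2^0<\alpha_3^0$ together with $\gamma>0$, the case $\gamma<0$ being impossible since $N_1,N_3<0$ would entail $N_2<0$; this is precisely (S1)--(S3). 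Finally, when (S1)--(S3) hold, $\tilde r_k(0,0)=A_k<0$ for each $k$, so by continuity $\tilde r_k<0$ on a box $|\xi|\le\eta_0$, $|\delta/\xi|\le\epsilon_0$ after shrinking $\eta_0,\epsilon_0$; since $\delta>0$ there, $\Re\lambda_k=\delta\,\tilde r_k<0$, which is the assertion. The one genuinely delicate point is the \emph{singular} nature of the blow-up at $\xi=0$: one must check that $\tilde E$ extends to a smooth (regular) perturbation across $\xi=0$, that the two Rouch\'e counts match so that no zero of $E$ in $|\lambda|\le M_0$ is lost in passing to the $z$-variable, and that all estimates are uniform over the sector rather than merely pointwise in $(\xi,\delta)$; everything downstream is routine.
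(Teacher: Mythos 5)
Your proposal is correct and follows essentially the same route as the paper: the rescaling $\lambda=\xi\bar\lambda$, $\delta=\bar\delta\xi$, division by $\xi^3$, the implicit function theorem at the simple roots $i\alpha_j^0$, and the identical sign bookkeeping distinguishing the complex-conjugate and real interlacing cases. The only cosmetic differences are that you count the roots via Rouch\'e where the paper compares polynomial growth in $\bar\lambda$, and you extract the reality of the $O(\delta)$ correction from $\Re z_k(\xi,0)\equiv 0$ rather than directly from the expansion; both are sound.
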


%CHANGED-MR : be
\begin{remark} In what follows, the conditions $(S1),(S2),(S3)$ will be referred to as
``the subcharacteristic conditions'': this terminology will be justified in Section \ref{whitham}
below.  There, we will see that the $\beta_j^0$ are the characteristics of the first order averaged Whitham
modulation equations for \eqref{kdv-ks}.  Hence, condition (S1) above simply states that the Whitham modulation system for \eqref{kdv-ks}, derived for fixed $\delta>0$, about the underlying wave is strictly hyperbolic.  Note that hyperbolicity of this system, corresponding to the requirement that $\beta_j^0\in\RM$, is a well-known necessary condition for spectral stability to weak large-scale perturbations; see \cite{Se,NR2}.  We note furthermore that the condition $(S1)$ is equivalent to the spectral assumption $(H2)$ necessary to invoke the nonlinear stability theory of \cite{BJNRZ1}.
\end{remark}

\begin{proof}
As described above, for $(|\lambda|,|\xi|,\delta)$ sufficiently small the equation $E(\lambda,\xi,\delta)=0$
expands as
%The equation $E(\lambda,\delta,\xi)=0$, reads for $(|\lambda|,|\xi|,\delta)$ sufficiently small as
\begin{equation}\label{ev1}
\displaystyle
\prod_{j=1}^3(\lambda-i\alpha_j(\xi)\xi)+\gamma\delta\prod_{k=1}^2(\lambda-i\beta_k^0\xi)+O\left(\delta^2(\lambda^2+\xi^2)+\delta(\lambda^3+\xi^3)\right)=0,
\end{equation}
\noindent
where $O\left(\delta^2(\lambda^2+\xi^2)+\delta(\lambda^3+\xi^3)\right)$ stands for
an analytic function of $\lambda,\delta,\xi$ of the given order.
Now, dividing \eqref{ev1} by $\xi^3$ and setting $\delta=\bar{\delta}\xi, \lambda=\bar\lambda\xi$
yields the equation
\begin{equation}\label{ev1rescale}
\prod_{j=1}^3(\bar\lambda-i\alpha_j(\xi))+\gamma\bar\delta\prod_{k=1}^2(\bar\lambda- i\beta_k^0)
   =O\left(\xi\bar\delta^2(1+\bar\lambda^3)+\bar\delta\xi(1+\bar\delta^2)\right).
\end{equation}
By comparing polynomial growth in $\bar\lambda$, it follows that there exists
constants $\eta_1,M>0$, such that if $|\bar\delta|+|\xi|<\eta_1$ then $|\bar\lambda|\leq M<\infty$.

Now, letting $\bar\delta\to 0$ in \eqref{ev1rescale} one finds that necessarily
$$
\displaystyle
\prod_{j=1}^3(\bar\lambda-i\alpha_j(\xi))=0.
$$
Since $\alpha_1^0<\alpha_2^0<\alpha_3^0$, the continuity of the $\alpha_j(\cdot)$ implies the existence
of an $\eta_2>0$ such that $\alpha_1(\xi)<\alpha_2(\xi)<\alpha_3(\xi)$ for all $|\xi|\leq\eta_2$.
Thus, applying the implicit function theorem to \eqref{ev1rescale} in a neighborhood of each $\alpha_j(\xi)$
it follows that there exists three roots $\{\bar\lambda_j(\xi,\bar\delta)\}_{j=1,2,3}$ of \eqref{ev1rescale},
defined for $\xi,\bar\delta$ sufficiently small, which
are smooth functions of $\xi$ and $\bar{\delta}$ and can be expanded as
%Thus, one can apply the implicit function in the neighbourhood of each $\alpha_j(\xi)$
%and the three eigenvalues $(\bar\lambda_j(\bar\delta,\xi))_{j=1,2,3}$ expand as
\begin{equation}\label{rescale-critexpansion}
\bar\lambda_j(\xi,\bar\delta)=i\alpha_j(\xi)
    -\gamma\bar{\delta}\frac{(\alpha_j(\xi)-\beta_1^0)(\alpha_j(\xi)-\beta_2^0)}{\prod_{k\neq j}(\alpha_j(\xi)-\alpha_k(\xi))}
        +O(\bar\delta\xi):
\end{equation}
notice here we have used the fact that the $\alpha_j$ are even functions of $\xi$.
Returning to the original variables via $\lambda_j(\xi,\delta)=\xi\bar{\lambda}_j(\xi,\bar{\delta})$,
we obtain the desired regularity and expansions for the critical eigenvalues $\{\lambda_j\}$.

Next, we compute the
%CHANGED-MJ
%imaginary part
real parts
%ENDCHANGED
of the critical eigenvalues $\lambda_j(\xi,\delta)=\xi\bar{\lambda}_j(\bar\delta,\xi)$.
Recalling that the constants $\beta_1^0,\beta_2^0$ are either real or complex conjugate, as well as the fact
that the functions $\Re(\lambda_j(\xi,\delta)),\alpha_j(\xi)$ are even function of $\xi$, it follows by
\eqref{rescale-critexpansion} that
\begin{equation}\label{re_lambda}
\displaystyle
\Re(\lambda_j(\xi,\delta))=-\gamma\delta\frac{(\alpha_j^0-\beta_1^0)(\alpha_j^0-\beta_2^0)}{\prod_{k\neq j}(\alpha_j^0-\alpha_k^0)}+O(\delta\xi^2).
\end{equation}
\noindent
Hence, for $|\xi|$ and $\delta/|\xi|$ sufficiently small, the sign of
$\Re(\lambda_j(\xi,\delta)), j=1,2,3$ is determined by the sign of the real number $A_j$ defined as
\[
A_j=-\gamma\frac{(\alpha_j^0-\beta_1^0)(\alpha_j^0-\beta_2^0)}{\prod_{k\neq j}(\alpha_j^0-\alpha_k^0)},\quad j=1,2,3.
\]
%CHANGED-MR : removed
%In order to have $\Re(\lambda_j(\xi,\delta))<0$ 
One now needs to verify that $A_j<0$ for $j=1,2,3$.

For the moment, let us assume that $A_j<0$ for $j=1,2,3$ and demonstrate that this implies
the conditions (S1), (S2), and (S3) are satisfied.
%let us now assume that it is true and show that conditions (S1), (S2), and (S3) are satisfied.
First, we suppose that $\beta_1^0,\beta_2^0$ are complex conjugates.  In this case our assumption
%CHANGED-MR : added gamma + =|...|^2
on the $A_j$ implies that $\gamma\neq 0$ and $(\alpha_j^0-\beta_1^0)(\alpha_j^0-\beta_2^0)=|\alpha_j^0-\beta_1^0|^2>0$ for each $j$, hence
\[
\sign(A_j)=-\sign(\gamma)\sign(\prod_{k\neq j}(\alpha_j^0-\alpha_k^0)).
\]
Since $\alpha_1^0<\alpha_2^0<\alpha_3^0$, it follows that, contrary to our hypothesis, the $A_j$ can
not have all the same sign.
%\footnote{Moreover, this analysis verifies that if the $\beta_j^0$ are complex
%conjugate, one necessarily has spectral instability of the underlying wave in a sufficiently
%small neighborhood of the origin.}.
Thus, it must be the case that the $\beta_j^0$ are real and distinct, verifying condition (S1).
%CHANGED-MR : without
Taking without loss of generality $\beta_1^0<\beta_2^0$, it is now an easy computation
to show that the signs of $(A_j)_{j=1,2,3}$ are the same if and only if the condition (S2) is satisfied.
%\begin{equation}
%\displaystyle
%\alpha_1^0<\beta_1^0<\alpha_2^0<\beta_2^0<\alpha_3^0.
%\end{equation}
%\noindent
In this case, one has $\sign(A_j)=-\sign(\gamma)$ for $j=1,2,3$ hence, in order to have
$A_j<0$ for each $j$, it follows that condition (S3) must hold.  This verifies that conditions (S1), (S2),
and (S3) hold provided $A_j<0$ 
%CHANGED-MR : fore
for each $j=1,2,3$. Conversely, it is a elementary computation to show that conditions
(S1), (S2), and (S3) imply $A_j<0$ for $j=1,2,3$. This completes the proof of the lemma.
\end{proof}

%{\bf THIS IS HUGE!!!! NEED TO EMPHASIZE PARAGRAPH BELOW AND CLEAN UP!!!!!!!!}

In \cite{BN}, the authors computed numerically a stability index that we denote  $\textrm{Ind}(X)$ here. It is defined as follows:
assume that $\lambda_n(\xi,\delta), n\in\mathbb{N}$ is one of the solutions
 of $E(\lambda,\xi,\delta)=0$ expanding as $\lambda_n(\xi,\delta)=\lambda_n^0(\xi)+\delta\lambda_n^1(\xi)+O(\delta^2)$, with $\lambda_n^0(\xi)$
an eigenvalue of the linearized KdV equation about a $X$-periodic traveling wave. Then ${\rm Ind}(X)$ is defined as
$$
\displaystyle
{\rm Ind}(X)=\max_{\xi\in[-\pi/X,\pi/X), n\in\mathbb{N}}(\Re\lambda_n^1(\xi))),
$$
\noindent
and the authors in \cite{BN} conclude spectral stability if ${\rm Ind}(X)<0$ and instability if ${\rm Ind}(X)>0$. 
However, the previous lemma shows that for 
%CHANGED-MR : |
$(\lambda,\xi)$ sufficiently small the expansion assumed in the definition of $\lambda_n(\xi,\delta)$ is
%CHANGED-MR :a priori
apriori only valid in a sector of the form 
$0<\delta<\epsilon_0|\xi|$ and $\delta<\eta_0|\xi|$, hence one can not conclude spectral stability
%CHANGED-MR : added
from such an expansion,
as is done in \cite{BN}, when $\textrm{Ind}(X)<0$.  However, it is easy to see that the $\{A_j\}$ from Lemma~\ref{lem_kdv} satisfy $A_j\leq {\rm Ind}(X)$ for each $j=1,2,3$.  Hence, for the ``near-KdV" wave trains of \eqref{kdv-ks} satisfying ${\rm Ind}(X)<0$, one deduces that the subcharacteristic conditions (S1),(S2), and (S3) are satisfied so that, in particular, the spectral stability in Zone 1 in Figure \ref{fig1} extends to Zone 2 for such waves. In what follows, we prove the requirement that $\textrm{Ind}(X)<0$ is indeed \textit{sufficient} to conclude to spectral stability of such a periodic wave.

To this end, we must now deal with the range of parameters $\delta>\epsilon_0|\xi|$.
In what follows, we will assume that the conditions (S1)-(S3) are satisfied.  
This last part is done in two steps.  First, we demonstrate that under this assumption no eigenvalues in a sector
of the form $C^{-1}\xi\leq\delta\leq C\xi$ for $C\gg 1$ sufficiently large and $(|\lambda|,|\xi|)$ sufficiently
small can cross the imaginary axis except at the degenerate boundary point $(\xi,\delta)=(0,0)$.  
%Let us first prove the following lemma.
Then we show that in a sector of the form 
%CHANGED-MR : epsilon
$|\xi|\leq \epsilon\delta$ and $(\epsilon,\delta)$ sufficiently small, one again
has spectral stability in a neighborhood of the origin provided the subcharacteristic conditions (S1)-(S3) are satisfied.

\begin{lemma}\label{lem_int}
Assume the conditions (S1), (S2), and (S3) of Lemma \ref{lem_kdv} hold. Then for all $C>1$,
there exists a constant $\eta>0$ such that
the roots of the function $\lambda\mapsto E(\lambda,\xi,\delta)$ with $|\lambda|\leq\eta$
can not cross the imaginary axis for any $|\xi|\leq \eta$ and $\frac{|\xi|}{C}\leq\delta\leq C|\xi|$,
%the roots of the function $E(\lambda,\xi,\delta)$ with $|\lambda|\leq\eta$ can not cross the imaginary axis,
except at $\lambda=\xi=0$.
%CHANGED-mj
%no  eigenvalue $\lambda$, solution of $E(\lambda,\xi,\delta)=0$, $|\lambda|\leq\eta$ can cross the imaginary axis,
%except at $\lambda=\xi=0$.
%ENDCHANGED
\end{lemma}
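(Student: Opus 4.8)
The plan is to reduce the question to the limiting cubic governing the three critical roots and to show, via the strict interlacing (S1)--(S2) together with $\gamma\neq0$ from (S3), that this cubic has no purely imaginary roots once $\bar\delta:=\delta/|\xi|$ is bounded away from $0$. Recall from the Weierstrass factorization recorded just before Lemma~\ref{lem_kdv} that, on a fixed neighborhood of the origin, $E$ differs by a nonvanishing factor from a degree-$3$ Weierstrass polynomial in $\lambda$; hence there are exactly three roots with $|\lambda|$ small, and these are precisely the objects in the statement. By the symmetry $\lambda(-\xi)=\overline{\lambda(\xi)}$ it suffices to treat $\xi>0$, so that in the sector $\bar\delta$ ranges over the compact interval $[C^{-1},C]$. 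First I would introduce the parabolic rescaling $\lambda=\xi\bar\lambda$, $\delta=\xi\bar\delta$ and divide \eqref{crit-evansexpand} by $\xi^3$; using $\delta\asymp\xi$ every error term carries a surplus factor of $\xi$, so the rescaled equation reads
\[
\tilde W(\bar\lambda,\xi,\bar\delta):=\prod_{j=1}^3(\bar\lambda-i\alpha_j(\xi))+\gamma\bar\delta\prod_{k=1}^2(\bar\lambda-i\beta_k^0)+O(\xi)=0,
\]
with the $O(\xi)$ uniform for $\bar\lambda,\bar\delta$ in compact sets. Because $\tilde W$ is, up to an $O(\xi)$ change of its coefficients, the monic cubic obtained by letting $\xi\to0$, its three roots converge as $\xi\to0$ to those of the limiting polynomial $P(\bar\lambda,\bar\delta):=\prod_{j=1}^3(\bar\lambda-i\alpha_j^0)+\gamma\bar\delta\prod_{k=1}^2(\bar\lambda-i\beta_k^0)$; in particular all three small roots of $E$ are $O(\xi)$, confirming that the rescaling captures exactly the roots with $|\lambda|\le\eta$.

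The heart of the argument is then a purely algebraic noncrossing fact for $P$. Evaluating on the imaginary axis $\bar\lambda=is$, $s\in\RM$, gives
\[
P(is,\bar\delta)=-\,i\prod_{j=1}^3(s-\alpha_j^0)-\gamma\bar\delta\prod_{k=1}^2(s-\beta_k^0),
\]
so that $\Im P(is,\bar\delta)=-\prod_{j}(s-\alpha_j^0)$ and $\Re P(is,\bar\delta)=-\gamma\bar\delta\prod_{k}(s-\beta_k^0)$. For $P(is,\bar\delta)$ to vanish both products must vanish simultaneously, forcing $s$ to be both some $\alpha_j^0$ and some $\beta_k^0$; by the strict interlacing $\alpha_1^0<\beta_1^0<\alpha_2^0<\beta_2^0<\alpha_3^0$ of (S2) (using (S1) to have the $\beta_k^0$ real and distinct, and $\gamma\bar\delta\neq0$ from (S3) so that the real part genuinely sees the $\beta$-factor) these two zero sets are disjoint. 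Hence $P(\cdot,\bar\delta)$ has no purely imaginary root for any $\bar\delta>0$. Since the two real polynomials $\prod_j(s-\alpha_j^0)$ and $\prod_k(s-\beta_k^0)$ have separated roots and grow at infinity, I would extract from this a quantitative lower bound: there is $c_1>0$ with $|P(is,\bar\delta)|\ge c_1$ for all $s\in\RM$ and all $\bar\delta\in[C^{-1},C]$, the compactness of the $\bar\delta$-interval and $\gamma\bar\delta\ge|\gamma|C^{-1}>0$ giving the uniformity.

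Finally I would upgrade this to the full (unrescaled) Evans function by a perturbation estimate. If $\tilde W$ had a purely imaginary small root $\bar\lambda=is$, then $|P(is,\bar\delta)|=|O(\xi)|\le K|\xi|$ for a uniform constant $K$, contradicting the lower bound $|P(is,\bar\delta)|\ge c_1$ as soon as $|\xi|<c_1/K$. Choosing $\eta$ correspondingly small, no root of $\tilde W$, and hence no root $\lambda=\xi\bar\lambda$ of $E$ with $|\lambda|\le\eta$, is purely imaginary for $0<|\xi|\le\eta$ in the sector; equivalently $\Re(\lambda_j(\xi,\delta))\neq0$ there, so the roots keep a fixed sign of real part on each connected component and cannot cross the imaginary axis, the only degenerate point being $\lambda=\xi=0$. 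I expect the main obstacle to be bookkeeping the uniformity: one must check that the surplus factor $\xi$ in the error is genuinely uniform over the entire compact range $\bar\delta\in[C^{-1},C]$ and over the imaginary segment $|s|\le M$, and that the Weierstrass reduction indeed accounts for all roots with $|\lambda|\le\eta$ (so that no ``extra'' root with $|\xi|\ll|\lambda|\le\eta$ escapes the rescaling). The algebraic noncrossing step itself is short; it is precisely the subcharacteristic/genuine-coupling mechanism, and it is what ties the conclusion to conditions (S1)--(S3).
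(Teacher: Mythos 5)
Your proposal is correct and follows essentially the same route as the paper: restrict to $\lambda$ on the imaginary axis, separate real and imaginary parts of the expansion \eqref{crit-evansexpand} (which isolate $\gamma\delta\prod_k(\cdot-\beta_k^0)$ and $\prod_j(\cdot-\alpha_j^0)$ respectively), rescale by $\xi$ using $\delta\asymp|\xi|$, and derive a contradiction from the fact that strict interlacing (S2) prevents $\bar\tau$ from being simultaneously close to an $\alpha_j^0$ and a $\beta_k^0$. The paper phrases this as a direct contradiction for a hypothetical crossing root rather than via a uniform lower bound on the limiting cubic, but the mechanism is identical.
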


\begin{proof}
%CHANGED-MR : order
%%CHANGED-MJ
%%Assume that an eigenvalue
Assume that a root $\lambda=\lambda(\xi,\delta)$  of the equation $E(\lambda,\xi,\delta)=0$, with $|\lambda|$ sufficiently small so that the expansion \eqref{crit-evansexpand} is valid, 
%ENDCHANGED
crosses the
imaginary axis as the parameters $\xi$ and $\delta$ are varied in the region
$|\xi|\leq \eta$ and $\frac{|\xi|}{C}\leq\delta\leq C|\xi|$.
Then there exists $\tau=\tau(\delta,\xi)\in\mathbb{R}$
such that $E(i\tau,\xi,\delta)=0$ from which, by identifying the real and imaginary part of this equation
and using \eqref{crit-evansexpand}, one finds
$$
\begin{array}{ll}
\displaystyle
\prod_{j=1}^3(\tau-\alpha_j(\xi)\xi)=O\left(\delta(\tau^3+\xi^3)+\delta^2(\tau^2+\xi^2)\right),\\
\displaystyle
\prod_{j=1}^2(\tau-\beta^0_j\xi)=O\left((\tau^3+\xi^3)+\delta(\tau^2+\xi^2)\right).\\
\end{array}
$$
Dividing the first of these equations by $\xi^3$ and the second one by $\xi^2$,
and setting $\tau=\bar{\tau}\xi$ and using the estimate $|\xi|\leq C\delta$, one finds
\[
\prod_{k=1}^3(\bar{\tau}-\alpha_k(\xi))=O(\delta(\bar\tau^3+1)),\quad \prod_{j=1}^2(\bar\tau-\beta_j^0)=O(\delta(\bar{\tau}^3+1)).
\]
By comparing polynomial growth in $\bar\tau$ in the first equation it follows that $|\bar\tau|\leq M$ for some constant $M$.
Furthermore, since the functions $\alpha_j$ are even in $\xi$ and $|\xi|\leq C\delta$, we have
\[
\prod_{k=1}^3(\bar\tau-\alpha_k^0)=O(\delta),\quad \prod_{k=1}^2(\bar\tau-\beta^k_0)=O(\delta).
\]
Taking $\delta>0$ sufficiently small, one finds a contradiction with condition (S2).
\end{proof}

%CHANGED-MR : formulation + many small things
As a consequence of Lemma \ref{lem_kdv} and Lemma \ref{lem_int}, it follows that
if the conditions (S1), (S2), and (S3) are satisfied, for any 
%$C>1$
$C$ sufficiently large, there exists an $\eta>0$
such that if 
%$\frac{\xi}{C}\leq\delta\leq C\xi\leq\eta$,
$\delta\leq C|\xi|$ and $|\xi|\leq\eta$
any solution $\lambda$ of $E(\lambda,\xi,\delta)=0$ such that $|\lambda|\leq\eta$ and 
$(\lambda,\xi)\neq(0,0)$ must satisfy $\Re(\lambda)<0$. Next, we verify that, under the same conditions, for $\epsilon>0$ small enough, in the sector $\delta\leq\epsilon|\xi|$ and $0\leq|\xi|\leq\epsilon$, all eigenvalues $\lambda$, solutions of $E(\lambda,\xi,\delta)=0$ such that $|\lambda|\leq\epsilon$ and 
$(\lambda,\xi)\neq(0,0)$, have a negative real part.
%ENDCHANGED

\begin{lemma}\label{lem_wh}
There exist constants $\epsilon_0,\eta_0>0$ and $M_0>0$ such that for any $0\leq |\xi|\leq\epsilon_0\delta$ and $0<\delta\leq\eta_0$,
%CHANGED-MJ
%there are only three eigenvalues $(\lambda_j(\xi,\delta))_{j=1,2,3}$, solutions of $E(\lambda,\xi,\delta)=0$.
there are only three roots $\{\lambda_j(\xi,\delta)\}_{j=1,2,3}$ of $E(\lambda,\xi,\delta)$ with $|\lambda|\leq M_0$.
%ENDCHANGED
%CHANGED-MJ need to assume (S1) and \gamma\neq 0
%Moreover,
Moreover, assuming (S1) and $\gamma\neq 0$,
%ENDCHANGED
these roots are smooth functions of $\delta$ and $\xi/\delta$ and their real part
expand as $\Re(\lambda_3(\xi,\delta))=-\gamma\delta+o(\delta)$ and %$\Re(\lambda_3(\xi\delta))\sim-\gamma\delta$ and
$$
%\begin{array}{lll}
\displaystyle
\Re(\lambda_1(\xi,\delta))=\frac{\xi^2}{\gamma\delta}\frac{\prod_{j=1}^3(\beta_1^0-\alpha_j^0)}{(\beta_1^0-\beta_2^0)}+O(\xi^2),\quad
\Re(\lambda_2(\xi,\delta))=\frac{\xi^2}{\gamma\delta}\frac{\prod_{j=1}^3(\beta_2^0-\alpha_j^0)}{(\beta_2^0-\beta_1^0)}+O(\xi^2).
%\quad\Re\lambda_3(\xi,\delta))\sim-\gamma\delta.
%\end{array}
$$
In particular, if conditions (S1), (S2), and (S3) are satisfied,
%CHANGED-MJ
then, up to possibly choosing $(\epsilon_0,\eta_0)$ smaller than above,
%and up to a restriction on $(\epsilon_0,\eta_0)$,
%ENDCHANGED
there exists $\theta>0$ such that for $i=1,2$
$$
\displaystyle
\Re(\lambda_i(\xi,\delta))\leq -\frac{\theta}{\delta}\xi^2,\quad  0\leq|\xi|\leq\epsilon_0\delta,\quad \delta<\eta_0.
$$
\end{lemma}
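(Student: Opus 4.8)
The plan is to treat the wedge $0\le|\xi|\le\epsilon_0\delta$ as the mirror image of the regime handled in Lemma~\ref{lem_kdv}: whereas there the small parameter was $\delta/\xi$, here it is $\bar\xi:=\xi/\delta$. Accordingly, in \eqref{crit-evansexpand} I would set $\lambda=\bar\lambda\,\delta$ and $\xi=\bar\xi\,\delta$ and divide by $\delta^3$. Since the remainder $O(\delta^2(\lambda^2+\xi^2)+\delta(\lambda^3+\xi^3))$ rescales to $O(\delta(\bar\lambda^2+\bar\xi^2+\bar\lambda^3+\bar\xi^3))$ and $\alpha_j(\xi)=\alpha_j^0+O(\xi^2)$, the equation becomes
\[
\prod_{j=1}^3\big(\bar\lambda-i\alpha_j^0\bar\xi\big)+\gamma\prod_{k=1}^2\big(\bar\lambda-i\beta_k^0\bar\xi\big)=O\big(\delta(\bar\lambda^2+\bar\xi^2+\bar\lambda^3+\bar\xi^3)\big),
\]
an analytic equation in $(\bar\lambda,\bar\xi,\delta)$. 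Comparing polynomial growth in $\bar\lambda$, exactly as in Lemma~\ref{lem_kdv}, confines the small roots to $|\bar\lambda|\le M$ and, the principal part being a cubic, gives the asserted count of three roots with $|\lambda|\le M_0$. Letting $\delta\to0$ and then $\bar\xi\to0$ collapses the leading cubic to $\bar\lambda^2(\bar\lambda+\gamma)=0$, exhibiting a simple root near $-\gamma$ and a double root at the origin; this is once more a one-parameter bifurcation, now in $\bar\xi$.

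For the root near $-\gamma$ the bifurcation is nondegenerate, so the implicit function theorem applied to the rescaled equation yields a smooth $\bar\lambda_3(\bar\xi,\delta)=-\gamma+O(\bar\xi)+O(\delta)$, whence $\Re(\lambda_3)=\delta\,\Re(\bar\lambda_3)=-\gamma\delta+o(\delta)$, negative under (S3). The two roots emerging from the double root at $0$ are the delicate part. Here I would factor the rescaled cubic, via the Weierstrass Preparation Theorem in $\bar\lambda$, as a nonvanishing unit times a quadratic $\bar\lambda^2+b(\bar\xi,\delta)\bar\lambda+c(\bar\xi,\delta)$ with $b,c$ smooth and $b(0,0)=c(0,0)=0$. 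A short computation gives $b=-i(\beta_1^0+\beta_2^0)\bar\xi+\cdots$ and $c=-\beta_1^0\beta_2^0\bar\xi^2+\cdots$, so the discriminant equals $\bar\xi^2\,g(\bar\xi,\delta)$ with $g(0,0)=-(\beta_1^0-\beta_2^0)^2\neq0$ by (S1); thus $\sqrt{b^2-4c}=\bar\xi\sqrt{g}$ with $\sqrt{g}$ smooth and nonvanishing, and the two roots $\bar\lambda_{1,2}$ are individually smooth functions of $(\bar\xi,\delta)$, which justifies the claimed regularity in $\delta$ and $\xi/\delta$.

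To extract the real parts I would insert $\bar\lambda=i\beta_k^0\bar\xi+w$ into the leading cubic, obtaining $\prod_{j=1}^3\big(w+i(\beta_k^0-\alpha_j^0)\bar\xi\big)+\gamma\,w\big(w+i(\beta_k^0-\beta_{k'}^0)\bar\xi\big)=O(\delta\bar\xi^2)$ (with $\{k,k'\}=\{1,2\}$). Since $w=O(\bar\xi^2)$, balancing the $O(\bar\xi^3)$ terms gives
\[
w=\frac{\bar\xi^2}{\gamma}\,\frac{\prod_{j=1}^3(\beta_k^0-\alpha_j^0)}{\beta_k^0-\beta_{k'}^0}+(\text{corrections}),
\]
which is real at leading order; multiplying by $\delta$ recovers exactly the stated formulas for $\Re(\lambda_1),\Re(\lambda_2)$. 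The one subtlety is the size of the correction: a naive estimate of the $O(\delta\bar\xi^2)$ remainder produces a spurious $O(\delta\bar\xi)$ contribution to $\Re(w)$, i.e.\ a term linear (odd) in $\xi$. This is where I would invoke the reflection symmetry $\lambda(-\xi)=\overline{\lambda(\xi)}$ of the spectrum, valid because $L$ has real coefficients so that $\sigma(L_{-\xi})=\overline{\sigma(L_\xi)}$, which forces $\Re(\lambda_k)$ to be even in $\xi$; the odd contribution therefore cancels, leaving the remainder $O(\xi^2)$ as claimed.

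Finally, assuming (S1)--(S3) I would read off the signs. Under the interlacing (S2) one has $\prod_{j}(\beta_1^0-\alpha_j^0)=(+)(-)(-)>0$ while $\beta_1^0-\beta_2^0<0$, and symmetrically $\prod_{j}(\beta_2^0-\alpha_j^0)<0$ while $\beta_2^0-\beta_1^0>0$; combined with $\gamma>0$ from (S3), both leading coefficients $c_i:=\prod_{j}(\beta_i^0-\alpha_j^0)/(\beta_i^0-\beta_{i'}^0)$ are strictly negative. Hence $\Re(\lambda_i)=\frac{\xi^2}{\delta}\big(\tfrac{c_i}{\gamma}+O(\delta)\big)$, so for $\delta$ small the bracket is bounded above by $c_i/(2\gamma)<0$, yielding $\Re(\lambda_i)\le-\theta\xi^2/\delta$ with $\theta=-\min_i c_i/(2\gamma)>0$ after possibly shrinking $\epsilon_0,\eta_0$. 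The main obstacle throughout is the double-root degeneracy at the boundary $\xi/\delta=0$: it blocks a direct application of the implicit function theorem and, because the two bifurcating roots are purely imaginary at leading order, it pushes the stabilizing information into the second-order term, whose clean evaluation relies on the Weierstrass factorization together with the reflection symmetry.
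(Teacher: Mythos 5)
Your proposal is correct and follows the same overall strategy as the paper's proof: rescale $\lambda=\bar\lambda\delta$, $\xi=\bar\xi\delta$ in \eqref{crit-evansexpand}, confine the small roots by polynomial growth, reduce to $\bar\lambda^2(\bar\lambda+\gamma)=0$, handle the simple root near $-\gamma$ by the implicit function theorem, and use the evenness of $\Re\lambda_j$ in $\xi$ (from $\lambda(-\xi)=\overline{\lambda(\xi)}$ and $\beta_j^0\in\RM$) to kill the odd $O(\delta\xi)$ contamination and obtain the $O(\xi^2)$ remainder. The one place you diverge is the treatment of the double root: the paper performs a second rescaling $\lambda=\tilde\lambda\xi$ (after first showing $\bar\lambda=O(\bar\xi)$) and divides by $\delta\xi^2$, so that the two branches become \emph{simple} roots of the limit equation $\gamma\prod_k(\tilde\lambda-i\beta_k^0)=0$ and the implicit function theorem applies directly under (S1); you instead extract a Weierstrass quadratic $\bar\lambda^2+b\bar\lambda+c$ and argue via its discriminant. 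Both routes work, but yours carries a small extra obligation you pass over quickly: for the discriminant to equal $\bar\xi^2 g$ with $g$ analytic you need $c$ to vanish to second order in $\bar\xi$ uniformly in $\delta$ (not just at $\delta=0$); this does hold, since setting $\lambda=0$ in \eqref{crit-evansexpand} shows the constant term is $O(\xi^2(\delta+|\xi|))$ after division by $\Gamma$, but it deserves a line. The paper's second rescaling sidesteps this issue entirely, which is what it buys; your version, in exchange, makes the square-root (non-)degeneracy of the two branches and the exact role of (S1) more transparent. The final sign computation under (S1)--(S3) matches the paper's.
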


\begin{remark}
In particular, notice that Lemma \ref{lem_wh} validates, under the hypothesis that (S1)-(S3) hold,
the ``dissipativity" condition (D2) of the nonlinear stability theory of \cite{BJNRZ1}.  Furthermore,
it follows that the $O(\delta)$ corrector $\lambda_1$ in \eqref{neutralbreak} is non-zero provided
that $\gamma\neq 0$.  In particular, by Corollary \ref{c:h1d3} it follows that the
conditions (H1) and (D3)  hold provided $\gamma\neq 0$.
\end{remark}

\begin{proof}
Recall from \eqref{crit-evansexpand} that for $(|\lambda|,|\xi|,\delta)$ sufficiently small the equation $E(\lambda,\xi,\delta)=0$
expands as % the spectral problem reads
\begin{equation}\label{ev2}
\displaystyle
\prod_{j=1}^3(\lambda-i\alpha_j(\xi)\xi)+\gamma\delta\prod_{k=1}^2(\lambda-i\beta_k^0\xi)=O\left(\delta(\lambda^3+\xi^3)+\delta^2(\lambda^2+\xi^2)\right).
\end{equation}
Dividing this equation by $\delta^3$ and setting $\lambda=\bar\lambda\delta$, $\xi=\bar\xi\delta$ yields the equation
\begin{equation}\label{ev2.5}
\prod_{j=1}^3(\bar \lambda-i\alpha_j(\delta\bar\xi)\bar\xi)+\gamma\delta\prod_{k=1}^2(\bar\lambda-i\beta_k^0\bar\xi)=O\left(\delta(\bar\lambda^3+\bar\xi^3)+\delta(\bar\lambda^2+\bar\xi^2)\right),
\end{equation}
which can be rewritten as
\begin{equation}\label{ev3}
(1+O(\delta))\bar\lambda^3+(\gamma+O(\delta+\bar\xi))\bar\lambda^2=O(\bar\xi^2+\bar\lambda\bar\xi)+O(\delta\bar\xi^2)
\end{equation}
Hence, for $|\bar\xi|,\delta$ sufficiently small, one shows by
comparing polynomial growth on $\bar\lambda$ that $|\bar\lambda|\leq M$ for some $M>0$.
Now, letting $\bar\xi,\delta\to 0$ in \eqref{ev3} we have that
\[
\bar\lambda^2(\bar\lambda+\gamma)=0,
\]
hence $\bar\lambda=-\gamma$ is an isolated root of \eqref{ev2.5} when $(\bar\xi,\delta)=(0,0)$.
Applying the implicit function theorem to \eqref{ev2.5} in a neighborhood of $(\bar\lambda,\bar\xi,\delta)=(-\gamma,0,0)$
implies the existence of a root $\lambda_3(\xi,\delta)$ of \eqref{ev2}, smooth in $\bar\xi$ and $\delta$ and
defined for $\bar\xi,\delta$ sufficiently small, which expands as
\[
\lambda_3(\xi,\delta)=-\gamma\delta+o(\delta).
\]
Furthermore, for $\delta$ sufficiently small we clearly have $\Re(\lambda_3(\xi,\delta))<0$.

Next, we deal with the double root $\bar\lambda=0$ of \eqref{ev3}.  To this end,
we assume $|\bar\lambda|\leq \gamma/2$
and note that, from \eqref{ev3}, one has
$$
\displaystyle
\bar\lambda^2(\bar\lambda+\gamma+O(\delta+\bar\xi))=O(\bar\lambda\bar\xi+\bar\xi^2).
$$
Since $|\bar\lambda|\leq\gamma/2$, it follows that
$$
\displaystyle
\bar\lambda^2=O(\bar\lambda+\bar\xi^2)
$$
\noindent
hence $\bar\lambda=O(\bar\xi)$ or, equivalently, $\lambda=O(\xi)$.
Now, we return to \eqref{ev2} in order to determine the real
part of the roots $\bar\lambda_i(\delta,\xi), i=1,2$ that bifurcate from the double
root $\bar\lambda=0$.  To this end, note that dividing \eqref{ev2} by
$\delta\xi^2$ and setting $\lambda=\tilde\lambda\xi$, $\xi=\bar\xi\delta$ yields the equation
\begin{equation}\label{ev4}
\displaystyle
\bar\xi\prod_{j=1}^3(\tilde\lambda-i\alpha_j^0)+\gamma\prod_{k=1}^2(\tilde\lambda-i\beta_k^0)=O(\bar\xi \delta+\delta).
\end{equation}
Now, letting $\bar\xi,\delta\to 0$ in \eqref{ev4} yields
\[
\gamma\prod_{k=1}^2(\tilde\lambda-i\beta_k^0)=0,
\]
which by (S1) and the fact that $\gamma\neq 0$ has two isolate roots $\tilde\lambda_1=i\beta_1^0$ and $\tilde\lambda_2=i\beta_2^0$.
Applying the implicit function theorem to \eqref{ev4} in a neighborhood of $(\tilde\lambda,\bar\xi,\delta)=(\tilde\lambda_j,0,0)$ for $j=1,2$,
implies the existence of two roots $\{\tilde\lambda_j(\bar\xi,\delta)\}_{j=1,2}$, defined for $\bar\xi,\delta$ sufficiently small,
bifurcating from the $\tilde\lambda_j$ which are smooth
%such that $\lim_{(\bar\xi,\delta)\to(0,0)}\lambda_j(\bar\xi,\delta)=i\beta_j^0$.  Moreover, these roots are smooth
functions of $\delta$ and $\bar\xi$ and can be expanded as
%Then, applying the implicit function theorem, the eigenvalues $\bar\lambda_j(\xi,\delta), j=1,2$ that bifurcates from $\beta_1^0,\beta_2^0$ are smooth with respect to $\delta,\xi/\delta$ expand as
$$
\begin{array}{ll}
\displaystyle
\tilde\lambda_1(\xi,\delta)=i\beta_1^0+\frac{\bar\xi}{\gamma}\frac{\prod_{j=1}^3(\beta_1^0-\alpha_j^0)}{\beta_0^1-\beta_0^2}+O(\delta+\delta\bar\xi),\\
\displaystyle
\tilde\lambda_2(\xi,\delta)=i\beta_2^0+\frac{\bar\xi}{\gamma}\frac{\prod_{j=1}^3(\beta_2^0-\alpha_j^0)}{\beta_0^2-\beta_0^1}+O(\delta+\delta\bar\xi).
\end{array}
$$
\noindent
Since $\beta_j^0\in\RM$ by (S1), it follows that $\Re(\lambda_j(\xi,\delta))$, $j=1,2$, are even functions
of $\xi$ that can be expanded as
\begin{equation}\label{ev5}
%\begin{array}{ll}
\displaystyle
\Re(\lambda_1(\xi,\delta))=\frac{\xi^2}{\gamma\delta}\frac{\prod_{j=1}^3(\beta_1^0-\alpha_j^0)}{\beta_0^1-\beta_0^2}+O(\xi^2),\quad
%\\
%\displaystyle
\Re(\lambda_2(\xi,\delta))=\frac{\xi^2}{\gamma\delta}\frac{\prod_{j=1}^3(\beta_2^0-\alpha_j^0)}{\beta_0^2-\beta_0^1}+O(\xi^2)
%\end{array}
\end{equation}
for $\xi,\delta$ sufficiently small.
Under conditions (S1),(S2),(S3), one proves $\Re(\lambda_i(\xi,\delta))<0$ for $|\xi|\leq\epsilon_0\delta$ and $0<\delta\leq\eta_0$ for $\eta_0,\epsilon_0$ sufficiently small. This completes the proof of the lemma
\end{proof}

In summary, it follows from Lemma \ref{lem_int} and Lemma \ref{lem_wh} that no further assumptions
than the subcharacteristic conditions (S1), (S2), and (S3) are necessary for the spectral
stability of the underlying periodic wave train to long-wavelength perturbations.
Furthermore, conditions (S1)-(S3) are validated by the numerical analysis in \cite{BN} for all
``near-KdV" profiles described in Proposition \ref{p:kdvsolnexpand} with periods 
%CHANGED-MJ2/28/12
%$2X\in[8.49,26.17]$,
$X\in[8.49,26.17]$,
%ENDCHANGED
demonstrating that the formal analysis conducted in \cite{BN} is indeed sufficient to conclude, up to machine error
the spectral stability of a given wave train $U=U_\delta$, $0<\delta\ll 1$.
Finally, we also note that the analysis of Sections \ref{sec3} and \ref{sec4} demonstrate, up to machine error,
that the wave trains of \eqref{kdv-ks}
which are found to be spectrally stable by the analysis of \cite{BN} are nonlinearly 
%CHANGED-MR : removed
%modulationally 
stable, in the sense defined in \cite{BJNRZ1}.

Next, we justify our terminology in referring to (S1)-(S3) as the subcharacteristic conditions by considering the formal Whitham averaged system of \eqref{kdv-ks} about a given wave train $U$ in the singular limit $\delta\to 0^+$.
%CHANGED-MR : added
Actually, this is precisely on the basis of the following formal discussion that the subcharacteristic conditions (S1)-(S3) were first conjectured to play a major in the small-Floquet stability of "near-KdV" waves \cite{NR2}.

\subsection{Whitham's modulation equations}\label{whitham}

It is now a classical result that Whitham's modulation equations for periodic
waves of conservation laws provide an accurate description of the spectral
curves at the origin, i.e. of the stability of a given wave train to weak large-scale perturbations
Let us mention here the work \cite{Se} in the general case,
\cite{NR1} for shallow water equations, \cite{NR2} for KdV-KS 
%CHANGED-MR
%with fixed $\delta>0$
either for fixed $\delta>0$ or in the KdV limit 
and \cite{JZB,JZ1} for the generalized Korteweg-de Vries equation.  When considering \eqref{kdv-ks} in the
singular limit $\delta\to 0^+$, however, 
%CHANGED-MR
%the proof 
even the formal derivation 
of such a connection is more involved. In particular, we note that it is not sufficient to simply let $\delta\to 0$ in the modulation equations derived for \eqref{kdv-ks} with $\delta>0$ fixed. Instead, in this singular limit the introduction of a new set of modulation equations is required
%CHANGED-MR
\cite{NR2}. 
In this section, we 
%CHANGED-MR
%derive 
recall the derivation of 
the appropriate modulation equations in this singular limit and
%CHANGED-MR
emphasize in which way the previous analysis 
%
%CHANGED-MR : 
demonstrates their connection with the spectrum at the origin of the linearized operator about a given wave train. In particular, 
%CHANGED-MR
%this analysis 
the structure of the modulation equations 
will justify our terminology referring to conditions (S1), (S2), and (S3) as the ``subcharacteristic'' conditions.

Recall that the KdV-KS equation reads
%CHANGED-MR : ()
\begin{equation}\label{ks_f}
\displaystyle
\partial_t u+\partial_x(\frac{u^2}{2})+\partial_x^3u+\delta(\partial^2_x u+\partial_x^3 u)=0.
\end{equation}
\noindent
%CHANGED-MR
%Following the approach of \cite{Se}, we derive 
Reproducing \cite{NR2}, we derive 
the Whitham modulation equations about a given periodic wave train of \eqref{ks_f} in the singular limit $\delta\to 0^+$.  To this end, we introduce the slow coordinates % in the long-wavelength regime
$(X,T)=(\varepsilon x,\varepsilon t)$, $\varepsilon\ll 1$,  % by following the approach introduced in \cite{Se}.
set $\delta=\bar\delta\varepsilon$ with $\bar\delta\in(0,\infty)$, and note that in the slow $(X,T)$ variables
equation \eqref{ks_f} reads
%CHANGED-MR : ()
\begin{equation}\label{ks_fr}
\displaystyle
\partial_T u+\partial_X(\frac{u^2}{2})+\varepsilon^2\partial_X^3 u+\bar\delta(\varepsilon^2\partial_X^2 u+\varepsilon^4\partial_X^4u)=0.
\end{equation}
\noindent
%CHANGEDKZnew (too many `then`s):
Following \cite{Se},
% then, 
we search for an expansion of $u$, solution of \eqref{ks_fr}, in the form
\begin{equation}\label{ans}
\displaystyle
u(X,T)=U^{(0)}(\frac{\phi(X,T)}{\varepsilon};X,T)+\varepsilon U^{(1)}(\frac{\phi(X,T)}{\varepsilon},X,T)+O(\varepsilon^2)
\end{equation}
\noindent
with $U^{(i)}(y;X,T)$ $1$-periodic in $y$.  Notice then that the local period of oscillation of $u^0$ in the variable
$y$ is $\eps/\partial_X\phi$, where we assume the unknown phase {\it a priori} satisfies the condition $\partial_X\phi\neq 0$.
By inserting this ansatz into \eqref{ks_fr} and collecting $O(\varepsilon^{-1})$ terms, one finds
\begin{equation}\label{kdv_w-1}
\displaystyle
\Omega\partial_y U^{(0)}+\kappa U^{(0)}\partial_y U^{(0)}+\kappa^3\partial_y^3U^{(0)}=0,
\end{equation}
\noindent
where $\Omega=\partial_T\phi$ and $\kappa=\partial_X\phi$.  Equation \eqref{kdv_w-1} is recognized as the
traveling wave ODE for the KdV equation \eqref{kdv} in the variable $\kappa y$ with wave speed 
%CHANGED-MR : -
$-\Omega/\kappa$. As such, equation \eqref{kdv_w-1} has a solution provided $\Omega=-\kappa c_0(u_0,\kappa,k)$ where now $u_0,\kappa$ and $k$ are considered as functions of the slow variables$(X,T)$.  In this case, the solutions of \eqref{kdv_w-1} can be expressed as
\begin{equation}\label{wh_o1soln}
\begin{array}{ll}
\displaystyle
U^{(0)}(y,X,T)=U_0(y,u_0,\kappa,k)=u_0+12k^2\kappa^2{\rm cn}^2(\kappa y,k),\\
\displaystyle
c_0(u_0,\kappa,k)=u_0+8\kappa^2k^2-4\kappa^2.
\end{array}
\end{equation}
\noindent
In what follows, we derive a system of ``modulation equations" describing the evolution of $(u_0,\kappa,k)$
as functions of the slow variables $(X,T)$.  One such equation comes from noticing that 
the compatibility condition $\partial_T\kappa=\partial_X\Omega$ yields the equation 
\begin{equation}\label{eq_wn}
\displaystyle
\partial_T \kappa+\partial_X(\kappa c_0(u_0,\kappa,k))=0
\end{equation}
for the local wave number $\kappa$.

To find other modulation equations we continue the above expansion and note that collecting the $O(1)$ terms
yields an equation of the form $L_{KdV}U^{(1)}=\dots$ where $L_{KdV}$ is the operator describing the linearized evolution of the KdV equation about $U^{(0)}$. Since the kernel of the adjoint of $L_{KdV}$ is spanned by $1$ and $U^{(0)}$, solvability conditions and thus the needed extra equations will be obtained by averaging in $y$ against $1$ and $U^{(0)}$ the $O(1)$ equation. Yet this equation being of the form
\begin{equation}\label{kdv_w0}
\displaystyle
\partial_T U^{(0)}+\partial_X\frac{(U^{(0)})^2}{2}=\partial_y\left(\cdots\right).
\end{equation}
it follows, averaging it over a single period in $y$, that
\begin{equation}\label{eq_mass}
\displaystyle
\partial_T\langle U_0(\cdot;u_0,\kappa,k)\rangle+\partial_X\left<\frac{U_0^2}{2}(\cdot,u_0,\kappa,k)\right>=0
\end{equation}
must be satisfied, where here $\left<f\right>:=\int_0^1f(y)dy$. To obtain the other solvability condition in an easy way, let us first remark, following the method used in \cite{JZ1} to derive modulations equations for the generalized Korteweg-de Vries equation, that by multiplying \eqref{ks_f} by $u$ we obtain an equation of the form
\begin{equation}\label{kdv_en}
\displaystyle
\partial_t\left(\frac{u^2}{2}\right)+\partial_x\left(\frac{u^3}{3}-\frac{3(\partial_x u)^2}{2}\right)=\delta\left((\partial_x u)^2-(\partial_x^2u)^2\right)+\partial_x^2(\cdots).
\end{equation}
\noindent
This implies that equation \eqref{kdv_w0} multiplied by $U_0$ yields 
\begin{equation}\label{kdv_en0}
\displaystyle
\partial_T\left(\frac{U_0^2}{2}\right)+\partial_X\left(\frac{U_0^3}{3}-\frac{3(U_0')^2}{2}\right)=\bar\delta\left((U_0')^2-(U_0'')^2\right)+\partial_y(\cdots).
\end{equation}
\noindent
Averaging \eqref{kdv_en0} over a period in $y$ then provides the 
%CHANGEDKZndw (terminology):
%conservation 
balance 
%ENDCHANGED
law 
%the last conservation law of the modulation system by averaging \eqref{kdv_en0} over one period:
\begin{equation}\label{eq_en}
\displaystyle
\partial_T\left<\frac{U_0^2}{2}\right>+\partial_X\left< \frac{U_0^3}{3}-\frac{3(U_0')^2}{2}\right>=\bar\delta\left(\langle(U_0')^2\rangle-\langle(U_0'')^2\rangle\right).
\end{equation}
\noindent
Together, the homogenized system (\ref{eq_wn},\ref{eq_mass},\ref{eq_en}) forms a closed system of three conservation laws with a source term, called the averaged Whitham modulation system, 
describing the evolution of the quantities $(u_0,\kappa,k)$ as functions of the slow variables $(X,T)$.
%The Whitham's modulation system (\ref{eq_wn},\ref{eq_mass},\ref{eq_en}) is a system of conservation laws with a source term. 
%ENDCHANGED

%CHANGED-MR
Again repeating \cite{NR2}, let us now comment on the previous system. 
As a first step in analyzing the modulation system (\ref{eq_wn},\ref{eq_mass},\ref{eq_en}), notice that the steady states are given by points $(u_0^{\star},\kappa^\star,k^\star)\in\RM^3$ such that 
%CHANGED-MR
%that
\[
\left<(U_0')^2(\;\cdot\;,u_0^{\star},\kappa^\star,k^\star)\right>=\left<(U_0'')^2(\;\cdot\;,u_0^{\star},\kappa^\star,k^\star)\right>,
\]
where $U_0$ is given as in \eqref{wh_o1soln}, i.e. $U_0$ corresponds to periodic traveling waves of 
\eqref{ks_f} in the limit $\delta\to 0$.  Indeed, by Proposition \ref{p:kdvsolnexpand}, 
these are simply the cnoidal wave trains of the KdV equation that can be continued as solutions of \eqref{kdv-ks}.
Now, letting $\bar\delta\to 0$, corresponding to large scale perturbations with frequency/wave number
of order $\varepsilon\gg\delta$, in the homogenized system (\ref{eq_wn},\ref{eq_mass},\ref{eq_en})
yields the Whitham averaged system for the Korteweg-de Vries equation; see 
%CHANGED-MR
\cite{W,JZ1}.
% 
%CHANGED-MJ
%This system is known ({\bf CITE!}) to be strictly hyperbolic with eigenvalues
As stated previously, the numerical results in Figure \ref{fig2} in Section \ref{s:num} below
demonstrate that for all KdV cnoidal wave trains considered here the Whitham averaged system
for \eqref{kdv} is strictly hyperbolic with eigenvalues
%ENDCHANGED
\[
\alpha_1(u_0,\kappa,k)<\alpha_2(u_0,\kappa,k)<\alpha_3(u_0,\kappa,k),\quad \forall (u_0,\kappa,k)\in\mathbb{R}^3.
\]
Furthermore, in the limit $\bar\delta\to\infty$, corresponding to a relaxation limit 
and large scale perturbations with frequency/wave number $\varepsilon\ll\delta$ 
%CHANGED-MR 
%yields 
we obtain 
the relaxed system
\begin{equation}\label{wh_rel}
\begin{array}{ll}
\displaystyle
\partial_T \mathcal{G}(k)+\partial_X\left(\mathcal{G}(k)c_0(u_0,\mathcal{G}(k),k)\right)=0,\\
\displaystyle
\partial_T\left< U_0(\cdot;u_0,\mathcal{G}(k),k)\right>+\partial_X\left<\frac{U_0^2}{2}(\cdot;u_0,\mathcal{G}(k),k)\right>=0,
\end{array}
\end{equation}
\noindent
where here $\kappa=\mathcal{G}(k)$ is given by the selection principal in Proposition \ref{p:kdvsolnexpand}.
Notice that this system may also be obtained directly from the Whitham averaged system of conservation laws for the
KdV-KS equation \eqref{kdv-ks}, derived in \cite{NR2} for fixed $\delta>0$ as
$$
\displaystyle
\partial_T\kappa+\partial_X(\kappa c_{\delta}(M,\kappa))=0,\quad \partial_T M+\partial_X\left<\frac{U_\delta^2}{2}(M,\kappa)\right>=0, 
\quad M=\left< U_{\delta}(M,\kappa)\right>,
$$
\noindent
in the limit as $\delta\to 0$.  It is now well established \cite{Se,NR2} that a necessary 
condition for spectral stability of periodic traveling waves 
%CHANGED-MR
%of 
under large scale perturbations is that system \eqref{wh_rel} be hyperbolic, i.e. have only real eigenvalues.
In our analysis from Section \ref{s:specevans}, however, we assume the stronger condition
that the modulation system \eqref{wh_rel} is \emph{strictly} hyperbolic with eigenvalues
\begin{equation}\label{eig_wh}
\displaystyle
\beta_1(u_0,k)<\beta_2(u_0,k);
\end{equation}
\noindent
this corresponds precisely to condition $(S1)$ in Lemma \ref{lem_kdv}. 
It clearly follows that in considering only the relaxed hyperbolic system 
%CHANGED-MR : eq
\eqref{wh_rel}, obtained by simply letting $\delta\to 0$ in the
Whitham modulation equations for \eqref{kdv-ks} derived for fixed $\delta>0$ that some information is lost: namely,
in this particular limit we obtain no information regarding conditions (S2) and (S3).

To understand the roles of conditions (S2) and (S3),
%CHANGEDnewKZ:
% then, 
we must consider rather the
%Finally, we discuss the subcharacteristic conditions $(S2),(S3)$ by considering the 
%ENDCHANGED 
full modulation system (\ref{eq_wn},\ref{eq_mass},\ref{eq_en}) derived in the singular
limit $\delta\to 0$. For the sake of clarity, let us write this 
system with the parameterization $(\kappa,M,E)$ with $M=\langle U \rangle$ corresponding to the spatial average
of $U$ over a period and $E=\langle U^2/2\rangle$; see \cite{JZB,JZ1} for a discussion on such a parameterization of periodic
wave trains of the KdV equation \eqref{kdv}.  

In this parameterization, 
%CHANGEDnewKZ
%then, 
the modulation system
(\ref{eq_wn},\ref{eq_mass},\ref{eq_en}) 
%CHANGED-MR
%can be expressed as
recovers the form obtained in \cite{NR2}
\begin{equation}\label{wh_kmE}
\displaystyle
\partial_T\kappa-\partial_X(\Omega(\kappa,M,E))=0,\ \partial_T M+\partial_X E=0,\  
\partial_T E+\partial_X Q(\kappa,M,E)=\bar\delta R(\kappa,M,E),
\end{equation}
%CHANGED Omega to -Omega to stay coherent with previous definition of \Omega P.N.
where $\Omega(\kappa,M,E)=-\kappa c_0(\kappa,M,E)$, $Q=\langle U_0^3-3(U_0')^2/2\rangle$ and $R=\langle(U_0')^2-(U_0'')^2\rangle$.
In the context of relaxation theory it is a classical assumption to suppose that the
% It is classical in relaxation theory to suppose 
%the existence of a function $r$ such that
%$$
%\displaystyle
%R(\kappa,M,E)=0 \Longleftrightarrow E=r(\kappa,M).
%$$
%\noindent
%The condition 
condition $\partial_E R(\kappa^\star,M^\star,E^\star)\neq 0$ is satisfied, ensuring that near the equilibrium state 
$(\kappa^\star,M^\star,E^\star)$ the equation $E(\kappa,M,E)=0$ defines $E$ implicitly
in terms of $(\kappa,M)$; in what follows, we assume that this condition holds.
%ensures the existence of such a function in the neighbourhood of $(\kappa^\star,M^\star,E^\star)$ and is supposed to hold in the following. 

Under this assumption,
%CHANGEDKZnew then, 
the subcharacteristic condition $(S3)$  can be easily interpreted.
Indeed, linearizing the modulation system \eqref{wh_kmE} about the steady state $(\kappa^\star,M^\star,E^\star)$ 
and restricting to spatially homogeneous, i.e. $X$-independent, perturbations %to homogeneous one is left with
yields the equation 
\begin{equation}\label{linwh}
\partial_T\tilde\kappa=0,\quad \partial_T \tilde M=0,\quad \partial_T \tilde E=\bar\delta\left(\partial_E R^\star\tilde E+d_{\kappa,M}R^\star(\tilde\kappa,\tilde M)\right),
\end{equation}
where $R^\star=R(\kappa^\star,M^\star,E^\star)$. 
Considered as a constant coefficient equation in the slow variables $(X,T)$, the 
dispersion relation of \eqref{linwh} is then given by
%The dispersion relation associated to this system is
\begin{equation}\label{disp0}
\displaystyle
\lambda^2(\lambda-\bar\delta\partial_E R^\star)=0.
\end{equation}
\noindent
From this, it is clear from our spectral analysis in Section \ref{s:specevans} that the condition $(S3)$ 
is equivalent to $\partial_E R^\star<0$.  We note that this condition is a standard assumption in the 
context of relaxation theory, and is equivalent to requiring that the manifold of solutions of $R(\kappa,M,E)=0$ 
is stable.  

Furthermore, the dispersion relation \eqref{disp0} implies that two spectral curves bifurcate from the origin
as one allows the period of the perturbations to vary, corresponding to stability or instability with respect to weak long-wavelength perturbations.  It is 
%by now 
a classical result \cite{W,Yo} that a necessary condition for the stability of
the steady states of \eqref{wh_kmE} to such large-scale perturbations is given by the subcharacteristic
condition
%It is now classical that a necessary condition for the stability  of steady states of  (\ref{wh_kmE}) 
%is given by the subcharacteristic condition
\begin{equation}\label{sub_w}
\displaystyle
\alpha_1^\star\leq\beta_1^\star\leq\alpha_2^{\star}\leq\beta_2^{\star}\leq\alpha_3^{\star},
\end{equation}
where here the $\alpha_j^\star$ and $\beta_j^\star$ denote the functions $\alpha_j^0$ and $\beta_j^0$, respectively,
evaluated at the associated steady state.
Notice that in our analysis from Section \ref{s:specevans}, however, we assume the stronger condition that the inequalities
in \eqref{sub_w} are \emph{strict}, corresponding precisely to 
%CHANGED-MR
%th 
condition (S2).
%The condition \eqref{sub_w} is precisely the condition $(S2)$.   

%CHANGED-MR : replaced
%In summary, we have demonstrated that the conditions (S1), (S2), and (S3) introduced
%in Section \ref{s:specevans}, ensuring spectral stability of wave trains of KdV-KS in the limit $\delta\to 0$ 
%and following (up to machine error) from the numerical results of \cite{BN},
%correspond exactly 
%%CHANGED-KZ
%to the strict subcharacteristic
%%the strict-subcharacteristic 
%%ENDCHANGED
%conditions for the relaxation type Whitham modulation system \eqref{wh_kmE},
%derived from \eqref{kdv-ks} in the singular limit $\delta\to 0$. 
%As a consequence, we have thus rigorously validated the modulation system \eqref{wh_kmE} 
%for \eqref{kdv-ks} in this singular limit under the assumption that \eqref{wh_rel} is strictly hyperbolic
%and the subcharacteristic condition \eqref{sub_w} holds with strict inequalities.

In summary, we have just reviewed how conditions (S1), (S2), and (S3) were introduced in \cite{NR2} as the strict subcharacteristic conditions for the relaxation type Whitham modulation system \eqref{wh_kmE},
derived from \eqref{kdv-ks} in the singular limit $\delta\to 0$. As a byproduct of the analysis, carried out in Section \ref{s:specevans}, of the exact role of conditions (S1)-(S3), our present work have thus also rigorously validated the role of the modulation system \eqref{wh_kmE} in the determination of the presence of unstable spectrum near the origin for "near KdV" waves. Moreover, recall that the previous analysis also implies that validation of these conditions follows (up to machine error) from the numerical results of \cite{BN}.

%CHANGED-MR : removed, not convinced
%%CHANGEDnewKZ (added- is it useful?):
%\br\label{exprmk}
%More precisely, our formal derivation of \eqref{wh_kmE} is consistent when
%$\epsilon \le C\delta$, so that neglected second- and higher-order derivative
%terms enter at order $O(\delta \eps+ \eps^2)<<\delta$: that is, diffusive
%effects enter only through compatibility conditions associated with the
%existence problem, and not through actual diffusive terms in the 
%equation \eqref{kdv-ks}.
%Equivalently, the relaxation description is accurate on the region
%$|\xi|\le C\delta$ not investigated in \cite{BN}.
%\er
%%ENDCHANGED

\subsection{Numerical computation of subcharacteristic conditions}\label{s:num}

As 
%CHANGED-MR :
%indicated 
proved in Section \ref{s:specevans} the subcharacteristic conditions (S1)-(S3) are sufficient to conclude
%CHANGED-MR 
%the spectral stability in a sufficiently small neighborhood of the origin 
%of a given periodic wave train to \eqref{kdv-ks} for $\delta>0$ sufficiently small.
for $\delta>0$ sufficiently small the absence of unstable spectrum in a sufficiently small neighborhood of the origin. 
%ENDCHANGED
Moreover, we noticed that these conditions follow directly from the numerical calculations of \cite{BN}. 
%CHANGED-MR : yet
Yet, to finish this section, we provide an independent
verification of the conditions 
%CHANGED-KZ, hoping to include all three...
%(S1) and (S2) 
(S1)--(S3) 
%ENDCHANGED
using the connection to the Whitham modulation system 
%CHANGED-MR
%developed
of \cite{NR2} reviewed 
in Section \ref{whitham}. In particular, we rely on the parameterization of the ``near-KdV" wave trains
of \eqref{kdv-ks} described in Proposition \ref{p:kdvsolnexpand}.  
%Notice, however, that the verification of (S3)
%still requires knowledge of the numerical results of \cite{BN}.   %{\bf EXPLAIN MORE...}
%(TODO: NO- we will do this here as well, it should follow by a similar computation or else we can get by full rank of selection principle, referring to Ercolani et al...-KZ)

It is well known that the Whitham modulation equations for the KdV equation \eqref{kdv} can be diagonalized by quantities referred to as Riemann invariants; see \cite{W}.  To describe this diagonalization and introduce the appropriate set of Riemann invariants, we first recall some properties concerning the parameterization of the KdV wave trains. To begin, notice that traveling wave solutions of \eqref{kdv} are solutions of the form $u(x,t)=u(x-ct)$ for some
$c\in\RM$, where the profile $u(\cdot)$ satisfies the equation % equations are solutions of
\[
uu'-cu'+u'''=0.
\]
Integrating once, one finds the profile $u$ satisfies the Hamiltonian ODE
$$
\displaystyle
u''+\frac{u^2}{2}-cu=a,
$$
\noindent
for some constant of integration $a\in\R$, which can then be reduced to the form
of a nonlinear oscillator as % Then, by multiplying by $u'$ and integrating once, one finds
\begin{equation}\label{kdvquad}
\frac{(u')^2}{2}=q-W(u;a,c),\quad W(u;a,c)=\frac{u^3}{6}-c\frac{u^2}{2}-au,
\end{equation}
where again $q$ denotes a constant of integration and $W$ represents the effective potential energy
of the Hamiltonian ODE \eqref{kdvquad}.  On open sets of the parameter space $(a,q,c)\in\RM^3$
the cubic polynomial $q-W(u;a,c)$ has positive discriminant so that there exist real numbers
$u_1\leq u_2\leq u_3$ such that
\[
q-W(u;a,c)=\frac{1}{6}(u-u_1)(u-u_2)(u_3-u).
\]
%Equivalently, equation \eqref{kdvquad} c $(u')^2+G(u)=0$ with $G(u)=(u-u_1)(u-u_2)(u-u_3)$, $u_1\leq u_2\leq u_3$. 
By elementary phase plane analysis, it follows that for such $(a,q,c)$ the profile ODE \eqref{kdvquad}
admits non-constant periodic solutions.  Moreover, by identifying powers of $u$ we find in this parameterization that
\begin{equation}\label{cdef}
c=\frac{u_1+u_2+u_3}{3},\quad a=-\frac{1}{6}(u_1u_2+u_1u_3+u_2u_3),\quad q=\frac{u_1u_2u_3}{6}.
\end{equation}
Using straightforward elliptic integral calculations, we find that the periodic solutions
of \eqref{kdvquad} can be written in terms of the Jacobi cnoidal function $\cn(x,k)$ as
%The cnoidal periodic solutions are written as
\begin{equation}\label{kdef}
u(\xi)=u_2+(u_3-u_2)\cn^2\left(\sqrt{\frac{u_3-u_1}{3}}\xi,k\right),\quad \xi=x-ct,\quad k^2=\frac{u_3-u_2}{u_3-u_1}.
\end{equation}
In particular, notice that all solutions of \eqref{kdv} are of form \eqref{kdef} up to a Galilean shift
and spatial translation.
Letting $\displaystyle X=\frac{2\pi}{\kappa}$ denote the period of the above wave train, it follows again
by standard elliptic function considerations that $\kappa$ can be expressed as
\begin{equation}\label{kappadef}
\kappa=\frac{\pi}{K(k)}\sqrt{\frac{u_3-u_1}{3}}, %\quad L=2K(k)\sqrt{\frac{3}{u_3-u_1}}.
\end{equation}
where here 
\be\label{Kdef}
K(k)=\int_0^1\frac{dx}{\sqrt{1-x^2}\sqrt{1-k^2x^2}}
\ee
denotes the complete elliptic integral of the first kind.

Furthermore, in terms of this parameterization we note that 
\[
\langle u\rangle=u_1+2(u_3-u_1)\frac{E(k)}{K(k)},\quad \left<\frac{u^2}{2}\right>=c\langle u\rangle+a,
\]
where here $\langle\cdot\rangle$ denotes the spatial average (in $\xi$) over a period $X$
and
\be\label{Edef}
E(k)=\int_0^1\frac{\sqrt{1-k^2x^2}}{\sqrt{1-x^2}}\;dx
\ee
denotes the complete elliptic integral of the second kind.

With this preparation, we can introduce the Riemann invariants $(\omega_1,\omega_2,\omega_3)$ 
for the KdV equation \eqref{kdv}, which are defined in terms of the $u_i$ as
\[
\omega_1=\frac{u_1+u_2}{2},\quad \omega_2=\frac{u_1+u_3}{2},\quad \omega_3=\frac{u_2+u_3}{2}.
\]
In terms of this parameterization, we have
\[
\begin{array}{lll}
\displaystyle
u(\xi)=\omega_1+\beta_3-\omega_2+2(\omega_2-\omega_1)\cn^2\left(\sqrt{\frac{2(\omega_3-\omega_1)}{3}}\xi,k\right),\quad 
  \kappa=\frac{2\pi}{K(k)}\sqrt{\frac{2(\omega_3-\omega_1)}{3}},\\%\xi=x-ct,\\
\displaystyle
c=\frac{\omega_1+\omega_2+\omega_3}{3},\quad k^2=\frac{\omega_2-\omega_1}{\omega_3-\omega_1},\quad  
\langle u\rangle=\omega_1+\omega_2-\omega_3+4(\omega_3-\omega_1)\frac{E(k)}{K(k)},\\
\displaystyle
\left<\frac{u^2}{2}\right>=c\langle u \rangle+a,\quad a=-\frac{1}{6}\left(2\beta_1(\omega_2+\omega_3-\omega_1)+(\omega_1+\omega_2-\omega_3)(\omega_1+\omega_3-\omega_2)\right)
\end{array}
\]
The Whitham modulation equations for the KdV equations can be diagonalized by the Riemann invariants $\omega_i$, in the sense that they can be written as
\[
\partial_T\omega_i+V_i(\omega_1,\omega_2,\omega_3)\partial_X\omega_i=0,
\]
where the characteristic velocities $V_i$ are given explicitly by
\[
V_i(\omega_1,\omega_2,\omega_3)=\frac{\partial_{\omega_i}(\kappa c)}{\partial_{\omega_i}\kappa}%=c+\frac{1}{\partial_{\omega_i}\ln(\kappa)}.
\]
or, alternatively, as $V_i(\omega_1,\omega_2,\omega_3)=c+\left(\partial_{\omega_i}\ln(\kappa)\right)$.
Clearly, the characteristic velocities $V_i(\omega_1,\omega_2,\omega_3)$ correspond to the eigenvalues of the Whitham modulation equations for \eqref{kdv} about the periodic traveling 
%CHANGED-MR
wave
given in \eqref{kdef} associated to $(\omega_1,\omega_2,\omega_3)$.  
To describe these velocities more explicitly, we find it more 
convenient to parameterize the problem by the variables 
$\omega_1, \Delta=\omega_3-\omega_1$ and $k^2=(\omega_2-\omega_1)(\omega_3-\omega_1)^{-1}$. 
In terms of $(\omega_1,\Delta,k^2)$,  an elementary calculation shows that the characteristic velocities
$V_i$ can be expressed as $V_i(\omega_1,\omega_2,\omega_3)=c+\zeta_i$, where
$\zeta_i=\frac{2\Delta}{3}b_i(k)$ and
\[
b_1(k)=\frac{k^2K(k)}{E(k)-K(k)},\quad b_2(k)=\frac{k^2(1-k^2)K(k)}{(1-k^2)K(k)-E(k)},\quad b_3(k)=\frac{(1-k^2)K(k)}{E(k)},
\]
with $K(k),E(k)$ as in \eqref{Kdef}, \eqref{Edef}
denoting 
%CHANGED-MR : e
elliptic integrals of the first and second kind.
In Figure \ref{fig2} we plot the characteristic velocities in terms of the period $X(k)$ of the underlying
KdV wave train.  In particular, we see that for all $k\in(0,1)$ the characteristic velocities are
distinct, corresponding to 
%\eqref{vap_kdv} being satisfied, i.e. 
satisfaction of \eqref{vap_kdv}
 i.e. 
to strict hyperbolicity of the associated Whitham modulation equation.

Next, we compute the eigenvalues of the relaxed Whitham modulation system \eqref{wh_rel},
%CHANGED-MR : precised
which is also the limit as $\delta\to0$ of the Whitham modulation system associated
to \eqref{kdv-ks} for fixed $\delta>0$ \cite{NR2}. 
Recall from Proposition \ref{p:kdvsolnexpand} that we must restrict ourselves to those cnoidal waves of form \eqref{kdef} such that the selection principle $\kappa=\mathcal{G}(k)$
holds.  In terms of the $(\omega_1,k^2,\Delta)$ parameterization of the KdV Whitham system, this
modulation system restricted to the ``near-KdV" wave trains discussed in Proposition \ref{p:kdvsolnexpand} 
can be expressed as
\begin{equation}\label{wh-nearkdv}
\partial_T\kappa+\kappa\partial_X c=0,\quad \partial_T\langle u\rangle+ \langle u\rangle \partial_X c+\partial_X a=0,
\end{equation}
where
\[
\langle u\rangle=\beta_1+(k^2-1+4\frac{E(k)}{K(k)})\Delta,\quad a=-\frac{1}{6}\left(3\beta_1^2+2(k^2+1)\Delta\beta_1-(k^2-1)^2\Delta^2\right),
\]
and, recalling Proposition \ref{p:kdvsolnexpand},
%CHANGED-MJ formula for \Delta missing factor of 1/2 in (...).
%\[
%\kappa=\mathcal{G}(k),\quad \Delta(k)=\frac{3}{2}\left(\frac{K(k)\mathcal{G}(k)}{\pi}\right)^2.
%\]
\[
\kappa=\mathcal{G}(k),\quad \Delta(k)=\frac{3}{2}\left(\frac{K(k)\mathcal{G}(k)}{2\pi}\right)^2.
\]
To compute the eigenvalues of this relaxed modulation system, using the Galilean invariance of \eqref{kdv}
we require $\langle u\rangle=0$, which is equivalent to requiring
$\omega_1=-(k^2-1+4\frac{E(k)}{K(k)})\Delta(k)$.  This reduction thus leaves the elliptic modulus 
$k$ as the only parameter of the problem.   
It is then a lengthly but straightforward calculation to show that the eigenvalues $\beta_i^{\star} (k), i=1,2$ of 
\eqref{wh-nearkdv} are given by the roots of the polynomial equation
\begin{equation}\label{wh_relaxedev}
A(k)\lambda^2-B(k)\lambda+C(k)=0,
\end{equation}
where the coefficients are given by $A(k)=\mathcal{G}'(k)$,
\begin{eqnarray*}
B(k)&=&\mathcal{G}'(k)\left(k^2-1-\frac{k^2+1}{3}+\frac{4E(k)}{K(k)}\right)\Delta(k)\\
&&\;\;\;\;-\mathcal{G}(k)\left[\left(k^2-1-\frac{k^2+1}{3}+\frac{4E(k)}{K(k)}\right)\Delta(k)\right]',\nonumber\\
\displaystyle
C(k)&=&-\frac{2\mathcal{G}(k)\Delta(k)(2k^2-1)}{9}\left(2k\Delta(k)+(k^2+1)\Delta'(k)\right).\nonumber
\end{eqnarray*}
Notice that the roots of \eqref{wh_relaxedev} correspond to the eigenvalues $\beta_j^*$ considered earlier.
In Figure~\ref{fig2} we have plotted the  characteristic wave speeds 
$\{\alpha_i^*(X(k))\}_{i=1,2,3}$ and $\{\beta_j^*(X(k))\}_{j=1,2}$ as functions of 
the period $X(k)$ of the underlying wave train.  
From these numerics, it is clear that the subcharacteristic conditions (S1) and (S2) are satisfied
for all waves with period  $X\geq X_c$, where the critical period is $X_c\approx 8$. 
%CHANGED, to many notices..KZ:
%Notice that if 
For 
%ENDCHANGD
$X<X_c$, condition (S2) is violated, corresponding to a sideband (modulational) instability
of the associated wave train.  This threshold is consistent with 
%CHANGED-MR
%that 
the one 
found in \cite{BN}. 
%CHANGED-MJnew addressing TODO below.
Furthermore, since the low-frequency stability conditions (S1)-(S3) are satisfied for all
periods $X\geq X_c$, we see also that the upper stability boundary $X\approx 26.17$ cannot
be associated with a sideband instability, again consistent with the observations of \cite{BN}.
%ENDCHANGED
%
%TODO (add): We see also that the upper stability boundary cannot be
%a sideband instability (since the low-frequency stability conditions
%(S1)--(S3) are satisfied), again consistent with the observations 
%of \cite{BN}.

\begin{figure}[htbp]
\begin{center}
\includegraphics[scale=0.7]{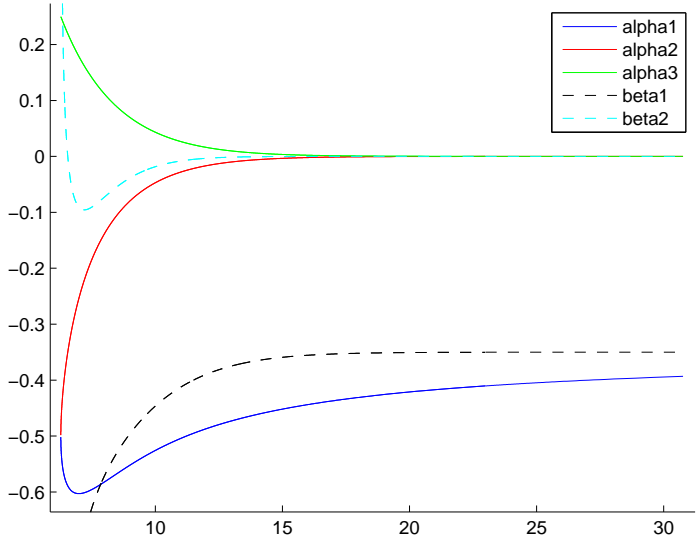}
%CHANGED-MJnew
%\caption{\label{fig2} Characteristic velocities for the Whitham system for Korteweg-de Vries equation and the relaxed Whitham's system}
\caption{\label{fig2} Here, we plot the characteristic velocities $\{\alpha_j(X(k))\}_{j=1}^3$ and $\{\beta_j(X(k))\}_{j=1}^2$ for the Whitham system for 
Korteweg-de Vries equation and the relaxed Whitham's system \eqref{wh-nearkdv}, respectively, as functions of the period $X(k)$ of the
underlying wave train.}
%ENDCHANGED-MJnew
\end{center}
\end{figure}

\begin{figure}[htbp]
\begin{center}
\includegraphics[scale=0.6]{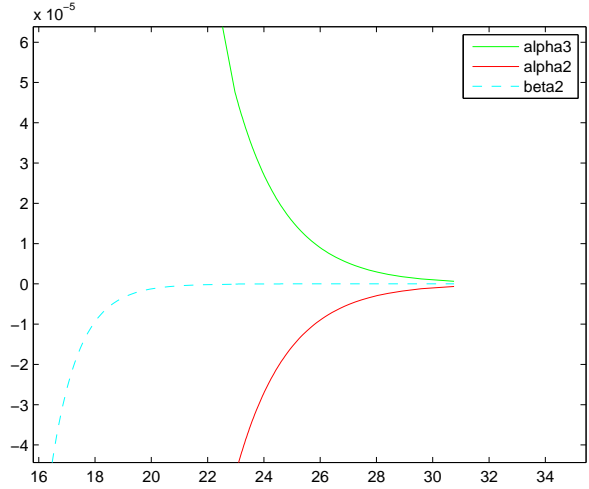}
%CHANGED-MJnew
%\caption{\label{fig3} Characteristic velocities for the Whitham system for Korteweg-de Vries equation and the relaxed Whitham's system: zoom on the largest eigenvalues}
\caption{\label{fig3} Here, we zoom in on the largest eigenvalues $\alpha_{2,3}(X(k))$ and $\beta_2(X(k))$ in Figure \ref{fig2}, noting
in particular that $\alpha_2(X(k))<\beta_2(X(k))<\alpha_3(X(k))$ for all periods $X(k)\geq X_c$ of the underling wave train.}
%ENDCHANGED
\end{center}
\end{figure}

%TODO: show that (S3) satisfied for all $X$. (Ercolani? direct computation?)
%We need this to complete the picture..DONE. P.N.
Finally, we check the 
%CHANGED-MR sub ch...
subcharacteristic condition (S3) and consider the spatially homogeneous perturbations (independent of the space variable). The Whitham's equations read
\begin{equation}\label{whith_hom}
\displaystyle
\partial_T\kappa=0,\quad \partial_T \langle u\rangle=0,\quad \partial_T\langle\frac{u^2}{2}\rangle=\bar\delta\left(\langle(u')^2\rangle-\langle(u'')^2\rangle\right),
\end{equation}
\noindent
where $u$ is defined by \eqref{kdef}. In this setting, we use $k,M=\langle u\rangle$ and $\Delta=u_3-u_1$ as parameters. One thus has
$$
\displaystyle
\kappa=\frac{\pi}{K(k)}\sqrt{\frac{\Delta}{3}},\quad \langle\frac{u^2}{2}\rangle=\frac{M^2}{2}-\frac{1}{6}P(k)\Delta^2,
$$
\noindent
with $\displaystyle P(k)=1-k^2+4(k^2-2)E(k)/K(k)+12(E(k)/K(k))^2$. Next, one can show that the source term is written in a simpler form
$$
\displaystyle
R(k,M,\Delta)=\bar\delta\left(\langle(u')^2\rangle-\langle(u'')^2\rangle\right)=r(k,M,\Delta)\left(\bar\Delta(k)-\Delta\right),
$$
with $r(k,M,\Delta)>0$ and $\bar\Delta$ given by:
$$
\displaystyle
\bar\Delta(k)=\frac{21}{20}\frac{2(k^4-k^2+1)E(k)-(1-k^2)(2-k^2)K(k)}{(-2+3k^2+3k^4-2k^6)E(k)+(k^6+k^4-4k^2+2)K(k)}.
$$
The steady states of \eqref{whith_hom} correspond to $\Delta=\bar\Delta(k)$. By linearizing \eqref{whith_hom} about a steady state $(k_*,M_*,\Delta_*=\bar\Delta(k_*))$ and searching for solutions that grow in time exponentially, one finds the dispersion relation
$$
\displaystyle
\Lambda^2(\Lambda-\Lambda_*)=0,
$$
with $\Lambda_*$ satisfying
$$
\displaystyle
\left((\frac{P(k_*)K'(k_*)}{3K(k_*)}+\frac{P'(k_*)}{12})\Delta_*^2\right)\Lambda_*=r(k_*,M_*,\Delta_*)\left(\frac{\Delta_*K'(k_*)}{K(k_*)}-\frac{\bar\Delta'(k_*)}{2}\right).
$$
\noindent
The subcharacteristic condition (S3) is satisfied if and only if $\Lambda_*<0$. One clearly sees 
that this condition is independent of $M_*$. In Figure \ref{fig4}, we have represented 
$\lambda_*=\Lambda_*/r(k_*,M_*,\bar\Delta_*)$ as a function of the period $X$. We 
clearly see that the 
%CHANGED-MR sub ch...
subcharacteristic condition (S3) is always satisfied on the range of period $[2\pi, X_m]$ with $X_m\geq 30$.
%CHANGED-MJnew added for clarification.
In particular, $(S3)$ holds for all near-KdV wave trains with period $X\geq X_c$, corresponding to the low-frequency stability boundary, 
and $X\leq 26.17$, corresponding to the high-frequency stability boundary computed in \cite{BN}.
%ENDCHANGED

\begin{figure}[htbp]
\begin{center}
\includegraphics[scale=0.6]{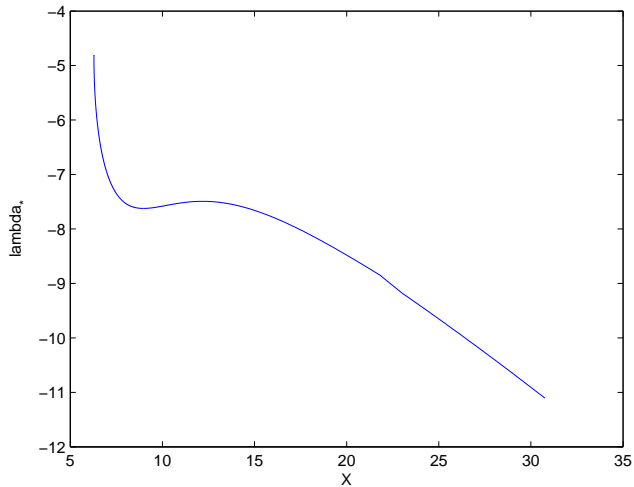}
%CHANGED-MJnew
%\caption{\label{fig4} Subcharacteristic condition S3: ${\lambda}_*$ as a function of the period $X$}
\caption{\label{fig4} Here, we plot $\lambda_*(X(k))$ as a function of the period $X(k)$ of the underling wave train.}
%ENDCHANGED
\end{center}
\end{figure}

\medskip
{\bf Acknowledgement.} Thanks to Blake Barker for
the aid of his numerical Evans function computations carried out
for $\delta\ll 1$, some but not all of which appear
in \cite{BJNRZ1}.

\appendix

\section{Order one eigenvalues: the computations of \cite{BN}}\label{s:bn}

In this appendix, we supplement the analytical results of Section \ref{s:evexpand} by
describing in our own notation the numerical computations carried out in \cite{BN}
determining the sign of the real part of the $O(\delta)$ corrector $\lambda_1(\xi)$
for a fixed $\xi\in[-\pi/X,\pi/X)$ for nonzero eigenvalues $\lambda_0$ for KdV.
To this end, recall that for a fixed $\xi\in[-\pi/X,\pi/X)$ the $L^\infty(\RM)$ eigenvalues
$\lambda_\delta(\xi)$ can be expanded for $0<\delta\ll 1$ as in \eqref{blochexpand}, where we recall
that for $\lambda_0(\xi)\neq 0$ 
%CHANGEDnewKZ:
%we have that 
the real part of the $O(\delta)$ corrector can be found
%determined 
%CHANGED-MJ2/28/12 Changed 2X to X throughout
%from \eqref{rl1}, assuming $\int_0^{2X}\hat{v}_{0,j}'\bar{\hat{v}}_{0,j}dx\neq 0$, as
%\begin{equation}\label{corr}
%\displaystyle
%\Re(\lambda_1)=
%\frac{
%\Im\left(\int_0^{2X}\hat{v}_{0,j}''\bar{\hat{v}}_{0,j}'-\hat{v}_{0,j}'''\bar{\hat{v}}_{0,j}''\right)}
%{ \Im\left(\int_0^{2X}\hat{v}_{0,j}'\bar{\hat{v}}_{0,j}\right) }=\frac{\langle \hat v_{0,j}'; \hat v_{0,j}''+\hat v_{0,j}''''\rangle }{\langle \hat v_{0,j}';\hat v_{0,j} \rangle};
%%\,
%\end{equation}
from \eqref{rl1}, assuming $\int_0^{X}\hat{v}_{0,j}'\bar{\hat{v}}_{0,j}dx\neq 0$, as
\begin{equation}\label{corr}
\displaystyle
\Re(\lambda_1)=
\frac{
\Im\left(\int_0^{X}\hat{v}_{0,j}''\bar{\hat{v}}_{0,j}'-\hat{v}_{0,j}'''\bar{\hat{v}}_{0,j}''\right)}
{ \Im\left(\int_0^{X}\hat{v}_{0,j}'\bar{\hat{v}}_{0,j}\right) }=\frac{\langle \hat v_{0,j}'; \hat v_{0,j}''+\hat v_{0,j}''''\rangle }{\langle \hat v_{0,j}';\hat v_{0,j} \rangle};
%\,
\end{equation}
%ENDCHANGED
notice that this is precisely formula (54) on page 593, with $\Phi_0=\hat{v}_{0,j}$, for the $O(\delta)$ 
correction 
%CHANGED-MR : of
of non-zero KdV eigenvalues $\lambda_0$ found in \cite{BN}. 
%CHANGED-MJ removed... seemed unnecessary
%Note that it is stated at top of p. 593 that  `` it can be shown that $\lambda_0$ 
%is an imaginary number'', which amounts to say that spectral stability of cnoidal 
%waves was known and proved by explicit computations before \cite{BD}. 
%CHANGED
Using Mathematica, the authors of \cite{BN} then numerically evaluate 
the quantity $\max_{\xi\in[-\pi/X,\pi/X)}\Re(\lambda_1(\xi))$,
which clearly must be non-positive to conclude stability.
The details of this calculation are as follows.  

First, denote 
$$
\displaystyle
\omega=\frac{\pi}{\kappa},\quad \omega'=\frac{K(\sqrt{1-k^2})\pi}{K(k)\kappa}.
$$
\noindent
Following the stability analysis for the KdV equation \eqref{kdv} presented in \cite{Sp}, 
the authors of \cite{BN} parameterize the eigenvalues and eigenfunctions
$\lambda_0$ and $\hat v_0$ and the Bloch wave number $\xi$ as 
%the eigenvalues and eigenfunctions $\lambda_0,\hat v_0$ and the Bloch number are parameterized as follows
%CHANGED-MJ formula for \lambda_0 off by negative sign
%\begin{equation}
%%\begin{array}{lll}
%\displaystyle
%\hat v_0(x)=\frac{\sigma^2(x+i\omega'+\alpha)}{\sigma^2(x+i\omega')\sigma^2(\alpha)}e^{-2(x+i\omega')\zeta(\alpha)},\quad
%\displaystyle
%\lambda_0=4\nu'(\alpha),\quad
%\displaystyle
%\xi=2i\left(\zeta(\alpha)-\frac{\alpha}{\omega}\zeta(\omega)\right),
%%\end{array}
%\end{equation}
\begin{equation}
%\begin{array}{lll}
\displaystyle
\hat v_0(x)=\frac{\sigma^2(x+i\omega'+\alpha)}{\sigma^2(x+i\omega')\sigma^2(\alpha)}e^{-2(x+i\omega')\zeta(\alpha)},\quad
\displaystyle
\lambda_0=-4\nu'(\alpha),\quad
\displaystyle
\xi=2i\left(\zeta(\alpha)-\frac{\alpha}{\omega}\zeta(\omega)\right),
%\end{array}
\end{equation}
%ENDCHANGED
\noindent
where here $\sigma$ and $\zeta$ denote Weierstrass's sigma- and zeta-functions, respectively, 
$\nu(z)$ denotes the  Weierstrass elliptic function with periods $\omega=\frac{\pi}{\kappa}$ and $i\omega'$
where $\omega'=\frac{K(\sqrt{1-k^2})\pi}{\kappa K(k)}$. 
Notice 
%then 
that $\xi\in\R$ only if $\Re(\alpha)=n\omega, n\in\mathbb{N}$. 
In this case, the problem is parameterized by $\alpha$ 
and $k$, since $\kappa$ is determined by the selection criterion $\kappa=\widetilde{\mathcal{G}}(k)$ 
given by formula (34) on page 590 in \cite{BN}.  In \cite{BN}, 
%then, 
the authors described the computations for $\alpha=n\omega+i\beta$ for $n=0,1$ and $\beta\in[0, 2\omega]$, 
claiming that the other cases $n\geq 2$ do not provide 
%CHANGED-MJ this not what is said in [BN]
%unstable modes.  
any new results.
%ENDCHANGED
Here, the parameter $k$ was restricted to
%CHANGED-MR 
%ie in 
the interval $[0, 1-10^{-7}]$, which corresponds 
to periods $X=\frac{2\pi}{\kappa}$ lying approximately in the interval $[2\pi,10\pi]$.  
In order to evaluate the Weierstrass elliptic functions,
the usual theta functions are used:
$$
\begin{array}{ll}
\displaystyle
\Theta(z)=2\sum_{n=1}^\infty(-1)^{n+1}q_0^{(2n+1)^4/4}\sin\left((2n-1)\pi z/2K(k)\right),\\
\displaystyle
\Theta_1(z)=2\sum_{n=1}^\infty(-1)^{n+1}q_0^{(2n+1)^4/4}\cos\left((2n-1)\pi z/2K(k)\right),
\end{array}
$$
\noindent
with $q_0=\exp(-\pi K(\sqrt{1-k^2})/K(k))$. Then the various Weierstrass functions 
%CHANGEDnewKZ (consistent tense)
%were 
are
represented as 
%CHANGED-MJ2/28/12 reformat
%$$
%\displaystyle
%\nu(z)=e_1+\lambda\left(\frac{\Theta_1(z\sqrt{\lambda})\Theta'(0)}{\Theta_1(0)\Theta(z\sqrt{\lambda})}\right)^2, \zeta(z)=\zeta(\omega)\frac{z}{\omega}+\sqrt{\lambda}\frac{\Theta'(z\sqrt{\lambda})}{\Theta(z\sqrt{\lambda})}, \sigma(z)=\frac{1}{\sqrt{\lambda}}\exp(\frac{\zeta(\omega)z^2}{2\omega})\frac{\Theta(z\sqrt{\lambda})}{\Theta'(0)},
%$$
\begin{align*}
\nu(z)&=e_1+\lambda\left(\frac{\Theta_1(z\sqrt{\lambda})\Theta'(0)}{\Theta_1(0)\Theta(z\sqrt{\lambda})}\right)^2,\\
\zeta(z)&=\zeta(\omega)\frac{z}{\omega}+\sqrt{\lambda}\frac{\Theta'(z\sqrt{\lambda})}{\Theta(z\sqrt{\lambda})},\\
\sigma(z)&=\frac{1}{\sqrt{\lambda}}\exp(\frac{\zeta(\omega)z^2}{2\omega})\frac{\Theta(z\sqrt{\lambda})}{\Theta'(0)},
\end{align*}
where $e_1=\nu(\omega), \lambda=\nu(\omega)-\nu(\omega+i\omega')$.

Using the above approach, it is found in \cite{BN} that the quantity $\max_{\xi\in[-\pi/X,\pi/X)}\Re(\lambda_1(\xi))$
is strictly negative for all KdV wave trains with periods in the interval $[8.49,26.17]$.
In particular, notice that from Figure \ref{fig2} the subcharacteristic conditions (S1)-(S3) hold
in this interval, as indicated in Section \ref{s:specevans}.
%(TODO: the figure still does not include the whole range of stability..-KZ)
%From Mat: it seems that Pascal fixed this...
Furthermore, the left stability boundary corresponds to $\xi\approx 0$, hence 
%CHANGED-MR
%corresponds 
to a sideband type instability;
%CHANGED-KZ, added:
as noted in the previous section, the right stability boundary does not.
For each $k$, and thus each period, the authors determine approximately the value $\xi_m$ where the functions $\lambda_1(k,\xi)$ 
take 
%SP
%there 
their
maximal values $\lambda_{1,m}(k)$. The points $\lambda_{1,m}(k)=0$ determine the boundaries 
of the stability region. 

As 
%CHANGED too many noted
% noted 
mentioned
throughout our analysis, it is important to note that the analysis of \cite{BN} 
%CHANGED-MR : too restritive
%is limited to the case 
a priori explores regions 
where the eigenvalues expand
as
$$
\displaystyle
\lambda(\delta,\xi)=\lambda_0(\xi)+\delta\lambda_1(\xi)+O(\delta^2),
$$
\noindent
%CHANGED-MR
and is 
thus limited only to some particular regions of the $(|\xi|,\delta)$ plane.  
%CHANGED-MR : still not completely satisified
In particular,  we stress that only the unveiling of the role of subcharacteristic conditions enables us to prove somewhat surprisingly that, though from the analysis of \cite{BN} it is not possible to conclude spectral stability, their numerical investigation is still sufficient to complete our analysis.

Finally, we note that another way of 
%CHANGEDnewKZ:
%doing
carrying out these computations would be to use instead the 
parameterization of eigenvalues and eigenvectors presented in \cite{BD}. In this case, one has
$$
\displaystyle
\lambda_0(\eta)=\pm 8i\sqrt{|\eta-\eta_1||\eta-\eta_2||\eta-\eta_3|},\quad \eta\in]-\infty,\eta_1]\cup[\eta_2, \eta_3],
$$
$$
\displaystyle
\xi=\frac{N\pi}{2K(k)}\pm\frac{8\sqrt{|\eta-\eta_1||\eta-\eta_2||\eta-\eta_3|}}{K(k)}\int_0^{K(k)}\frac{dy}{\eta-k^2+\dn(y,k)},
$$
%CHNAGED-MJ2/28/12 Changed 2X to X here
%$$
%\displaystyle
%\hat{v}_0(x)=\int_x^{x+2X(k)}(\lambda_0(\eta)-\frac{U_0'(y)}{3})\exp\left(-\int_0^y \frac{\lambda_0(\eta)dz}{U_0(z)/3-c_0+\eta}\right)dy,
%$$
$$
\displaystyle
\hat{v}_0(x)=\int_x^{x+X(k)}(\lambda_0(\eta)-\frac{U_0'(y)}{3})\exp\left(-\int_0^y \frac{\lambda_0(\eta)dz}{U_0(z)/3-c_0+\eta}\right)dy,
$$
%ENDCHANGED
\noindent
with $\eta_1=k^2-1$, $\eta_2=2k^2-1$, $\eta_3=k^2$ and $U_0$ the cnoidal wave given by setting $\kappa=\mathcal{G}(k)$
as defined in Proposition \ref{p:kdvsolnexpand}.

\end{document}